\newtheorem{thm}{Theorem}[section]
\newtheorem{prop}[thm]{Proposition}
\newtheorem{lem}[thm]{Lemma}
\newtheorem{cor}[thm]{Corollary}
\theoremstyle{definition}
\newtheorem{dfn}[thm]{Definition}
\theoremstyle{remark}
\newtheorem{oss}[thm]{Remark}
\theoremstyle{definition}
\newtheorem{ip}[thm]{Hypotesis}
\newcommand{\iprod}[2]{\left\langle#1,#2\right\rangle}
\newcommand{\re}[1]{\mbox{\mdseries{Range}}(#1)}
\newcommand{\norm}[1]{\left\lVert#1\right\rVert}
\newcommand{\abs}[1]{\left\lvert#1\right\rvert}
\newcommand{\Nat}{\mathbb{N}}
\newcommand{\R}{\mathbb{R}}
\newcommand{\al}{\alpha}
\newcommand{\be}{\beta}
\newcommand{\g}{\gamma}
\newcommand{\ep}{\varepsilon}
\newcommand{\si}{\sigma}
\newcommand{\ph}{\varphi}
\newcommand{\la}{\lambda}
\newcommand{\te}{\theta}
\newcommand{\La}{\Lambda}
\newcommand{\ca}[1]{\mathbbm{1}_{#1}}
\newcommand{\de}{\partial}
\newcommand{\sm}{\smallsetminus}
\newcommand{\op}{\mathcal{L}}
\newcommand{\Ker}{{\rm Ker}}
\newcommand{\Tr}[1]{{\rm Tr}\left(#1\right)}
\newcommand{\cm}{\mathcal{H}}
\newcommand{\Ng}{\mathcal{N}}
\newcommand{\ac}{\mathrm{H}}
\newcommand{\mts}{m^x(t,s)}
\newcommand{\uts}{U(t,s)}
\newcommand{\utz}{U(t,\sigma)}
\newcommand{\ets}{E}
\newcommand{\pst}{P_{s,t}}
\newcommand{\Lats}{\Lambda(t,s)}
\newcommand{\Qm}[1]{Q(#1)^{-\frac{1}{2}}}
\newcommand{\Qp}[1]{Q(#1)^{\frac{1}{2}}}
\newcommand{\Qum}{Q^{-\frac{1}{2}}}
\newcommand{\Qup}{Q^{\frac{1}{2}}}
\newcommand{\liph}[1]{{\rm Lip_E} (#1)}
\newcommand{\ix}[2]{\left\langle#1,#2\right\rangle_X}
\newcommand{\os}{\mathcal{O}}
\newcommand{\oa}{\mathcal{A}}
\newcommand{\id}{{\rm Id}}
\newcommand\rwhat[1]{%
\savestack{\tmpbox}{\stretchto{%
  \scaleto{%
    \scalerel*[\widthof{\ensuremath{#1}}]{\kern-.6pt\bigwedge\kern-.6pt}%
    {\rule[-\textheight/2]{1ex}{\textheight}}
  }{\textheight}%
}{0.5ex}}%
\stackon[1pt]{#1}{\tmpbox}%
}
\title{ON SMOOTHING IN NON AUTONOMOUS ORNSTEIN-UHLENBECK EQUATIONS IN INFINITE DIMENSIONS}
\author{\textit{Paolo De Fazio}\\University of Parma}
\date{}
\begin{document}

\maketitle

\begin{abstract}
We prove smoothing properties along suitable directions of the Ornstein-Uhlenbeck evolution operator, namely the evolution operator associated to non autonomous Ornstein-Uhlenbeck equations. Moreover we use such smoothing estimates to prove Schauder type theorems, again along suitable directions, for the mild solutions of a class of evolution equations.
\end{abstract}


\section[]{Introduction}

Let $(X, \ix{\cdot}{\cdot},\norm{\,\cdot\,}_X)$ be a separable Hilbert space, $T>0$ and $\Delta=\{(s,t)\in[0,T]^2\ |\ s<t\}$. We consider the mild solution 
\begin{equation}\label{solmild}
u(s,x):=\pst\ph(x)-\int_s^t\bigl(P_{s,\si}\psi(\si,\cdot)\bigr)(x)\,d\si,\ \ \ph\in C_b(X),\ \psi\in C_b\bigl([0,t]\times X\bigr)
\end{equation}
to a class of backward non autonomous initial value problems,
\begin{equation}\label{bwp}
\begin{cases}
\de_s u(s,x)+ L(s) u(s,x)=\psi(s,x),\ \ \ \ (s,t)\in\Delta,\  x\in X,\\
u(t,x)=\ph(x),\ \ x\in X,
\end{cases}
\end{equation}
where the operators $L(t)$ are of Ornstein-Uhlenbeck type
\begin{equation}
L(t)\ph(x)=\frac{1}{2}\mbox{\rm Tr}\Bigl(Q(t)D^2\ph(x)\Bigr)+\ix{A(t)x+f(t)}{\nabla\ph(x)},
\end{equation} 
the family $\{A(t)\}_{t\in[0,T]}$ generates a strongly continuous evolution operator and $Q(t)$ is a self-adjoint nonnegative operator for every $t\in[0,T]$.

For any $t\in[0,T]$ the evolution family $\{P_{s,t}\}_{s\in[0,t]}$ is defined by
\begin{align}
 P_{t,t}&=I\ \ \mbox{for any}\ \ t\in[0,T], \\
P_{s,t}\ph(x)&=\int_X\ph(y)\,\Ng_{m^x(t,s),Q(t,s)}(dy)\nonumber \\&=\int_X\ph(y+m^x(t,s))\,\Ng_{0,Q(t,s)}(dy), \ \ (s,t)\in\overline{\Delta},\ \ph\in C_b(X)
\end{align}
where we denote by $\Ng_{m,Q}$ the Gaussian measure in $X$ with mean $m$ and covariance $Q$. Here, for any $(s,t)\in\overline{\Delta}$ we have
\begin{align} \label{qtscov}
Q(t,s)&=\int_s^tU(t,r) Q(r) U(t,r)^\star\,dr,\\
m^x(t,s)&=U(t,s)x+g(t,s),\\
g(t,s)&=\int_s^tU(t,r) f(r)\,dr.
\end{align}
where $\{U(t,r)\}_{(t,r)\in\overline{\Delta}}$ is the strongly continuous evolution operator in $X$ associated to the family $\{A(t)\}_{t\in[0,T]}$. Under minimal assumptions, the mapping $$(s,\si,x)\in\overline{\Delta}\times X\longmapsto \bigl(P_{s,\si}\psi(\si,\cdot)\bigr)(x)\in\R$$ is measurable (e.g. \cite[Lemma\,2.3]{cerlun}). So the integral in \eqref{solmild} is well defined. In this infinite dimensional setting, we need that the operators $Q(t,s)$ defined by \eqref{qtscov} have finite trace.

Let $Y$ be a separable Hilbert space and $\{B(t)\}_{t\in[0,T]}\subseteq\op(Y;X)$. We consider the following forward  stochastic differential equation
\begin{equation}\label{stocpb}
\begin{cases}
dX_t(s,x)=\bigl(A(t)X_t(s,x)+f(t)\bigr)dt+B(t)dW_t,\ \ (s,t)\in\Delta,\\
X_s(s,x)=x\in X,
\end{cases}
\end{equation}
where $W_t$ is a $Y$-valued cylindrical Wiener process. If $\psi\equiv 0$ and $Q(t)=B(t)B(t)^\star$, \eqref{bwp} is the Kolmogorov equation formally associated to \eqref{stocpb}. Namely, it is the equation formally satisfied by $$u(s,x)=\mathbb{E}\bigl(\ph(X_t(s,x))\bigr),\ \ (t,s)\in\overline{\Delta}.$$
For a proof of this fact in the autonomous case see \cite{MR3236753}.

We are interested in smoothing properties of $\pst$ along some normed spaces $E$, continuously embedded in $X$. Fixed $(s,t)\in\Delta$, such properties will be proved  under the assumpion
\begin{equation}\label{16}
 U(t,s)(E)\subseteq\cm_{t,s}:=\Qp{t,s}(X)\ \mbox{and}\ U(t,s)_{|_E}\in\op(E,\cm_{t,s}). 
 \end{equation}
 In this case we define the operator $$\La(t,s)=\Qm{t,s} U(t,s),\ \ (s,t)\in\Delta,$$ where $\Qm{t,s}$ is the pseudo-inverse of $\Qp{t,s}$.

If \eqref{16} holds, we prove that for all $k\in\Nat$, $\pst$ maps $C_b(X)$ (the space of continuous and bounded functions from $X$ to $\R$, see Sect. 2) into $C^k_E(X)$ (the space of the functions from $X$ to $\R$ $k$-times Frèchet differentiable along $E$ having bounded Frèchet differentials along $E$ see Sect. 2). Moreover, we give an explicit formula for the Frèchet derivatives of $\pst$ of any order along $E$, that involves the operators $\La(t,s)$ and allows to estimate such derivatives. 

If $E$ is such that \eqref{16} holds for every $(s,t)\in\Delta$, in general $\norm{\La(t,s)}_{\op(E,X)}$ blows up as $t-s\rightarrow 0^+$ and we assume in addition that $\norm{\La(t,s)}_{\op(E,X)}$ has a powerlike bound, namely there exist $\theta, C>0$ such that
\begin{equation}\label{1.10}
\norm{\La(t,s)}_{\op(E,X)}\leq \frac{C}{(t-s)^\theta},\ \ (s,t)\in\Delta.
\end{equation}
In this case we prove H\"older maximal regularity of \eqref{solmild} along $E$. More precisely, if  $\al\in[0,1)$, $\displaystyle{\al+\frac{1}{\te}\notin\Nat}$, $\ph\in C^{\al+\frac{1}{\te}}_E(X)$ and $\psi\in C^{0,\al}_E\bigl([0,t]\times X\bigr)$, then $u(s,\cdot)\in C_E^{0,\al+\frac{1}{\te}}\bigl([0,t]\times X\bigr)$. Moreover there exists $C=C(T,\al)>0$, independent of $\ph$ and $\psi$, such that 
\begin{equation}
\norm{u}_{C_E^{0,\al+\frac{1}{\te}}([0,t]\times X)}\leq C\bigl(\norm{\ph}_{C^{\al+\frac{1}{\te}}_E(X)}+\norm{\psi}_{C_E^{0,\al}([0,t]\times X)}\bigr).
\end{equation}
If $\displaystyle{\al+\frac{1}{\te}}$ is an integer, $u(s,\cdot)$ only belongs to the Zygmund space $Z_E^{\al+\frac{1}{\te}}(X)$  for every $s$. For the definitions of the spaces $C_E^{\gamma}(X)$, $Z_E^{k}(X)$ and $C_E^{0,\gamma}([0,t]\times X)$ see Sect. 2. Zygmund regularity in place of $C^k$ regularity is not due to the infinite dimensional setting nor to the time dependence of the data. Indeed, we have the same result even in finite dimension for the heat equation $u_s+\Delta u=\psi$.

In the autonomous case, the first Schauder estimates for Ornstein-Uhlenbeck type equations were proven in \cite{MR1343161, MR1475774} in finite dimension and in \cite{MR1430142} in some infinite dimensional equations.  Maximal H\"older regularity along suitable directions was proved in \cite{MR4311102} for a general class of autonomous equations.

Non autonomous Ornstein Uhlenbeck equations in infinite dimension were studied in \cite{cerlun} under the assumption that $\uts(X)\subseteq \Qp{t,s}(X)$. In this case, $\pst$ is strong Feller, namely it maps maps $B_b(X)$ (the set of bounded Borel measurable functions) into $C_b(X)$.  In fact, it maps $B_b(X)$ into $C^k_b(X)$ for every $k\in\Nat$ and, assuming that \eqref{1.10} holds for $E=X$, maximal H\"older regularity for \eqref{solmild} holds along any direction. A more general class of non autonomous evolution families $\pst$ associated to stochastic differential equations with Levy noise are studied in \cite{MR3466574} again in the Strong Feller case.

It is well known that in infinite dimension the classical Ornstein-Uhlenbeck semigroup is smoothing only along suitable directions, see \cite{MR1642391}, as well as the heat semigroup generated by the Gross Laplacian, see \cite{MR1985790}.

Others authors looked for smoothing results along suitable directions for perturbations of autonomous Ornstein-Uhlenbeck semigroups. For further readings we refer to \cite{Bignamini2022, Bignamini2021, bigna22} and \cite{MR2299922}. 

In section 2 we study the smoothing properties of $\pst$. In particular we prove that, if \eqref{16} holds, $\pst$ maps $C_b(X)$ in $C^k_E(X)$ for every $k\in\Nat$ and moreover there exists a constant $C_k>0$ such that 
\begin{equation}\label{stimanormediff}
\sup_{x\in X}\norm{D_E^k(\pst\ph)(x)}_{\op^k(E)}\leq C_k\norm{\Lats}^k_{\op(E; X)}\norm{\ph}_\infty.
\end{equation}
In addition we extend to our non autonomous case a result of  \cite[Sect.\,3]{MR1976297}, giving sufficient conditions in order that $E=\Qp{s}(X)$ satisfies \eqref{16} for $s\in(0,t)$.

In section 3 we prove the above maximal H\"older regularity results.

Section 4 concerns four  genuinely non autonomous examples. In the first example $A(t)$ and $B(t)$ are diagonal operators with respect to the same Hilbert basis of $X$. This allows to give easily necessary and sufficient conditions for \eqref{16} and \eqref{1.10} to hold.

In the second example we consider $A(t)=a(t) I$, where $a$ is a continuous function. We get a non autonomous version of the Ornstein-Uhlenbeck semigroup used in the Malliavin calculus and we extend to such non autonomous case the results of \cite{MR4011050}.

In the third example the operator $A(t)$ is the realization of a second order elliptic differential operator in $X=L^2(\os)$ with Dirichlet, Neumann or Robin boundary conditions and smooth enough coefficients, $\os$ is a bounded open smooth subset of $\R^d$. 

In \cite{cerlun}, the authors proved Schauder type results for the Dirichlet problem, working in H\"older spaces with increments along the whole $X$ in which case $\theta\geq\frac{1}{2}$. In the present paper we show that we can gain more regularity along suitable subspaces of $X$. A special case is given by $d=1$, $A(t)$ is a second order elliptic differential operator with Dirichlet boundary conditions and $B(t)=\mbox{Id}$. Indeed choosing $E=(L^2(I),H^2(I)\cap H^{1}_0(I))_{\al,p} $ with $\al\in\bigl(0,\frac{1}{2}\bigr)$ and $p\in[1,+\infty)$, \eqref{16} holds and \eqref{1.10} is satisfied with $\theta=\frac{1}{2}-\al$. So $E$ is a suitable Besov space with possible boundary condition.
Since in the above mentioned Schauder theorems we gain $\frac{1}{\theta}$ degrees of regularity of \eqref{solmild} along $E$, we find different kinds of smoothing according to the chosen space. This phenomenon is similar to what happens in finite dimension for hypoelliptic Ornstein-Uhlenbeck operators in the autonomous case (see \cite{MR1475774}). 

Smoothing estimates such as \eqref{stimanormediff} are basic tools for several development of the theory besides H\"older maximal regularity. For instance we plan to use them in the study of hypercontractivity properties of $\pst$ in $L^p$ spaces with respect to suitable measures. 

\section[]{Notations and assumptions}

If $X$ and $Y$ are real Banach spaces we denote by $\op(X;Y)$ the space of bounded linear operators from $X$ to $Y$. If $X=Y$, we write $\op(X)$ instead of $\op(X;X)$ and if $Y=\R$ we simply write $X^\star$ instead of $\op(X;\R)$. For $k\geq 2$, $\op^{k}(X)$ is the space of the $k$-linear bounded operators $T: X^k\longrightarrow\R$ endowed with the norm $$\norm{T}_{\op^{k}(X)}=\sup\biggl\{\frac{\abs{T(x_1,...,x_k)}}{\norm{x_1}_X\cdot\cdot\cdot\norm{x_k}_X}:\ x_1,...,x_k\in X\sm\{0\}\biggr\}.$$By $B_b(X;Y)$ and $C_b(X;Y)$ we denote the space of bounded Borel functions from $X$ to $Y$ and the space of bounded and continuous functions from $X$ to $Y$, respectively. We endow them with the sup norm $$\norm{F}_\infty=\sup_{x\in X}\norm{F(x)}_Y.$$  If $Y=\R$, we simply write $B_b(X)$ and $C_b(X)$ instead of $B_b(X;\R)$ and $C_b(X;\R)$, respectively.
Let $(E,\norm{\, \cdot\, }_{E})$ be a normed space such that $E\subseteq X$ with continuous embedding. For $\al\in(0,1)$ we define the H\"older spaces along $E$ as $$C^\al_E(X;Y)=\biggl\{F\in C_b(X;Y):\ [F]_{C^\al_E(X;Y)}=\sup_{{\substack{x\in X,\\ h\in E\sm\{0\}}}}\frac{\norm{F(x+h)-F(x)}_Y}{\norm{h}_E^\al}<+\infty\biggr\},$$
$$\norm{F}_{C^\al_E(X;Y)}=\sup_{x\in X}\norm{F(x)}_{Y}+[F]_{C^\al_E(X:Y)},$$ 
and the Lipschitz space along $E$ as $$\liph{X;Y}=\biggl\{F\in C_b(X;Y):\ [F]_{\liph{X;Y}}=\sup_{\substack{x\in X,\\ h\in E\sm\{0\}}}\frac{\norm{F(x+h)-F(x)}_Y}{\norm{h}_E}<+\infty\biggr\},$$ $$\norm{F}_{\liph{X;Y}}=\sup_{x\in X} \norm{F(x)}_Y+[F]_{\liph{X;Y}}.$$
Again, if $Y=\R$ we write $C^\al_E(X)$ and $\liph{X}$ instead of $C^\al_E(X;\R)$ and $\liph{X;\R}$, respectively.
Moreover we say that a map $f:X\longrightarrow\R$ is $E$-G\^{a}teaux differentiable at $x\in X$ if there exists a bounded linear operator $l_x:E\longrightarrow\R$ such that for any $h\in E$ we have 
\begin{equation}
\lim_{t\rightarrow 0}\frac{f(x+th)-f(x)-l_x(h)}{t}=0.
\end{equation}
$l_x$ is the $E$-G\^{a}teaux differential of $f$ at $x$ and we set $l_x=D^G_Ef(x)$. 
We say that a map $f:X\longrightarrow\R$ is $E$-Fréchet differentiable at $x\in X$ if there exists a bounded linear operator $t_x:E\longrightarrow\R$ such that 
\begin{equation}
\lim_{\norm{h}_{E}\rightarrow 0}\frac{f(x+h)-f(x)-t_x(h)}{\norm{h}_{E}}=0.
\end{equation}
$t_x$ is the $E$-Fréchet differential of $f$ at $x$ and we set $t_x=D_Ef(x)$. Clearly, if $f$ is Fréchet differentiable at $x$ then $f$ is $E$-Fréchet differentiable at $x$ and this is due to the continuous embedding of $E$ in $X$. 
More generally, for any $F:X\longrightarrow Y$, we say that $F$ is $E$-G\^{a}teaux differentiable at $x$ if there exists $L_x\in\op(E,Y)$ such that for any $h\in E$, we have
\begin{equation}
Y-\lim_{t\rightarrow 0}\frac{F(x+th)-F(x)-L_x(h)}{t}=0.
\end{equation}
$L_x$ is the $E$-G\^{a}teaux differential of $F$ at $x$ and we denote it by $D^G_E F(x)$. We say that $F$ is $E$-Fréchet differentiable at $x$ if there exists $T_x\in\op(E,Y)$ such that
\begin{equation}\label{yfrechet}
Y-\lim_{\norm{h}_{E}\rightarrow 0}\frac{F(x+h)-F(x)-T_x(h)}{\norm{h}_{E}}=0.
\end{equation}
$T_x$ is the $E$- Fréchet differential of $F$ at $x$ and we denote it by $D_E F(x)$. Clearly, if $E=X$ the notions of $E$-Fréchet differentiability and $E$-G\^{a}teaux differentiability coincide with the usual ones. In this case we omit the subindex $E$ in the notations above. For instance, we write $D F$ instead of $D_E F$ for the Fréchet derivative.

If $f:X\longrightarrow \R$ is $E$-Fréchet differentiable at $x$, we say that $f$ is twice $E$-Fréchet differentiable at $x$ if $D_E f:X\longrightarrow E^\star$ is $E$-Fréchet differentiable at $x$. 
 Hence, the second order differential $D^2_E f\in\op^{2}(E)$ of $f$ is defined  by $$D^2_E f(x)(k,h):=(T_x k)(h),$$ where $T_x$ is the operator in \eqref{yfrechet} with $F(x)$ replaced by $D_E f$ and $Y=E^\star$.
 
If $f$ is $(k-1)$-times $E$-Fréchet differentiable at $x$ with $k\geq 2$, we say that $f$ is $k$-times $E$-Fréchet differentiable at $x$ if $D^{k-1}_E f: X\longrightarrow \op^{k-1}(E)$ is $E$-Fréchet differentiable at $x$.  In this case $D^k_E f\in\op^k(X)$ is defined as $$D_E^k f(x)(h_1,...,h_k):=(T_xh_1)(h_2,...,h_k)$$ where $T_x$ is the operator in \eqref{yfrechet} with $F(x)$ replaced by $D_E^{k-1}f(x)$ and $Y=\op^{k-1}(E)$. 

For any $k\in \Nat$, $C^k_{E}(X)$ is the subspace of $C_b(X)$ consisting of all functions $f:X\longrightarrow \R$ $k$-times $E$-Fréchet differentiable at any point with $D_E^j f$ continuous and bounded from $X$ to $\op^{j}(E)$ for $j\leq k$. $C^k_E(X)$ is endowed with the norm 
\begin{equation}
\norm{f}_{C^k_E(X)}:= \norm{F}_\infty+\sum_{j=1}^k\sup_{x\in X}\norm{D_E^j f(x)}_{\op^{j}(E)}.
\end{equation}

$Z^1_E(X;Y)$ is the Zygmund space along $E$. It  is defined by 
\begin{align}
Z^1_E(X;Y)=&\biggl\{F\in C_b(X;Y)\ :\ \nonumber\\
&[F]_{Z^1_E(X;Y)}=\sup_{\substack{x\in X,\\ h\in E\sm\{0\}}}\frac{\norm{F(x+2h)-2F(x+h)+F(x)}_Y}{\norm{h}_E}<+\infty\biggr\},\nonumber
\end{align} and it is endowed with the norm $$\norm{F}_{Z^1_E(X;Y)}=\norm{F}_\infty+[F]_{Z^1_E(X;Y)}.$$
Higher order H\"older and Zygmund spaces along $E$ are defined as follows. For $\al\in(0,1)$ and $n\in\Nat$, we set $$C^{n+\al}_E(X):=\biggl\{f\in C^n_E(X)\ :\ D^n_E f\in C^\al_E(X;\op^n(E))\biggr\},$$ $$\norm{f}_{C^{n+\al}_E(X)}:=\norm{f}_{C^{n}_E(X)}+[D^n_E f]_{C^\al_E(X;\op^n(E))}$$ and for $n\geq 2$, $$Z^n_E(X)=\biggl\{f\in C^{n-1}_E(X)\ :\ D^{n-1}_E f\in Z^1_E(X;\op^n(E))\biggr\},$$ $$\norm{f}_{Z^n_E(X)}:=\norm{f}_{C^{n-1}_E(X)}+[D^{n-1}_E F]_{Z^1_E(X;\op^{n-1}(E))}.$$
Clearly if $E=X$ the functional spaces above coincide with the usual ones. 

Finally we introduce spaces of functions depending both on time and space variables. For every $a,b\in\R$, $a<b$ and $\al>0$, we denote by $C^{0,\al}_E\bigl([a,b]\times X\bigr)$ the space of all bounded continuous functions $\psi:[a,b]\times X\longrightarrow \R$ such that $\psi(s,\cdot)\in C^\al_E(X)$, for every $s\in [a,b]$, with $$\norm{\psi}_{C^\al_E([a,b]\times X)}=\sup_{s\in[a,b]}\norm{\psi(s,\cdot)}_{C^\al_E(X)}<+\infty.$$
If $\al\geq 1$ we also require that the mappings $$(s,x)\longrightarrow \frac{\de\psi}{\de h_1...\de h_k}(s,x)$$ are continuous in $[a,b]\times X$, for every $h_1,...,h_k\in E$ with $k\leq [\al]$.
Now, for every $k\in\Nat$ we denote by $Z^{0,k}_E\bigl([a,b]\times X\bigr)$ the space of all bounded continuous functions $\psi:[a,b]\times X\longrightarrow \R$ such that $\psi(s,\cdot)\in Z^k_E(X)$, for every $s\in [a,b]$, with $$\norm{\psi}_{Z^{0,k}_E([a,b]\times X)}=\sup_{s\in[a,b]}\norm{\psi(s,\cdot)}_{Z^k_E(X)}<+\infty.$$
If $k\geq 2$, we also require that the mapping $$(s,x)\longrightarrow \frac{\de\psi}{\de h_1...\de h_i}(s,x)$$ are continuous in $[a,b]\times X$, for every $h_1,...,h_i\in E$ with $i\leq k-1$.

Let $(X,\norm{\cdot}_X,\ix{\cdot}{\cdot})$ is a separable Hilbert space. We say that $Q\in\mathcal{L}(X)$ is \emph{non-negative} (\emph{positive}) if for every $x\in X\sm\{0\}$
\[
\langle Qx,x\rangle_X\geq 0\ (>0).
\]
On the other hand, $Q \in \mathcal{L}(X)$ is a \emph{non-positive} (respectively \emph{negative}) operator if $-Q$ is non-negative (respectively positive). Let $Q\in\mathcal{L}(X)$ be a non-negative and self-adjoint operator. We say that $Q$ is a trace class operator if
\begin{align}\label{trace_defn}
\Tr{Q}:=\sum_{n=1}^{+\infty}\langle Qe_n,e_n\rangle_X<+\infty,
\end{align}
for some (and hence for all) orthonormal basis $\{e_n\}_{n\in\Nat}$ of $X$. We recall that the trace operator, defined in \eqref{trace_defn}, is independent of the choice of the orthonormal basis.

Let $\Delta=\{(s,t)\in[0,T]^2\ |\ s<t\}$ and let $X, Y$ be two separable Hilbert spaces. Our basic assumptions are the following.
\begin{ip}\label{1}
\leavevmode
\begin{enumerate} 
\item[(1)] $\{U(t,s)\}_{(s,t)\in\overline{\Delta}}\subseteq\op(X)$ is a strongly continuous evolution operator, namely for every $x\in X$ the map
\begin{equation}
(s,t)\in\overline{\Delta}\longmapsto U(t,s)x\in X,
\end{equation}
is continuous and
\begin{enumerate}
\item $U(t,t)=I$ for any $t\in[0,T]$,
\item $U(t,r)U(r,s)=U(t,s)$ for $0\leq s\leq r\leq t\leq T$.
\end{enumerate}
\item[(2)] The family of operators $\{B(t)\}_{t\in[0,T]}\subseteq\op(Y;X)$ is bounded and strongly measurable, namely
\begin{enumerate}
\item there exists $K>0$ such that 
\begin{equation}\label{bshyp}
\sup_{t\in[0,T]}\norm{B(t)}_{\op(Y;X)}\leq K,
\end{equation}
\item the map 
\begin{equation}
t\in[0,T]\mapsto B(t)x\in X
\end{equation}
is measurable for any $x\in X$.
\end{enumerate}
\item[(3)] The map $f:[0,T]\longrightarrow X$ is bounded and measurable.
\item[(4)] Setting $Q(t)=B(t)B(t)^\star$ for all $t\in [0,T]$. The trace of the operator $Q(t,s)$, defined in \eqref{qtscov}, is finite for every $0\leq s <t\leq T$.
\end{enumerate}
\end{ip}
By the Uniform Boundedness Theorem, there exists $N>0$ such that 
\begin{equation}
\norm{U(t,s)}_{\op(X)}\leq N,\ \ \ (s,t)\in\overline{\Delta}.
\end{equation}
 
We recall the main properties of the Cameron Martin spaces in the Hilbert setting.
\begin{prop}\label{teoria}
Let $\mu=\Ng_ {0,Q}$ be the Gaussian measure with mean $m$ and covariance $Q$. The Cameron-Martin space $\cm$ of $\mu$ is the subset $Q^{\frac{1}{2}}(X)$ endowed with the scalar product $$\iprod{h}{k}_{\cm}=\ix{Q^{-\frac{1}{2}} h}{Q^{-\frac{1}{2}} k},\ \ \ h,k\in Q^{\frac{1}{2}}(X),$$
where $\Qum$ is the pseudo-inverse of $Q$. 

For all $h\in \cm$, there exists $\rwhat{h}\in L^p(X,\mu)$, for every $p\in[1,\infty)$  with 
\begin{equation}\label{stimahcap}
\norm{\,y\longmapsto\rwhat{h}(y)\,}_{L^p(X,\mu)}\leq c_p \norm{h}_{Q^{\frac{1}{2}}(X)}
\end{equation} such that the Cameron-Martin formula $$\Ng_{h,Q}(dy)=\exp\biggl(-\frac{1}{2}\norm{h}^2_{Q^{\frac{1}{2}}(X)}+\rwhat{h}(y)\biggr)\Ng_{0,Q}(dy)$$ holds. 

Let $(e_k)_{k\in\Nat}\subseteq X$ be an orthonormal basis of $X$ consisting of eigenvectors of $Q$, $Qe_k=\la_ke_k$ for all $k\in\Nat$. We have 

\begin{equation}\label{cappucci}
\rwhat{h}(y)=\sum_{\substack{k\in\Nat,\\ \la_k\neq0}}y_k \Bigl(Q^{-\frac{1}{2}}h\Bigr)_k\la_k^{-\frac{1}{2}}=\sum_{\substack{k\in\Nat,\\ \la_k\neq0}}y_k\, h_k\,\la_k^{-1}
\end{equation}
where $y_k=\ix{y}{e_k}$ for any $y\in X$. We remark that the series defined in \eqref{cappucci} converges in $L^p(X,\Ng_{0,Q})$ for any $p\in[1,\infty)$ and it does not converges pointwise if and only if $h=0$. 
\end{prop}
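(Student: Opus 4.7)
The plan is to establish the three claims of the proposition in order: the identification of $\cm$, the construction of the Paley--Wiener functional $\rwhat{h}$ with the $L^p$ bound \eqref{stimahcap}, and the Cameron--Martin formula itself. The starting point is the spectral decomposition: since $Q$ is nonnegative, self-adjoint and trace class, the stated orthonormal basis $(e_k)_{k\in\Nat}$ of eigenvectors exists, with $\la_k\geq 0$ and $\sum_k\la_k<+\infty$. Defining $\Qum$ to vanish on $\Ker Q$ and to send $e_k\mapsto \la_k^{-1/2}e_k$ when $\la_k\neq 0$, the map $\Qup:(\Ker Q)^\perp\to \cm$ is an isometric isomorphism when $\cm$ is endowed with the scalar product in the statement, so the first assertion follows at once.

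For the construction of $\rwhat{h}$ I would take the $L^2(X,\mu)$--limit of the truncations
\[
\rwhat{h}_N(y)=\sum_{\substack{k\leq N,\\ \la_k\neq 0}} y_k\,h_k\,\la_k^{-1},\qquad y\in X.
\]
Under $\mu$ the coordinates $y_k=\ix{y}{e_k}$ are independent centred Gaussians with variance $\la_k$, so by independence
\[
\|\rwhat{h}_N-\rwhat{h}_M\|_{L^2(X,\mu)}^2=\sum_{\substack{N<k\leq M,\\ \la_k\neq 0}}\frac{h_k^2}{\la_k},
\]
which is the tail of the convergent series $\|\Qum h\|_X^2=\|h\|_\cm^2$. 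Hence $(\rwhat{h}_N)$ is Cauchy in $L^2(X,\mu)$ and its limit $\rwhat{h}$ is a centred Gaussian of variance $\|h\|_\cm^2$; the Gaussian $L^p$--$L^2$ equivalence of moments then produces \eqref{stimahcap}.

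For the Cameron--Martin formula I would first verify the finite-dimensional counterpart. Let $P_N$ be the orthogonal projection onto $\mathrm{span}\{e_1,\dots,e_N\}$; since $h\in\cm$ one has $h_k=0$ whenever $\la_k=0$, so on the non-degenerate part of $P_NQP_N$ a direct computation with Gaussian densities gives
\[
\Ng_{P_Nh,P_NQP_N}(dy)=\exp\!\Bigl(-\tfrac{1}{2}\!\!\sum_{\substack{k\leq N,\\ \la_k\neq 0}}\!\frac{h_k^2}{\la_k}+\rwhat{h}_N(y)\Bigr)\Ng_{0,P_NQP_N}(dy).
\]
Testing this identity against cylindrical functions based on $P_N$ and letting $N\to+\infty$ would then deliver the formula on all of $X$; the passage to the limit combines the $L^2$--convergence $\rwhat{h}_N\to \rwhat{h}$ with the uniform integrability of $\exp(\rwhat{h}_N)$ supplied by the Gaussian moment bounds.

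Finally, the explicit series \eqref{cappucci} is built into the very definition of $\rwhat{h}$ through its truncations, after noting that $(\Qum h)_k=h_k\la_k^{-1/2}$ when $\la_k\neq 0$. The non-pointwise assertion follows by observing that for $h\neq 0$ the sequence $(h_k\la_k^{-1})$ need not belong to $\ell^2$ (whereas $(h_k\la_k^{-1/2})$ does, by definition of $\cm$), so one can exhibit $y\in X$ along which the partial sums diverge; convergence holds only $\mu$--almost everywhere, consistently with the $L^2$ construction. The main technical obstacle is precisely this limiting procedure for the Cameron--Martin formula: since $\rwhat{h}_N\to\rwhat{h}$ only in $L^p$ and not pointwise, controlling the exponentials requires exactly the uniform integrability afforded by Gaussian hypercontractivity, which is the same tool already used to prove \eqref{stimahcap}.
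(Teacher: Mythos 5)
The paper states this proposition without proof: it is recalled as standard background on Gaussian measures in Hilbert spaces (the Cameron--Martin theorem), so there is no internal argument to compare against. Your proof is the standard one and is essentially correct: the spectral decomposition of $Q$, the $L^2(X,\mu)$-Cauchy property of the truncations via independence of the coordinates $y_k$ and the identity $\|\rwhat{h}_N-\rwhat{h}_M\|_{L^2}^2=\sum_{N<k\le M}h_k^2/\la_k$, the Gaussian equivalence of moments for \eqref{stimahcap}, and the finite-dimensional density computation followed by a limit over cylindrical functions, using that $\exp(\rwhat{h}_N)$ is bounded in every $L^q$ because $\rwhat{h}_N$ is Gaussian with variance at most $\norm{h}_{\cm}^2$ (a slightly cleaner route for the limit is to note that $\rwhat{h}-\rwhat{h}_N$ is independent of the first $N$ coordinates, so the two sides already agree on cylindrical test functions for each $N$). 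The only caveat concerns the final remark: as you correctly observe, $(h_k\la_k^{-1})$ need not lie outside $\ell^2$ when $h\neq 0$ (take $h=Qe_1$, for which the series is just $y_1$ and converges everywhere), so the paper's assertion that the series ``does not converge pointwise if and only if $h=0$'' is not literally correct as stated; your argument proves everywhere-divergence at some $y$ only when $(h_k\la_k^{-1})\notin\ell^2$, which is the honest version of that remark, together with $\mu$-a.e.\ convergence coming from the $L^2$ martingale structure.
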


\begin{dfn}
In our non autonomous setting, for every $(t,s)\in\Delta$ we denote by $\cm_{t,s}$ the Cameron-Martin space of the measure $\Ng_{0,Q(t,s)}$.
Moreover, for every $t\in[0,T]$ we denote by $\mathrm{H}_t$ the space $\Qp{t}(X)$ endowed with the scalar product $$\iprod{h}{k}_{\mathrm{H}_t}=\ix{\Qm{t}h}{\Qm{t}k},\ \ \ h,k\in \ac_t,$$
where $\Qm{t}$ is the pseudo-inverse of $\Qp{t}$.
\end{dfn}

\section[]{Smoothing properties of $\pst$}

If Hypothesis \ref{1} holds, the evolution family  $$P_{s,t}\ph(x)=\int_X\ph(y+m^x(t,s))\,\Ng_{0,Q(t,s)}(dy),\ \ \ x\in X,\ (s,t)\in \Delta,\ \ph\in C_b(X),$$ is well defined. 
In \cite{cerlun} it was proved that $P_{s,t}$ maps $C^1_b(X)$ into itself, and $$\nabla(P_{s,t}\ph)(x)=\uts^\star P_{s,t}\nabla\ph(x),\ \ (s,t)\in\Delta,\ x\in X,\ \ph\in C^1_b(X),$$ so that $$\sup_{x\in X}\norm{D(P_{s,t}\ph)(x)}_{X^\star}\leq \norm{\uts}_{\op(X)}\norm{D\ph}_\infty\leq M\norm{\ph}_{C^1_b(X)},\ \ (s,t)\in\Delta,\ \ph\in C^1_b(X).$$
More generally it was proved that $\pst$ maps $C^k_b(X)$ into itself for every $k\in\Nat$.
Here we extend this results working along the directions of a suitable subspace $E$. 

\begin{lem}\label{regck} We assume that Hypothesis \ref{1} holds and that there exists a normed space $(\ets,\norm{\, \cdot\, }_{\ets})$ continuously embedded in $X$ such that for some $(s,t)\in\Delta$ 
\begin{equation}
\label{17} U(t,s)(\ets)\subseteq\ets\,\ \ U(t,s)_{|_{\ets}}\in\op(\ets)\ \mbox{and}\ \exists\ M>0\  \mbox{such that}\ \norm{U(t,s)_{|_{\ets}}}_{\op(\ets)}\leq M.
\end{equation}
Then $P_{s,t}$ maps $C^k_{\ets}(X)$ into itself for every $k\in\Nat$ and 
\begin{equation}\label{dk}
D^k_{\ets}(\pst\ph)(x)(h_1,...,h_k)=\pst \bigl(D_{\ets}^k\ph(\cdot)(\uts h_1,...,\uts h_k)\bigr)(x),
\end{equation}
for any $x\in X$ and $h_1,...,\ h_k\in \ets$. In particular for every $\ph\in C^k_b(X)$ 
\begin{align}
\abs{D^k_{\ets}\pst\ph(x)(h_1,...,h_k)}&\leq\pst\Bigl(\abs{D^k_{\ets}\ph(\cdot)(\uts h_1,...,\uts h_k)}\Bigr), \\
\label{stimadk}\sup_{x\in X}\norm{D_{\ets}^k(P_{s,t}\ph)}_{\op^{k}(\ets)}&\leq \norm{\uts}^k_{\op(\ets)}\sup_{x\in X}\norm{D^k_{\ets}\ph}_{\op^k(\ets)}\leq M^k\sup_{x\in X}\norm{D^k_{\ets}\ph}_{\op^k(\ets)}.
\end{align}

If $\ph\in C^\al_{\ets}(X)$, where $\al=k+\si$, $k\in\Nat\cup\{0\}$ and $\si\in(0,1)$, then $\pst\ph\in C^\al_{\ets}(X)$ and
\begin{equation}\label{sale}
[D_{\ets}^k(\pst\ph)]_{C^\si_{\ets}(X)}\leq\norm{\uts}^{k}_{\op(\ets)}[D^k_{\ets}\ph]_{C^\si_{\ets}(X)}\leq M^{k}[D^k\ph]_{C^\si_{\ets}(X)}.
\end{equation}
If $\ph\in Z^k_{\ets}(X)$, where $k\in\Nat$, then $\pst\ph\in Z^k_{\ets}(X)$ and
\begin{equation}\label{pepe}
[D_{\ets}^{k-1}(\pst\ph)]_{Z^k_{\ets}(X)}\leq\norm{\uts}^{k-1}_{\op(\ets)}[D_{\ets}^k\ph]_{Z^k_{\ets}(X)}\leq M^{k-1}[D_{\ets}^k\ph]_{Z^k_{\ets}(X)}.
\end{equation}
\end{lem}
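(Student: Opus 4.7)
The plan is to prove \eqref{dk} by induction on $k$, differentiating under the Gaussian integral via dominated convergence; the bounds \eqref{stimadk}, \eqref{sale}, and \eqref{pepe} then follow by estimating the integrand directly through \eqref{17} together with the regularity assumed on $\ph$.

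For the base case $k=1$ I would fix $\ph \in C^1_{\ets}(X)$, $x \in X$, $h \in \ets$, and set $\eta := \uts h \in \ets$, so that $\norm{\eta}_{\ets} \le M\norm{h}_{\ets}$ by \eqref{17}. Using the fundamental theorem of calculus along $\ets$,
\[
\ph(y+\mts+\eta) - \ph(y+\mts) = \int_0^1 D_{\ets}\ph\bigl(y+\mts+\tau\eta\bigr)(\eta)\,d\tau,
\]
integrating against $\Ng_{0,Q(t,s)}(dy)$, subtracting $\pst\bigl(D_{\ets}\ph(\cdot)(\eta)\bigr)(x)$, and dividing by $\norm{h}_{\ets}$, one obtains an integrand pointwise dominated by $M\sup_{\tau\in[0,1]}\norm{D_{\ets}\ph(y+\mts+\tau\eta)-D_{\ets}\ph(y+\mts)}_{\ets^\star}$. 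This quantity is globally bounded by $2M\norm{D_{\ets}\ph}_\infty$, and as $\norm{h}_{\ets}\to 0$ converges pointwise in $y$ to $0$ by continuity of $D_{\ets}\ph:X\to \ets^\star$. Dominated convergence yields \eqref{dk} for $k=1$; a second application, exploiting continuity of $x \mapsto \mts$, gives continuity of $D_{\ets}(\pst\ph):X\to \ets^\star$, so $\pst\ph\in C^1_{\ets}(X)$. The inductive step is then straightforward: assuming \eqref{dk} at order $k-1$ for every function in $C^{k-1}_{\ets}(X)$ and taking $\ph \in C^k_{\ets}(X)$, for each fixed $h_2,\ldots,h_k \in \ets$ the map $y\mapsto D^{k-1}_{\ets}\ph(y)(\uts h_2,\ldots,\uts h_k)$ belongs to $C^1_{\ets}(X)$ by \eqref{17}, and combining the base case with the inductive identity produces \eqref{dk} at order $k$. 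Bound \eqref{stimadk} is then immediate from \eqref{dk} and $\norm{\uts}_{\op(\ets)}\le M$.

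For the H\"older estimate \eqref{sale} I would first handle $\al=\si\in(0,1)$ (i.e.\ $k=0$) by starting from the translation identity
\[
\pst\ph(x+h) - \pst\ph(x) = \int_X\bigl[\ph(y+\mts+\uts h) - \ph(y+\mts)\bigr]\,\Ng_{0,Q(t,s)}(dy)
\]
and bounding the integrand by $[\ph]_{C^\si_{\ets}(X)}\norm{\uts h}_{\ets}^\si \le M^\si[\ph]_{C^\si_{\ets}(X)}\norm{h}_{\ets}^\si$. For $\al=k+\si$ with $k\ge 1$, identity \eqref{dk} reduces the claim to exactly this estimate applied to $D^k_{\ets}\ph$. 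The Zygmund estimate \eqref{pepe} is the analogous second-difference version: for $k=1$ the second increment of $\pst\ph$ equals the Gaussian integral of the corresponding second increment of $\ph$ shifted by $\uts h$, which is pointwise bounded by $[\ph]_{Z^1_{\ets}(X)}\norm{\uts h}_{\ets}\le M[\ph]_{Z^1_{\ets}(X)}\norm{h}_{\ets}$; for $k\ge 2$ I would invoke \eqref{dk} at order $k-1$ and repeat the second-difference argument on $D^{k-1}_{\ets}\ph$.

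The main technical delicacy is the passage from $\ets$-G\^ateaux to $\ets$-Fr\'echet differentiability in the base case: uniformity in the integration variable $y$ of the vanishing of the difference quotient is exactly what dominated convergence supplies, and this hinges essentially on the operator bound in \eqref{17}, which lets one control $\norm{\uts h}_{\ets}$ linearly by $\norm{h}_{\ets}$ and thereby turn the pointwise continuity of $D_{\ets}\ph$ into a uniform-in-$y$ vanishing estimate.
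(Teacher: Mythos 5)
Your proposal is correct and follows the same overall strategy as the paper: induction on $k$, the shift identity $m^{x+h}(t,s)=m^x(t,s)+\uts h$, differentiation under the Gaussian integral via dominated convergence, and then reading off \eqref{stimadk}, \eqref{sale}, \eqref{pepe} from \eqref{dk} together with $\norm{\uts h}_{\ets}\leq M\norm{h}_{\ets}$. The one genuine methodological difference is in the base case: the paper forms the directional difference quotient $\bigl(\pst\ph(x+\ep h)-\pst\ph(x)\bigr)/\ep$ and lets $\ep\to 0$, which as written produces the $\ets$-G\^{a}teaux derivative and leaves the upgrade to $\ets$-Fr\'echet differentiability implicit (it is recovered from the continuity and boundedness of the resulting derivative), whereas you write the full remainder via the fundamental theorem of calculus along $\ets$, dominate it by $M\sup_{\tau\in[0,1]}\norm{D_{\ets}\ph(y+\mts+\tau\uts h)-D_{\ets}\ph(y+\mts)}_{\ets^\star}$, and let $\norm{h}_{\ets}\to 0$; this yields Fr\'echet differentiability along $\ets$ directly and is the cleaner treatment of exactly the delicacy you flag at the end. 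One small point to tighten in the inductive step: as phrased you obtain \eqref{dk} at order $k$ for each \emph{fixed} $(h_2,\dots,h_k)$, i.e.\ differentiability of the scalar maps $x\mapsto D^{k-1}_{\ets}(\pst\ph)(x)(h_2,\dots,h_k)$, while membership in $C^k_{\ets}(X)$ requires Fr\'echet differentiability of the $\op^{k-1}(\ets)$-valued map; your FTC remainder bound is in fact uniform over $h_2,\dots,h_k$ in the unit ball of $\ets$ (it is controlled by the $\op^{k}(\ets)$-norm modulus of continuity of $D^{k}_{\ets}\ph$ times $M^{k}\prod_i\norm{h_i}_{\ets}$), so the argument does close, but this should be said explicitly — the paper's own proof is no more careful on this point.
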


\begin{oss} By abuse of language, we will often write $\uts$ instead of $\uts_{|_{\ets}}$.
\end{oss}

\begin{proof} 
We start proving \eqref{dk} and \eqref{stimadk} by recurrence over $k$. If $k=1$, let $x\in X$, $h\in \ets$ and $\ep>0$, we have
\begin{align}
\frac{\pst\ph(x+\ep h)-\pst\ph(x)}{\ep}\leq\int_X\frac{\abs{\ph\bigl(y+m^{x+\ep h}(t,s)\bigl)-\ph\bigl(y+m^x(t,s)\bigr)}}{\ep}\,\Ng_{0,Q(t,s)}(dy).
\end{align}
Since {\small $$\frac{\abs{\ph\bigl(y+m^{x+\ep h}(t,s)\bigr)-\ph\bigl(y+m^x(t,s)\bigr)}}{\ep}\leq \norm{D_{\ets}\ph}_\infty \norm{U(t,s)h}_{\ets}\leq\norm{D_{\ets}\ph}_\infty \norm{U(t,s)}_{\op(\ets)}\norm{h}_{\ets},$$} by the Dominated Convergence Theorem, as $\ep\rightarrow 0^+$, we get
\begin{equation}
D_{\ets}\pst\ph(x)(h)=\pst D_{\ets}\ph(\cdot)(U(t,s)h).
\end{equation}
Moreover
\begin{align}
\abs{D_{\ets}\pst\ph(x)(h)}&\leq \int_X \abs{D_{\ets}\ph(y+m^x(t,s))(U(t,s)h)}\, \Ng_{0,Q(t,s)}(dy)\nonumber\\
&\leq\sup_{x\in X}\norm{D_{\ets}\ph}_{\ets^\star}\norm{U(t,s)}_{\op(\ets)}\norm{h}_{\ets},\nonumber
\end{align}
and
\begin{align}
\norm{D_{\ets}\pst\ph(x)}_{\ets^\star}\leq \norm{D_{\ets}\ph}_{\infty}\norm{U(t,s)}_{\op(\ets)}.
\end{align}
Hence
\begin{align}
\sup_{x\in X}\norm{D_{\ets}\pst\ph}_{\ets^\star}\leq \sup_{x\in X}\norm{D_{\ets}\ph}_{\ets^\star}\norm{U(t,s)}_{\op(\ets)}\leq M \norm{\ph}_{C_{\ets}^1(X)}.
\end{align}
We assume now $\ph\in C^k_{\ets}(X)$ and that \eqref{dk} and \eqref{stimadk} hold. 
 Let $x\in X$, $h_1,...,h_{k+1}\in \ets$ and $\ep>0$. Then we have
\begin{align}
&\frac{1}{\ep}\Bigl(\pst\bigl(D_{\ets}^k\ph(\cdot)(\uts h_1,...,\uts h_k)\bigr)(x+\ep h_{k+1})-\pst\bigl(D_{\ets}^k\ph(\cdot)(\uts h_1,...,\uts h_k)\bigr)(x)\Bigr)\nonumber \\ &\leq\frac{1}{\ep}\int_X \Bigl|D_{\ets}^k\ph\bigl(y+m^{x+\ep h_{k+1}}(t,s)\bigl)(U(t,s)h_1,...,U(t,s)h_k)\nonumber\\
&-D_{\ets}^k\ph\bigl(y+m^x(t,s)\bigr)(U(t,s)h_1,...,U(t,s)h_k)\Bigr|\,\Ng_{0,Q(t,s)}(dy).\nonumber
\end{align}
Since 
{\small \begin{align}
&\frac{1}{\ep}\abs{D_{\ets}^k\ph\bigl(y+m^{x+\ep h_{k+1}}(t,s)\bigl)(U(t,s)h_1,...,U(t,s)h_k)-D_{\ets}^k\ph\bigl(y+m^x(t,s)\bigr)(U(t,s)h_1,...,U(t,s)h_k)}\nonumber\\
&\leq \frac{1}{\ep}\norm{D^k_{\ets}\ph\bigl(y+m^{x+\ep h_{k+1}}(t,s)\bigr)-D^k_{\ets}\ph\bigl(y+m^x(t,s)\bigr)}_{\op^{k+1}(\ets)} \prod_{i=1}^{k}\norm{U(t,s)h_{i}}_{\ets}\nonumber\\
&\leq\sup_{x\in X}\norm{D^{k+1}_{\ets}\ph}_{\op^{k+1}(\ets)} \norm{U(t,s)}^{k+1}_{\op(\ets)}\prod_{i=1}^{k+1}\norm{h_i}_{\ets}\nonumber
\end{align}}
by the Dominated Convergence Theorem, as $\ep\rightarrow 0^+$, we get
\begin{equation}
D^{k+1}_{\ets}(\pst\ph)(x)(h_1,...,h_{k+1})=\pst \bigl(D_{\ets}^{k+1}\ph(\cdot)(\uts h_1,...,\uts h_{k+1})\bigr)(x).
\end{equation}
Moreover
{\small\begin{align}
\abs{D^{k+1}_{\ets}(\pst\ph)(x)(h_1,...,h_{k+1})}&\leq \int_X \abs{D^{k+1}_{\ets}\ph(y+m^x(t,s))(\uts h_1,...,\uts h_{k+1})}\, \Ng_{0,Q(t,s)}(dy)\nonumber\\
&\leq\sup_{x\in X}\norm{D^{k+1}_{\ets}\ph}_{\op^{k+1}(\ets)}\norm{U(t,s)}^{k+1}_{\op(\ets)}\prod_{i=1}^{k+1}\norm{h_i}_{\ets},\nonumber
\end{align}}
so that
\begin{equation}
\norm{D^{k+1}_{\ets}(\pst\ph)(x)}_{\op^k(\ets)}\leq\sup_{x\in X}\norm{D^{k+1}_{\ets}\ph}_{\op^{k+1}(\ets)}\norm{U(t,s)}^{k+1}_{\op(\ets)}.
\end{equation}
Hence
\begin{equation}
\sup_{x\in X}\norm{D^{k+1}_{\ets}(\pst\ph)}_{\op^{k+1}(\ets)}\leq\norm{U(t,s)}^{k+1}_{\op(\ets)}\sup_{x\in X}\norm{D^{k+1}_{\ets}\ph}_{\op^{k+1}(\ets)}\leq M^{k+1}\norm{\ph}_{C_{\ets}^{k+1}(X)}.
\end{equation}

%
%
%
%
Finally \eqref{sale}, \eqref{pepe} are consequences of \eqref{dk} and of the definition of $\pst$.
\end{proof}

\begin{oss}\label{ossgc}
If $E$ and $F$ are Banach spaces continuously embedded in $X$ such that $U(t,s)(E)\subseteq F$, then $U(t,s)_{|_E}\in\op(E;F)$. Indeed, if $x_n\xrightarrow[n\rightarrow+\infty]{E}x$ and $U(t,s)_{|_E}x_n\xrightarrow[n\rightarrow+\infty]{F}y$, then $x_n\xrightarrow[n\rightarrow+\infty]{X}x$ and $U(t,s)x_n\xrightarrow[n\rightarrow+\infty]{X}y$, since the embeddings of $E$ in $X$ and of $F$ in $X$ are continuous. Since $U(t,s)\in\op(X)$ then $y=U(t,s)x$ and by the Closed Graph Theorem $U(t,s)_{|_E}\in\op(E,F)$.
\end{oss}

\begin{lem}\label{regck2}
We assume that Hypothesis \ref{1} holds and that \eqref{17} holds for every $(s,t)\in\Delta$. Then, for every fixed $t\in[0,T]$, $\ph\in C^k_E(X)$ and $h_1,...,\ h_k\in X$, the mapping
\begin{equation}\label{dek1}
(s,x)\in [0,t]\times X\longmapsto D_{\ets}^k(\pst\ph)(x)(h_1,...,h_k)\in\R,
\end{equation}
is continuous. 

If in addition the mapping $(s,t)\in\Delta\longmapsto\uts\in\op(X)$ is continuous, then the function
\begin{equation}\label{dek2}
(s,t,x)\in \overline{\Delta}\times X\longmapsto D_{\ets}^k(\pst\ph)(x)(h_1,...,h_k)\in\R,
\end{equation}
is continuous.\end{lem}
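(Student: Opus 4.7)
The plan is to exploit the explicit derivative formula~\eqref{dk} of Lemma~\ref{regck} together with a stochastic representation that uniformly packages the $s$-dependence of both the Gaussian measure and the integrand. On an auxiliary probability space carrying a $Y$-cylindrical Wiener process $W$, I set $Z(s):=\int_s^tU(t,r)B(r)\,dW_r$, which by Hypothesis~\ref{1}(4) defines an $L^2(\Omega;X)$-valued random variable of law $\Ng_{0,Q(t,s)}$; then~\eqref{dk} reads
\[
D_{\ets}^k(\pst\ph)(x)(h_1,\dots,h_k)=\mathbb{E}\bigl[D_{\ets}^k\ph(Z(s)+m^x(t,s))(U(t,s)h_1,\dots,U(t,s)h_k)\bigr].
\]
Since the integrand is bounded in absolute value by $\|D_{\ets}^k\ph\|_\infty M^k\prod_{i=1}^k\|h_i\|_{\ets}$, joint continuity reduces, via the standard subsequence characterisation of convergence in probability plus Dominated Convergence, to almost-sure convergence of the integrand along subsequences of any $(s_n,x_n)\to(s_0,x_0)$.

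For the first statement fix $t$ and take $(s_n,x_n)\to(s_0,x_0)$ in $[0,t]\times X$. I would verify three ingredients: (i) by It\^o isometry and Hypothesis~\ref{1}(4), $\|Z(s_n)-Z(s_0)\|_{L^2(\Omega;X)}^2=\bigl|\int_{s_n}^{s_0}\tr(U(t,r)Q(r)U(t,r)^\star)\,dr\bigr|\to 0$, so along a subsequence $Z(s_n)\to Z(s_0)$ almost surely in $X$; (ii) $m^{x_n}(t,s_n)\to m^{x_0}(t,s_0)$ in $X$, because strong continuity of $U$ and the uniform bound $\|U(t,s)\|_{\op(X)}\le N$ give $U(t,s_n)x_n\to U(t,s_0)x_0$, while $g(t,s_n)\to g(t,s_0)$ by Dominated Convergence applied to $r\mapsto U(t,r)f(r)$; (iii) strong continuity yields $U(t,s_n)h_i\to U(t,s_0)h_i$ in $X$, with the uniform $\ets$-bound $\|U(t,s_n)h_i\|_{\ets}\le M\|h_i\|_{\ets}$ from~\eqref{17}. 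Combining (i)--(ii) with the continuity of $D_{\ets}^k\ph\colon X\to\op^k(\ets)$ gives, along the subsequence, $\op^k(\ets)$-norm convergence of $D_{\ets}^k\ph(Z(s_n)+m^{x_n}(t,s_n))$ to $D_{\ets}^k\ph(Z(s_0)+m^{x_0}(t,s_0))$ almost surely; evaluating these operators on $(U(t,s_n)h_i)_i$ and using~(iii) via a telescoping $k$-linearity identity yields the almost-sure convergence of the integrand, and Dominated Convergence then delivers~\eqref{dek1}.

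The second statement is obtained by also letting $t$ vary. The additional norm continuity of $(s,t)\mapsto U(t,s)\in\op(X)$ enters in the analogue of~(i): writing $Z(t,s):=\int_s^tU(t,r)B(r)\,dW_r$, I decompose $Z(t_n,s_n)-Z(t_0,s_0)$ into $L^2$-integrals over the symmetric differences of $[s_n,t_n]$ and $[s_0,t_0]$ (controlled by Hypothesis~\ref{1}(4)) plus the integral of $(U(t_n,r)-U(t_0,r))B(r)$ over the common interval (controlled by It\^o isometry, the operator-norm convergence $\|U(t_n,r)-U(t_0,r)\|_{\op(X)}\to 0$, and~\eqref{bshyp}); the analogues of (ii) and (iii) follow verbatim. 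At the diagonal $s_0=t_0\in[0,T]$, $Z(t_0,s_0)\equiv 0$ and $m^{x_0}(t_0,s_0)=x_0$, so the expression reduces consistently to $D_{\ets}^k\ph(x_0)(h_1,\dots,h_k)=D_{\ets}^k(P_{t_0,t_0}\ph)(x_0)(h_1,\dots,h_k)$.

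The hard part is the telescoping step in~(iii): although $U(t,s_n)h_i$ is uniformly $\ets$-bounded and converges in $X$, it need not converge in $\ets$, so a naive pointwise-in-$y$ convergence of $D_{\ets}^k\ph(y)(U(t,s_n)h_i)_i$ to $D_{\ets}^k\ph(y)(U(t,s_0)h_i)_i$ need not hold. The resolution exploits the stronger $\op^k(\ets)$-operator-norm convergence of $D_{\ets}^k\ph$ at the varying random argument (which absorbs most of the difference uniformly in the $\ets$-bounded family $\{U(t,s_n)h_i\}$), coupled with weak $\ets$-compactness of this bounded family and uniqueness of weak limits forcing every subsequential weak $\ets$-limit to coincide with the $X$-limit $U(t,s_0)h_i$. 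This is precisely why the argument is phrased at the level of expectations rather than integrand-wise, and is the main obstacle I expect in writing a clean proof.
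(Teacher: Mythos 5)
Your reduction---the coupling $Z(s)=\int_s^t U(t,r)B(r)\,dW_r$, the subsequence criterion for convergence in probability, and ingredients (i)--(iii)---is sound, and you have put your finger on exactly the point where the general-$E$ case differs from the case $E=X$ of \cite[Lemma 2.3]{cerlun}, to which the paper's own proof simply defers: the vectors $U(t,s_n)h_i$ converge only in $X$ while staying merely bounded in $E$. The problem is that the resolution you propose for this point does not work. First, extracting weakly $E$-convergent subsequences from an $E$-bounded family requires $E$ to be reflexive, whereas the lemma assumes only that $E$ is a normed space continuously embedded in $X$; for $E=C([0,1])\subseteq L^2(0,1)=X$ and $\ell=\delta_{1/2}\in E^\star$, a sequence of bumps of height $1$ and shrinking support converges to $0$ in $X$, is bounded in $E$, and satisfies $\ell(h_n)=1$, so already for $k=1$ an arbitrary element of $E^\star$ need not pass to the limit. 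Second, even when weak $E$-convergence is available, for $k\geq 2$ each telescoping term has the form $T_0\bigl(b_1^{(n)},\dots,b_{i-1}^{(n)},\,U(t,s_n)h_i-U(t,s_0)h_i,\,b_{i+1},\dots,b_k\bigr)$, i.e.\ a bounded but $n$-dependent family of functionals in $E^\star$ evaluated on a weakly null sequence, and this need not vanish: take $E$ a Hilbert space compactly embedded in $X$, $T_0(u,v)=\iprod{u}{v}_E$ and $b_i^{(n)}=b_i+e_n$ with $(e_n)$ orthonormal in $E$; then $b_i^{(n)}\to b_i$ in $X$ and $b_i^{(n)}\rightharpoonup b_i$ in $E$, yet $T_0(b_1^{(n)},b_2^{(n)})\to\iprod{b_1}{b_2}_E+1$. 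Since your argument uses nothing about $D_{\ets}^k\ph(z)$ beyond its membership in $\op^k(\ets)$, it cannot close the gap: for a generic bounded $k$-linear form the convergence you need is false under your hypotheses.

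The missing ingredient is a structural property of $E$-differentials of functions that are continuous on the whole of $X$: if $\ph\in C^k_{\ets}(X)$, then $D^k_{\ets}\ph(z)$ is sequentially continuous on $E$-bounded sets with respect to convergence in $X$. For $k=1$ this follows from the identity $\tau\,D_{\ets}\ph(z)(w_n)=\ph(z+\tau w_n)-\ph(z)-\tau\int_0^1\bigl(D_{\ets}\ph(z+\sigma\tau w_n)-D_{\ets}\ph(z)\bigr)(w_n)\,d\sigma$: if $w_n\to 0$ in $X$ with $\norm{w_n}_{\ets}\leq R$, the first difference tends to $0$ because $\ph$ is continuous on $X$, while the integral term is at most $\tau\ep R$ for $\tau$ small, uniformly in $n$, because $D_{\ets}\ph:X\to \ets^\star$ is continuous at $z$ and $\norm{\sigma\tau w_n}_X$ is uniformly small; dividing by $\tau$, letting $n\to\infty$ and then $\ep\to 0$ gives $D_{\ets}\ph(z)(w_n)\to 0$. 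An induction on $k$, applying the same identity to $D^{k-1}_{\ets}\ph$ with values in $\op^{k-1}(\ets)$, yields the general case. With this property your scheme---operator-norm convergence of $D^k_{\ets}\ph$ at the random argument plus telescoping in the $h$-slots---does go through for every normed $E$, with no appeal to weak compactness. A smaller loose end in the second statement: at the diagonal you need $\Tr{Q(t_n,t_0)}\to 0$ as $t_n\downarrow t_0$, and since the integrand $\Tr{U(t_n,r)Q(r)U(t_n,r)^\star}$ depends on $t_n$ as well as on $r$, this does not follow from absolute continuity of a single integral and deserves its own justification.
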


\begin{proof}
The proofs of these continuity properties are analogous to \cite[Lemma 2.3]{cerlun}.\end{proof}

\begin{cor}\label{u0} We assume that Hypothesis \ref{1} holds and that there exists a normed space $(\ets,\norm{\, \cdot\, }_{\ets})$ continuously embedded in $X$ such that \eqref{17} holds for every $(s,t)\in\Delta$. Then, for every $\al>0$ and for every $\ph\in C^\al_{\ets}(X)$, $t\in(0,T]$, the function $$(s,x)\in[0,t]\times X\longmapsto u_0(s,x):=\pst\ph(x)\in\R,$$ belongs to $C^{0,\al}_{\ets}\bigl([0,t]\times X\bigr)$, and there exists $C=C(\al, T)>0$ such that
\begin{equation}
\norm{u_0}_{C^{0,\al}_{\ets}([0,t]\times X)}\leq C\norm{\ph}_{C^\al_{\ets}(X)}.
\end{equation}
Similarly, if $\ph\in Z^k_{\ets}(X)$ for some $k\in\Nat$, then the function $u_0$ belongs to $Z^{0,k}_{\ets}\bigl([0,t]\times X\bigr)$, and there exists $C=C(k, T)>0$ such that
\begin{equation}
\norm{u_0}_{Z^{0,k}_{\ets}([0,t]\times X)}\leq C\norm{\ph}_{Z^k_{\ets}(X)}.
\end{equation}
\end{cor}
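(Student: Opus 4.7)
The plan is to verify the three defining conditions of $C^{0,\alpha}_E([0,t]\times X)$: pointwise membership $P_{s,t}\varphi(\cdot)\in C^\alpha_E(X)$ for every $s\in[0,t]$, a uniform-in-$s$ bound on the corresponding norms, and joint continuity of $u_0$ together with its partial $E$-derivatives up to order $[\alpha]$ (the latter required only when $\alpha\geq 1$). The Zygmund case runs along the same lines.

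Write $\alpha=k+\sigma$ with $k=[\alpha]$ and $\sigma\in(0,1)$. For each fixed $s\in[0,t)$, Lemma \ref{regck} applies and gives $P_{s,t}\varphi\in C^\alpha_E(X)$ together with the estimates \eqref{stimadk} and \eqref{sale}; combined they produce
\begin{equation*}
\norm{P_{s,t}\varphi}_{C^\alpha_E(X)}\leq C_\alpha\Bigl(1+\norm{U(t,s)}_{\op(E)}^{\alpha}\Bigr)\norm{\varphi}_{C^\alpha_E(X)}
\end{equation*}
for a purely combinatorial constant $C_\alpha$. At $s=t$ the identity $P_{t,t}=I$ makes the bound trivial.

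The crucial step is to uniformize in $s$ by showing that $M_E:=\sup_{(s,t)\in\overline\Delta}\norm{U(t,s)|_E}_{\op(E)}<+\infty$. Remark \ref{ossgc} (Closed Graph Theorem) first guarantees $U(t,s)|_E\in\op(E)$ for each $(s,t)\in\overline\Delta$. One then applies the Uniform Boundedness Theorem in $\op(E)$: pointwise $E$-boundedness of the orbits $\{U(t,s)x\}_{(s,t)\in\overline\Delta}$ for each $x\in E$ upgrades to the desired uniform bound $M_E$, which depends only on $T$. Substituting into the previous inequality yields
\begin{equation*}
\sup_{s\in[0,t]}\norm{P_{s,t}\varphi}_{C^\alpha_E(X)}\leq C(\alpha,T)\norm{\varphi}_{C^\alpha_E(X)}.
\end{equation*}

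Joint continuity is then supplied by Lemma \ref{regck2}: the case $k=0$ gives continuity of $u_0$ itself on $[0,t]\times X$, while, when $\alpha\geq 1$, the cases $k=1,\dots,[\alpha]$ give continuity of $(s,x)\mapsto\partial^k u_0/(\partial h_1\cdots\partial h_k)(s,x)=D_E^k(P_{s,t}\varphi)(x)(h_1,\dots,h_k)$ for every $h_1,\dots,h_k\in E$. This verifies $u_0\in C^{0,\alpha}_E([0,t]\times X)$ with the claimed norm estimate. The Zygmund variant is proved verbatim, replacing \eqref{sale} by \eqref{pepe} and $C^\alpha_E$ by $Z^k_E$ throughout. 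The main obstacle is securing the uniform bound $M_E$: strong continuity of $U$ on $X$ by itself only yields a $\op(X)$-uniform bound, and transferring it to $\op(E)$ requires the combination of the Closed Graph Theorem from Remark \ref{ossgc} with the Uniform Boundedness Theorem, both made available by the standing assumption that \eqref{17} holds on all of $\Delta$.
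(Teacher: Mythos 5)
Your overall route --- pointwise estimates from Lemma \ref{regck}, uniformity in $s$, joint continuity from Lemma \ref{regck2} --- is exactly how the paper intends this corollary to follow (it is stated without proof as a direct consequence of those two lemmas), and apart from one step your argument is fine. The one step that does not hold up is the ``crucial'' uniformization via the Uniform Boundedness Theorem. First, it is unnecessary: condition \eqref{17} as used throughout the paper (see the bounds $M^k\sup_x\norm{D^k_E\ph}$ in \eqref{stimadk}, and the factor $2M^k$ in the proof of Proposition \ref{passozero}) is read with a \emph{single} constant $M$ valid for all $(s,t)\in\Delta$, so the uniform bound is part of the hypothesis, not something to be derived. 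Second, as an argument it would not work: the Uniform Boundedness Theorem needs $E$ to be a Banach (or at least barrelled) space, whereas the corollary only assumes $E$ is a normed space continuously embedded in $X$; and even granting completeness, you would need pointwise boundedness $\sup_{(s,t)\in\overline{\Delta}}\norm{U(t,s)x}_E<+\infty$ for each fixed $x\in E$, which does not follow from the strong continuity of $(s,t)\mapsto U(t,s)x$ in the $X$-norm --- that only gives boundedness of the orbit in $X$, not in $E$. So either take the uniform $M$ directly from \eqref{17} (the intended reading) or add it as a hypothesis; do not try to manufacture it from the Banach--Steinhaus theorem. One further small point: your decomposition $\al=k+\si$ with $\si\in(0,1)$ silently excludes integer $\al$, for which the claim follows from \eqref{stimadk} alone; this is cosmetic but worth a sentence.
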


\begin{thm}\label{regcm}
Assuming that Hypothesis \ref{1} holds and there exists a normed space $(\ets,\norm{\, \cdot\, }_{\ets})$ continuously embedded in $X$ such that \eqref{16} holds, 
 $\displaystyle{\pst\ph\in\bigcap_{k\in\Nat} C^k_{\ets}(X)}$ for every $\ph\in C_b(X)$ and
\begin{equation}\label{grad}
D_{\ets}(\pst\ph)(x)(h)=\int_X\ph\bigl(y+m^x(t,s)\bigr)\,\rwhat{ U(t,s)h}(y)\,\Ng_{0,Q(t,s)}(dy),\ \ h\in \ets,
\end{equation}
where $\,\widehat{\cdot}\,$ is with respect to the Gaussian measure $\Ng_{0,Q(t,s)}$ (see Proposition \ref{teoria}).

Moreover, there exists $C>0$ such that
\begin{equation}
\sup_{x\in X}\norm{D_{\ets}(\pst\ph)(x)}_{\ets^\star}\leq C\norm{\Lats}_{\op(\ets; X)}\norm{\ph}_\infty.
\end{equation}
For $n\geq2$
{\small \begin{equation} \label{derivate}
D^n_{\ets}(\pst\ph)(x)(h_1,...,h_n)=\int_X\ph\bigl(y+m^x(t,s)\bigr)\, I_n(t,s)(y)(h_1,...,h_n)\, \Ng_{0,Q(t,s)}(dy),\ \ h_1,...,h_n\in \ets,
\end{equation}}
where
{\small\begin{align}
I_n(t,s)(y)(h_1,...,h_n)&:=\prod_{i=1}^n\rwhat{ U(t,s)h_i}(y)\nonumber \\ 
&+\sum_{s=1}^{r_n}(-1)^s\sum_{\substack{i_1,...i_{2s},\\ i_{2k-1}<i_{2k},\\ i_{2k-1}<i_{2k+1}}}^n\prod_{i=1}^s\ix{\Lats h_{i_{2k-1}}}{\Lats h_{i_{2k}}}\prod_{\substack{i_m=1,\\ i_m\neq i_1,...,i_{2s}}}^n\rwhat{ U(t,s)h_{i_m}}(y)\end{align}}
and
\begin{equation}
r_n=
\begin{cases}
\frac{n}{2}\ \ \mbox{if}\ n\ \mbox{is even},\\
\frac{n-1}{2}\ \ \mbox{if}\ n\ \mbox{is odd}. \nonumber
\end{cases}
\end{equation}
It follows that for every $n\geq 2$ there exists $C_n>0$ such that
\begin{equation}\label{stimadn}
\sup_{x\in X}\norm{D_{\ets}^n(\pst\ph)(x)}_{\op^n(\ets)}\leq C_n\norm{\Lats}^n_{\op(\ets; X)}\norm{\ph}_\infty.
\end{equation}
\end{thm}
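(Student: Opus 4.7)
The plan is to reduce everything to the Cameron--Martin formula (Proposition \ref{teoria}) applied to the Gaussian measure $\Ng_{0,Q(t,s)}$. Since \eqref{16} gives $\uts h\in\hts$ for every $h\in\ets$, with $\norm{\uts h}_{\hts}=\norm{\Lats h}_X$, starting from
$$\pst\ph(x+h)=\int_X\ph\bigl(y+\mts+\uts h\bigr)\,\Ng_{0,Q(t,s)}(dy),$$
a translation of the integration variable by $\uts h$ together with the Cameron--Martin formula applied to $\uts h$ yields the key identity
\begin{equation}\label{keyid}
\pst\ph(x+h)=\int_X\ph\bigl(y+\mts\bigr)\exp\Bigl(-\tfrac12\norm{\Lats h}_X^2+\rwhat{\uts h}(y)\Bigr)\,\Ng_{0,Q(t,s)}(dy),
\end{equation}
which makes sense for $\ph\in C_b(X)$ because \eqref{stimahcap} gives $\rwhat{\uts h}\in L^p(\Ng_{0,Q(t,s)})$ for every $p\in[1,\infty)$.

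To compute a mixed derivative $D^n_{\ets}(\pst\ph)(x)(h_1,\dots,h_n)$ I would apply \eqref{keyid} with $h=\sum_{i=1}^n\epsilon_i h_i$, writing
$$\pst\ph\Bigl(x+\sum_i\epsilon_i h_i\Bigr)=\int_X\ph(y+\mts)\,\mathcal{E}(\vec\epsilon,y)\,\Ng_{0,Q(t,s)}(dy),$$
with $\mathcal{E}(\vec\epsilon,y)=\exp\bigl(-\tfrac12\sum_{i,j}\epsilon_i\epsilon_j\ix{\Lats h_i}{\Lats h_j}+\sum_i\epsilon_i\rwhat{\uts h_i}(y)\bigr)$, and then differentiate once in each $\epsilon_i$ at $\vec\epsilon=0$. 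Moving the derivatives inside the integral is justified by dominated convergence: for $\vec\epsilon$ in a bounded neighbourhood of the origin, $\mathcal{E}(\vec\epsilon,\cdot)$ belongs to every $L^p(\Ng_{0,Q(t,s)})$ because $\sum_i\epsilon_i\rwhat{\uts h_i}$ is Gaussian with finite exponential moments of all orders, and the polynomials in $\rwhat{\uts h_i}$ produced by successive $\vec\epsilon$-differentiation lie in every $L^q$ by \eqref{stimahcap}; Hölder then provides an integrable majorant.

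The combinatorial heart of the argument is the Wick-type identity
$$\partial_{\epsilon_1}\cdots\partial_{\epsilon_n}\exp\bigl(A(\vec\epsilon)+B(\vec\epsilon)\bigr)\Big|_{\vec\epsilon=0}=\sum_{\pi}\prod_{\{i,j\}\in\pi}\bigl(-\ix{\Lats h_i}{\Lats h_j}\bigr)\prod_{\{k\}\in\pi}\rwhat{\uts h_k}(y),$$
valid whenever $A$ is linear and $B$ quadratic in $\vec\epsilon$, with $\pi$ running over all partitions of $\{1,\dots,n\}$ whose blocks have cardinality one or two. A direct induction on $n$ establishes this formula, and reorganising the sum by grouping partitions with a fixed number $s$ of two-element blocks yields precisely the expression for $I_n(t,s)(y)$ in \eqref{derivate}, with the sign $(-1)^s$ accounting for the $s$ pair contributions. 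For $n=1$ no pairs exist and the formula collapses to \eqref{grad}.

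Finally, the bounds \eqref{stimadn} follow at once from Hölder's inequality and \eqref{stimahcap}: each factor $\rwhat{\uts h_i}$ has $L^p$-norm bounded by $c_p\norm{\Lats h_i}_X\le c_p\norm{\Lats}_{\op(\ets;X)}\norm{h_i}_{\ets}$, and each scalar product $\abs{\ix{\Lats h_{i_{2k-1}}}{\Lats h_{i_{2k}}}}$ is controlled by Cauchy--Schwarz in the same way. The main technical obstacle is not the combinatorics itself but the uniform $L^p$-integrability of $\mathcal{E}(\vec\epsilon,\cdot)$ and of its $\vec\epsilon$-derivatives of increasing order, needed to commute differentiation and integration at arbitrary order $n$; this is handled by choosing $p$ large enough to absorb every Gaussian factor and applying Hölder. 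The continuity of $D^n_{\ets}(\pst\ph)$ in $x$ follows from dominated convergence applied to \eqref{derivate}, upgrading the Gâteaux derivatives obtained by the procedure above to Fréchet derivatives.
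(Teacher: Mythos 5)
Your argument is correct and reaches the same formulas, but by a genuinely different route. The paper proves the theorem by induction on $n$: it differentiates the already-established integral formula for $D^{n-1}_{\ets}(\pst\ph)$ once more, applying the Cameron--Martin formula to the increment $\ep h_n$ and expanding $I_{n-1}(t,s)(y-\ep\uts h_n)$ explicitly to first order in $\ep$, which is where the extra pairings $\ix{\Lats h_{i}}{\Lats h_n}$ and the sign $(-1)^s$ emerge term by term. You instead encode all derivatives at once in the generating function $\mathcal{E}(\vec\ep,y)$ obtained from the single translation identity, and read off $I_n$ from the Wick-type expansion of $\de_{\ep_1}\cdots\de_{\ep_n}\exp(A+B)$ over partial pairings; this makes the combinatorial structure of $I_n$ (partitions into singletons and pairs, one factor $-\ix{\Lats h_i}{\Lats h_j}$ per pair) transparent rather than something to be verified inductively, at the price of a slightly heavier uniform-integrability argument for $\mathcal{E}(\vec\ep,\cdot)$ and its $\vec\ep$-derivatives on a neighbourhood of the origin --- which you correctly reduce to the Gaussianity of $\rwhat{\uts h}$ and H\"older. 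The ingredients you use (the identity $\norm{\uts h}_{\cm_{t,s}}=\norm{\Lats h}_X$, the estimate \eqref{stimahcap}, linearity of $h\mapsto\rwhat{h}$ from \eqref{cappucci}) are the same as the paper's. The one step you should spell out more is the passage from mixed partials in the auxiliary parameters $\vec\ep$ to iterated $\ets$-Fr\'echet differentiability of $x\mapsto D^{n-1}_{\ets}(\pst\ph)(x)\in\op^{n-1}(\ets)$: since the differentiation under the integral sign is valid for $\vec\ep$ in a full neighbourhood of $0$ and not merely at $0$, the $(n-1)$-fold partial evaluated at $\vec\ep=(0,\dots,0,\ep_n)$ is $D^{n-1}_{\ets}(\pst\ph)(x+\ep_n h_n)(h_1,\dots,h_{n-1})$, whose $\ep_n$-derivative at $0$ is the G\^ateaux derivative of $D^{n-1}_{\ets}(\pst\ph)$ in the direction $h_n$; boundedness of the resulting $n$-linear form together with its continuity in $x$ (your dominated-convergence argument, which is the paper's Step 2 device) then upgrades this to the Fr\'echet derivative. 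This is exactly the point the paper also treats briefly (``the same arguments of Step 2 and Step 3 imply\dots''), so it is a matter of presentation rather than a gap.
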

\begin{proof}
\leavevmode
\begin{enumerate}
\item[Step 1.] We prove that $\pst\ph(x)\in C_b(X)$ for every $\ph\in C_b(X)$. 
For $x, x_0\in X$ we have
\begin{align}
\abs{\pst\ph(x)-\pst\ph(x_0)}\leq\int_X\abs{\ph\bigl(y+m^x(t,s)\bigr)-\ph\bigl(y+m^{x_0}(t,s)\bigr)}\,\Ng_{0,Q(t,s)}(dy).
\end{align}
Since $$\abs{\ph\bigl(y+m^x(t,s)\bigr)-\ph\bigl(y+m^{x_0}(t,s)\bigr)}\leq2 \norm{\ph}_\infty$$ the statement follows letting $x\rightarrow x_0$, by the Dominated Convergence theorem . 

\item[Step 2.] We prove that $\pst\ph(x)\in C^1_{\ets}(X)$ for every $\ph\in C_b(X)$ and that \eqref{grad} holds.\\
For every $\ep\in (0,1)$, $x\in X$ and $h\in \ets$ we have
\begin{align} 
\frac{\pst\ph(x+\ep h)-\pst\ph(x)}{\ep}=&\frac{1}{\ep}\biggl(\int_X\ph(y+m^x(t,s))\,\Ng_{\ep U(t,s)h,Q(t,s)}(dy)\nonumber \\
&-\int_X\ph(y+m^x(t,s))\,\Ng_{0,Q(t,s)}(dy)\biggr). \nonumber
\end{align} 
Since $\ep\,U(t,s)h\in\cm_{t,s}$, thanks to the Cameron-Martin formula we get 
\begin{align}
\Ng_{\ep U(t,s)h,Q(t,s)}(dy)&=  \exp\biggl(-\frac{1}{2}\norm{\ep U(t,s)h}^2_{\cm_{t,s}}+\rwhat{\ep U(t,s)h}(y)\biggr)\Ng_{0,Q(t,s)}(dy)\nonumber\\
&=\exp\biggl(-\frac{1}{2}\ep^2\norm{\Lats h}^2_X+\ep\rwhat{U(t,s)h}(y)\biggr)\Ng_{0,Q(t,s)}(dy). \nonumber
\end{align}
Therefore, setting 
\begin{equation}
f_\ep(y)=-\frac{1}{2}\ep\norm{\Lats h}^2_X+\rwhat{ U(t,s)h}(y),
\end{equation}
we get
\begin{equation}
\frac{\pst\ph(x+\ep h)-\pst\ph(x)}{\ep}=\int_X\frac{\exp(\ep f_\ep(y))-1}{\ep}\,\ph\bigl(y+m^x(t,s)\bigr)\,\Ng_{0,Q(t,s)}(dy).
\end{equation}
Now 
\begin{align}
&\lim_{\ep\rightarrow 0} f_\ep(y)=\rwhat{U(t,s)h}(y)\ \ \mbox{for a.e.}\ y \nonumber\\
&\abs{\frac{\exp\Bigl(\ep f_\ep(y)\Bigr)-1}{\ep}\,\ph\bigl(y+m^x(t,s)\bigr)}\leq C\abs{f_\ep(y)}\Bigl(\exp\abs{ f_\ep(y)}+1\Bigr)\norm{\ph}_\infty\nonumber \\
&y\mapsto \rwhat{U(t,s)h}(y)\exp\biggl(\rwhat{ U(t,s)h}(y)\biggr)\in L^1(X,\Ng_{0,Q(t,s)}).\nonumber
\end{align}
Hence by the Dominated Convergence theorem we obtain 
\begin{equation}\label{dom}
\lim_{\ep\rightarrow 0}\frac{\pst\ph(x+\ep h)-\pst\ph(x)}{\ep}=\int_X\ph(y+m^x(t,s))\rwhat{U(t,s)h}(y)\,\Ng_{0,Q(t,s)}(dy).
\end{equation}
Moreover, by \eqref{stimahcap} with $p=1$ we get
\begin{align}
\abs{\int_X\ph(y+m^x(t,s))\rwhat{U(t,s)h}(y)\,\Ng_{0,Q(t,s)}(dy)}&\leq \norm{\ph}_{\infty}\norm{\rwhat{U(t,s)h}(\cdot)}_{L^1(X,\Ng_{0,Q(t,s)})}\nonumber\\
 &\leq\norm{\ph}_{\infty}c_1\norm{\Lats}_{\op(\ets; X)}\norm{h}_{\ets}, \nonumber
\end{align}
so that $\pst\ph$ is $\ets$-G\^{a}teaux differentiable at $x$ and  $$D_{\ets}^G(\pst\ph)(x)=\int_X\ph(y+m^x(t,s))\rwhat{U(t,s)h}(y)\,\Ng_{0,Q(t,s)}(dy).$$
To conclude we prove that $D_{\ets}^G(\pst\ph):X\longrightarrow \ets^\star$ is continuous. Indeed for $x, x_0\in X$ we apply \eqref{stimahcap} with $p=2$ and we have
\begin{align}
&\abs{D_{\ets}^G(\pst\ph)(x)(h)-D_{\ets}^G(\pst\ph)(x_0)(h)}\nonumber\\
&\leq\int_X\abs{\ph(y+m^{x}(t,s))-\ph(y+m^{x_0}(t,s))}\abs{\rwhat{U(t,s)h}(y)}\,\Ng(0,Q(t,s))(dy)\nonumber\\
&\leq c_2\norm{\ph(y+m^{x}(t,s))-\ph(y+m^{x_0}(t,s))}_{L^2(X,\Ng_{0,Q(t,s)})}\norm{\Lats}_{\op(\ets; X)}\norm{h}_{\ets}.\nonumber
\end{align}
Hence $$\norm{D_{\ets}^G(\pst\ph)(x)-D_{\ets}^G(\pst\ph)(x_0)}_{\ets^\star}\leq\overline{c}\norm{\ph(y+m^{x}(t,s))-\ph(y+m^{x_0}(t,s))}_{L^2(X,\Ng_{0,Q(t,s)})}$$
and since $\abs{\ph\bigl(y+m^x(t,s)\bigr)-\ph\bigl(y+m^{x_0}(t,s)\bigr)}\leq2 \norm{\ph}_\infty$, the statement follows by the Dominated Convergence Theorem letting $x\rightarrow x_0$.


\item[Step 3.] We prove that $\pst\ph\in C^2_{\ets}(X)$. We show first that for every  $h\in \ets$, the mapping $$D_{\ets}(\pst\ph)(\cdot)(h): X\longrightarrow\R$$ is  $\ets$-G\^{a}teaux differentiable. Let $\widetilde{h}\in \ets$, we set
\begin{equation*}
f_\ep(y)=-\frac{1}{2}\ep\norm{\Lats \widetilde{h}}^2_X+\rwhat{U(t,s)\widetilde{h}}(y),
\end{equation*}
and due to \eqref{grad} and using again the Cameron-Martin formula, for every $\ep>0$ we have
\begin{align}
&\frac{D_{\ets}(\pst\ph)(x+\ep \widetilde{h})(h)-D_{\ets}(\pst\ph)(x)(h)}{\ep}\nonumber \\
&=\frac{1}{\ep}\biggl[\int_X\ph\bigl(y+m^x(t,s)\bigr)\,\rwhat{U(t,s)h}\bigl(y-\ep \uts \widetilde{h}\bigr)\,\Ng_{\ep\uts \widetilde{h},Q(t,s)}(dy)\nonumber \\
&-\int_X\ph\bigl(y+m^x(t,s)\bigr)\,\rwhat{U(t,s)h}(y)\,\Ng_{0,Q(t,s)}(dy)\biggr]\nonumber\\
&=\frac{1}{\ep}\int_X\ph\bigl(y+m^x(t,s)\bigr)\rwhat{U(t,s)h}(y)\bigl(\exp(\ep f_\ep(x))-1\bigr)\Ng_{0,Q(t,s)}(dy)\nonumber\\
&-\int_X\ph(y+m^x(t,s))\ix{\Lats h}{\Lats\widetilde{h}}\exp(\ep f_\ep(y))\Ng_{0,Q(t,s)}(dy).\nonumber
\end{align}
Proceeding as in Step 2, we obtain
\begin{align}\label{conv2}
&\lim_{\ep\rightarrow 0}\frac{D_{\ets}(\pst\ph)(x+\ep \widetilde{h})h-D_{\ets}(\pst\ph)(x)h}{\ep}\nonumber \\
&=\int_X\ph\bigl(y+m^x(t,s)\bigr)\,\rwhat{U(t,s)h}(y)\,\rwhat{U(t,s)\widetilde{h}}(y)\,\Ng_{0,Q(t,s)}(dy)\nonumber\\
 &-\ix{\Lats h}{\Lats\widetilde{h}}\pst\ph(x).
\end{align}
The right-hand side of \ref{conv2} is the G\^{a}teaux derivative of $D_{\ets}\pst\ph(\cdot)h$ at $x$. In the same way we did in Step 2, we can show that   the mapping $D_{\ets}^G(D_{\ets}(\pst\ph)(\cdot)h):X\longrightarrow \ets^\star$ is continuous, so that we conclude that $\pst\ph\in C^2_{\ets}(X)$.
\item[Step 4.] We prove that $\pst\ph\in C^n_{\ets}(X)$ for every $n\in\Nat$ and that \eqref{derivate} holds.
We proceed by recurrence, so we assume that $\pst\ph\in C^{n-1}_{\ets}(X)$ and that formula \eqref{derivate} holds for $D^{n-1}_{\ets}(\pst\ph)$, for some $n\geq 3$.

We first show that $D^{n-1}_{\ets}(\pst)\ph$ is $\ets$- G\^ateaux differentiable. For $x\in X$, $h_1,...,h_{n-1},h_n\in \ets$ and $\ep>0$ we set
 \begin{equation*}
f_\ep(y)=-\frac{1}{2}\ep\norm{\Lats h_n}^2_X+\rwhat{U(t,s)h_n}(y).
\end{equation*}
and due to the Cameron-Martin Formula we have
 \begin{align}
 &D^{n-1}_{\ets}(\pst\ph)(x+\ep h_n)(h_1,...,h_{n-1})\nonumber \\
 &=\int_X\ph(m^x(t,s)+y) I_{n-1}(t,s)(y-\ep \uts h_n) \exp(\ep f_\ep(y))\nonumber\Ng_{0,Q(t,s)}(dy).
  \end{align}
Moreover we have 
\begin{align}
 I_{n-1}&(t,s)(y-\ep \uts h_n)(h_1,...,h_{n-1})=I_{n-1}(t,s)(y)(h_1,...,h_{n-1}) \nonumber \\
 &-\ep\sum_{i=1}^{n-1}\ix{\Lats h_i}{\Lats h_n}\prod_{\substack{j=1\\j\neq i}}^{n-1}\rwhat{U(t,s)h_j}(y)\nonumber \\
 &-\ep\sum_{s=1}^{r_{n-1}}(-1)^s\sum_{\substack{i_1,...i_{2s},\\ i_{2k-1}<i_{2k},\\ i_{2k-1}<i_{2k+1}}}^n\prod_{i=1}^s\ix{\Lats h_{i_{2k-1}}}{\Lats h_{i_{2k}}}\nonumber \\
&\times\sum_{\substack{i_m=1,\\ i_m\neq i_1,...,i_{2s}}}^{n-1}\ix{\Lats h_{i_m}}{\Lats h_n}\prod_{\substack{i_j=1\\i_j\neq i_m,i_1,...,i_{2s}}}^{n-1}\rwhat{U(t,s)h_{i_j}}(y)+O(\ep^2).\nonumber 
\end{align}
In the same way we did in Step 2 and in Step 3, by the Dominated Convergence theorem we have 
\begin{align}
&\lim_{\ep\rightarrow 0}\frac{D^{n-1}_{\ets}(\pst\ph)(x+\ep h_n)(h_1,...,h_{n-1})-D^{n-1}_{\ets}(\pst\ph)(x)(h_1,...,h_{n-1})}{\ep}=\nonumber \\
&=\int_X \ph(m^x(t,s)+y) I_{n-1}(t,s)(y)(h_1,...,h_{n-1})\,\rwhat{U(t,s)h_n}(y)\,\Ng_{0,Q(t,s)}(dy)+\nonumber \\
&-\int_X \ph(m^x(t,s)+y) \Biggl[\sum_{i=1}^{n-1}\ix{\Lats h_i}{\Lats h_n}\prod_{\substack{j=1\\j\neq i}}^{n-1}\rwhat{U(t,s)h_j}(y)+\nonumber \\
&+\sum_{s=1}^{r_{n-1}}(-1)^s\sum_{\substack{i_1,...i_{2s},\\ i_{2k-1}<i_{2k},\\ i_{2k-1}<i_{2k+1}}}^n\prod_{i=1}^s\ix{\Lats h_{i_{2k-1}}}{\Lats h_{i_{2k}}}\times\nonumber \\
&\times\sum_{\substack{i_m=1,\\ i_m\neq i_1,...,i_{2s}}}^{n-1}\ix{\Lats h_{i_m}}{\Lats h_n}\prod_{\substack{i_j=1\\i_j\neq i_m,i_1,...,i_{2s}}}^{n-1}\rwhat{U(t,s)h_{i_j}}(y)\Biggr]\,\Ng_{0,Q(t,s)}(dy). \nonumber
\end{align}
It is easy to show that the right hand side of the equation above coincides with the expression of $D^n_{\ets}(\pst\ph)(h_1,...,h_n)$ given in \eqref{derivate}. The same arguments of Step 2 and Step 3 imply that $\pst\ph\in C^n_{\ets}(X)$.
\end{enumerate}
\end{proof}

\begin{oss}
We assume that the hypotheses of Theorem \ref{regcm} hold and that $E$ is a Banach space. Then  $U(t,s)_{|_{\ets}}\in\op(\ets,\cm_{t,s})$ if and only if $\Lats\in\op(E;X)$. 
Indeed, $\norm{U(t,s)}_{\op(E,\cm_{t,s})}=\norm{\Lats}_{\op(E,X)}$ and by Remark \ref{ossgc} $U(t,s)$ belongs to $\op(E,\cm_{t,s})$.
\end{oss}

\begin{oss}If $\ets=X$ in Theorem \ref{regcm}, then $\pst$ is strong Feller as proved in \cite{cerlun, MR3466574}.
\end{oss}
\begin{oss}\label{3bis}
Similarly to \cite{cerlun}, fixing $(s,t)\in\Delta$ we define the operator $L:L^2\bigl((s,t); Y\bigr)\longrightarrow X$ given by 
\begin{equation}\label{Lop}
Ly:=\int_s^t\utz B(\si)y(\si)\,d\si,\ \ \ y\in L^2\bigl((s,t); Y\bigr).
\end{equation} 
The adjoint operator $L^\star:X\longrightarrow L^2\bigl((s,t); Y\bigr)$ satisfies $\ix{Ly}{x}=\iprod{y}{L^\star x}_{L^2((s,t); Y)}$ for every $x\in X$ and $y\in L^2\bigl((s,t); Y\bigr)$, which means $$\int_s^t\ix{\utz B(\si)y(\si)}{x}\,d\si=\int_s^t\iprod{y(\si)}{(L^\star x)(\si)}_X\,d\si,\ \ \ x\in X,y\in L^2\bigl((s,t); Y\bigr),$$ so that $$(L^\star x)(\si)=B^\star(\si) U^\star(t,\si)x,\ \ x\in X,\ \mbox{a.e}\ \si\in(s,t)$$ and we get $$LL^\star x=\int_s^t\utz B(\si)B^\star(\si) U^\star(t,\si)x \,d\si=Q(t,s)x,\ \ x\in X.$$
Therefore, by the general theory of linear operators in Hilbert spaces (e.g., \cite[Cor. B3]{MR3236753}), we get
\begin{equation}
\begin{cases}
\re{L}=\re{\Qp{t,s}} \\
~\\
\norm{\Qm{t,s} x}_X=\norm{L^{-1}x}_{L^2((s,t); Y)},\ \ \ x\in \cm_{t,s}.
\end{cases}
\end{equation}
Since in general $L$ is not invertible, $L^{-1}$ is meant as the pseudo-inverse of $L$. Hence 
\begin{equation}\label{Lpseudo}
\norm{L^{-1}x}_{L^2((s,t); Y)}=\min\Bigl\{\norm{y}_{L^2((s,t); Y)}\ :\ Ly=x\Bigr\},\ \ \ x\in \cm_{t,s}.
\end{equation}
The range of $L$ is the set of the traces at time $t$ of the mild solutions of the evolution problems

\begin{equation}
\begin{cases}
u'(r)=A(r)u(r)+B(r)y(r),\ \ s<r<t,\\
u(s)=0
\end{cases}
\end{equation}
where $y$ varies in $L^2((s,t);Y)$. So \eqref{16} may be reformulated requiring that $U(t,s)$ maps $\ets$ in the trace space.
\end{oss}

Now we combine Lemma \ref{regck} and Theorem \ref{regcm} to obtain the following result.

\begin{cor}\label{reg}
We assume that Hypotheses \ref{1} holds and there exists a normed space $(\ets,\norm{\, \cdot\, }_{\ets})$ continuously embedded in $X$ satisfying \eqref{16} and \eqref{17}. Then for every $k,n\in\Nat\cup\{0\}$ such that $k+n\geq 1$, $\ph\in C^k_{\ets}(X)$, $\pst\ph\in C_{\ets}^{k+n}(X)$ and 
\begin{align}\label{dnk}
&D_{\ets}^{k+n}(\pst\ph)(x)(h_1,...,h_{k+n})\nonumber\\
&=\int_X D_{\ets}^k\ph\bigl(\mts+y\bigr)\bigl(\uts h_1,...,\uts h_k\bigr) I_n(t,s)(y)(h_{k+1},...,h_{k+n})\,\Ng_{0,Q(t,s)}(dy).
\end{align}
For every $\al_2\geq\al_1\geq 0$ there exists $C=C(\al_1,\al_2)$ such that
\begin{equation} \label{alstima}
\norm{\pst\ph}_{C^{\al_2}_{\ets}(X)}\leq C\bigl(\norm{\Lats}_{\op(\ets;X)}^{\al_2-\al_1}+1\bigr)\norm{\ph}_{C^{\al_1}_{\ets}(X)},\ \ \ph\in C^{\al_1}_{\ets}(X).\end{equation}
\end{cor}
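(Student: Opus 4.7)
The plan is to combine Lemma \ref{regck} and Theorem \ref{regcm} in sequence: Lemma \ref{regck} produces the first $k$ $\ets$-derivatives of $\pst\ph$, while Theorem \ref{regcm} provides the additional $n$ derivatives of the resulting bounded continuous function together with the Cameron-Martin kernel $I_n(t,s)$. The estimate \eqref{alstima} then follows from the formula in the integer case, and from an interpolation argument in the fractional case.

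For formula \eqref{dnk}, fix $h_1,\ldots,h_k\in\ets$ and set $\psi_{h_1,\ldots,h_k}(x):=D_\ets^k\ph(x)(\uts h_1,\ldots,\uts h_k)$. By \eqref{17} each $\uts h_i$ lies in $\ets$, so $\ph\in C^k_\ets(X)$ implies $\psi_{h_1,\ldots,h_k}\in C_b(X)$. Lemma \ref{regck} gives
\begin{equation*}
D_\ets^k(\pst\ph)(x)(h_1,\ldots,h_k) = \pst\psi_{h_1,\ldots,h_k}(x),
\end{equation*}
and Theorem \ref{regcm}, applicable thanks to \eqref{16}, shows $\pst\psi_{h_1,\ldots,h_k}\in C^n_\ets(X)$ with its $n$-th $\ets$-derivative given by \eqref{derivate}; substituting back the expression of $\psi_{h_1,\ldots,h_k}$ produces exactly the right-hand side of \eqref{dnk}. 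To upgrade these pointwise identities to a genuine $(k+n)$-fold $\ets$-Fr\'echet derivative of the $\op^k(\ets)$-valued map $D_\ets^k(\pst\ph)$, I would observe that the candidate expression is multilinear and jointly continuous in $(h_1,\ldots,h_{k+n})$ with the uniform pointwise bound
\begin{equation*}
|D_\ets^{k+n}(\pst\ph)(x)(h_1,\ldots,h_{k+n})| \leq C_n M^k\norm{\Lats}^n_{\op(\ets;X)}\norm{D_\ets^k\ph}_\infty\prod_{i=1}^{k+n}\norm{h_i}_\ets,
\end{equation*}
after which the dominated-convergence arguments already carried out in the proof of Theorem \ref{regcm} apply verbatim with $\psi_{h_1,\ldots,h_k}$ in place of $\ph$.

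For the estimate \eqref{alstima}, the case $\al_1=k$, $\al_2=k+n\in\Nat\cup\{0\}$ follows at once from the pointwise bound above together with \eqref{stimadk} applied to the intermediate derivatives $j\leq k$. In the general case, write $\al_1=j+\tau$ and $\al_2=j+n+\si$ with $j,n\in\Nat\cup\{0\}$ and $\tau,\si\in[0,1)$. Two endpoint bounds are available: from \eqref{sale}, $\pst:C^{j+\tau}_\ets(X)\to C^{j+\tau}_\ets(X)$ has norm bounded by a constant; from \eqref{dnk} with $k=j$ and $n'=n+1$, estimating the integrand $D^j_\ets\ph(\mts+\cdot)$ through its $\tau$-H\"older seminorm transported by $\uts$, one obtains $\pst:C^{j+\tau}_\ets(X)\to C^{j+n+1+\tau}_\ets(X)$ with norm at most $C(\norm{\Lats}^{n+1}_{\op(\ets;X)}+1)$. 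Interpolating these two mappings at parameter $\theta=(n+\si-\tau)/(n+1)\in[0,1)$, either via real interpolation of H\"older-Zygmund spaces along $\ets$ or directly by applying the elementary inequality $\min(a,b\rho)\leq a^{1-\theta}(b\rho)^\theta$ to the differences of successive derivatives, produces exponent $(n+1)\theta=n+\si-\tau=\al_2-\al_1$ for $\norm{\Lats}$, whence exactly \eqref{alstima}.

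The main obstacle I anticipate is the interpolation step in the fractional case: the H\"older regularity inherited from $\ph$ via \eqref{sale} (contributing no extra factor of $\norm{\Lats}$) must be balanced against the smoothing gained via \eqref{dnk} (one factor of $\norm{\Lats}$ per additional derivative) so that the exponent of $\norm{\Lats}$ in the final bound comes out exactly as $\al_2-\al_1$ rather than the rougher $\lceil\al_2-\al_1\rceil$. Choosing the endpoint pair $(C^{j+\tau}_\ets, C^{j+n+1+\tau}_\ets)$ is precisely what achieves this balance; once the endpoints are correctly identified, the computation is routine.
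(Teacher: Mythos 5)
Your proposal is correct and follows essentially the same route as the paper: \eqref{dnk} is obtained by composing the chain rule \eqref{dk} of Lemma \ref{regck} with the smoothing formula \eqref{derivate} of Theorem \ref{regcm}, and \eqref{alstima} is reduced to the integer-gap case and then completed by interpolation on the H\"older scale along $\ets$ via \eqref{interp}. The only cosmetic difference is your choice of interpolation endpoints $\bigl(C^{\al_1}_{\ets},C^{\al_1+n+1}_{\ets}\bigr)$ versus the paper's $\bigl(C^{\al_1+n}_{\ets},C^{\al_1+n+1}_{\ets}\bigr)$; both yield the exponent $\al_2-\al_1$ for $\norm{\Lats}_{\op(\ets;X)}$.
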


\begin{proof} 
Formula \eqref{dnk} follows applying \eqref{dk} and \eqref{derivate}. Moreover, by \eqref{stimadk} and \eqref{stimadn} there exists $C=C(k,n)>0$ such that
\begin{equation}
\sup_{x\in X}\norm{D_{\ets}^{k+n}(\pst\ph)(x)}_{\op^{k+n}(\ets)}\leq C\norm{\Lats}^n_{\op(\ets;X)}\norm{\ph}_{C^k_{\ets}(X)},\ \ \ph\in C^k_{\ets}(X),
\end{equation}
and for every $\al\in(0,1)$
\begin{equation}
\norm{D_{\ets}^{k+n}(\pst\ph)}_{C^\al_{\ets}(X,\op^{n+k}(\ets))}\leq C\norm{\Lats}^n_{\op(\ets;X)}\norm{\ph}_{C^k_{\ets}(X)},\ \ \ph\in C^{k+\al}_{\ets}(X).\end{equation}
We point out that if $\al_1=\al_2$, then \eqref{alstima} follows from Lemma \ref{regck}; so we may assume $\al_2>\al_1$. Now if $\al_2-\al_1=n\in\Nat$, we set $\al_1=m+\si$, where $m\in\Nat$ and $\si\in(0,1)$. We have 
\begin{align}
&\norm{\pst\ph}_{C^{\al_2}_{\ets}}=\norm{\pst\ph}_\infty+\sum_{j=1}^{m+n}\sup_{x\in X}\norm{D^j_{\ets} (\pst\ph)(x)}_{\op^j(\ets)}+[D_{\ets}^{m+n}\pst\ph]_{C^\si_{\ets}(X)}\nonumber\\
&\leq \norm{\ph}_\infty+\sum_{j=1}^{m}\sup_{x\in X}\norm{D_{\ets}^j \ph(x)}_{\op^j(\ets)}+\sum_{j=1}^{n}C_j\norm{\Lats}^j_{\op(\ets;X)}\sup_{x\in X}\norm{D_{\ets}^m\ph(x)}_{\op^m(\ets)}+\nonumber\\
&+C\norm{\Lats}^n_{\op(\ets;X)}[D_{\ets}^{m}\ph]_{C^\si_{\ets}(X)}
\leq C(\al_1,\al_2)\bigl(\norm{\Lats}_{\op(\ets;X)}^{\al_2-\al_1}+1\bigr)\norm{\ph}_{C^{\al_1}_{\ets}(X)}.\nonumber
\end{align}
If $\al_2-\al_1\notin\Nat$, we set $\al_2=\al_1+n+\si$ with $n\in\Nat\cup\{0\}$ and $\si\in(0,1)$. We apply the following interpolation inequality 
\begin{equation}\label{interp}
\norm{\psi}_{C^{\al_2}_{\ets}(X)}\leq\norm{\psi}_{C^{\al_1+n}_{\ets}(X)}^{1-\si}\norm{\psi}_{C^{\al_1+n+1}_{\ets}(X)}^\si,\ \ \psi\in C^{\al_1+n+1}_{\ets}(X)
\end{equation}
to $\psi=\pst\ph$. Then, due to \eqref{alstima} with $\al_1$ replaced by $\al_1+n$ and $\al_2$ by $\al_1+n+1$, we have \eqref{alstima} in the general case.
\end{proof}

\begin{oss}
The interpolation inequality \eqref{interp} is proved in \cite[Prop. 2.8]{cerlun} in the case $\ets=X$ with equivalent norms. Anyway the proof of the general case is analogous.
\end{oss}

\begin{cor}
We assume that Hypotheses \ref{1} holds and there exists a normed space $(\ets,\norm{\, \cdot\, }_{\ets})$ continuously embedded in $X$ satisfying \eqref{16} and \eqref{17} for all $(s,t)\in\Delta$. Then for every $j\in\Nat$, $h_1,...,h_j\in \ets$, $t\in [0,T]$ and $\ph\in C_{\ets}(X)$ the mapping $$(s,x)\in [0,t)\times X\longmapsto D^j_{\ets}\pst\ph(x)(h_1,...,h_j)\in\R$$ is continuous.

If in addition the mapping $(s,t)\in\Delta\longmapsto\uts\in\op(X)$ is continuous, then the function
\begin{equation}
(s,t,x)\in \Delta\times X\longmapsto D^j_{\ets}(\pst\ph)(x)(h_1,...,h_j)\in\R,
\end{equation}
is continuous.
\end{cor}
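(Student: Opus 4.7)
The plan is to apply the Dominated Convergence Theorem to the explicit representation \eqref{dnk} (equivalently \eqref{derivate} with $k=0$) of $D_{\ets}^j(\pst\ph)(x)(h_1,\dots,h_j)$, in the spirit of Lemma \ref{regck2} and of the proof of \cite[Lemma 2.3]{cerlun}. The obstacle is that both the Gaussian measure $\Ng_{0,Q(t,s)}$ and the integrand depend on the parameters $(s,x)$ (or $(s,t,x)$), so one cannot pass to the limit pointwise in $y$ directly; my idea is to transport all the integrals onto a common probability space.

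On a probability space $(\Omega,\mathcal{F},\mathbb{P})$, let $W$ be a $Y$-valued cylindrical Wiener process and, for $(s,t)\in\overline{\Delta}$, set $Z(s,t):=\int_s^t\utz B(\sigma)\,dW_\sigma$, a centered $X$-valued Gaussian variable with covariance $Q(t,s)$. Then I would rewrite $D_{\ets}^j(\pst\ph)(x)(h_1,\dots,h_j)=\mathbb{E}\bigl[\ph(m^x(t,s)+Z(s,t))\,I_j(t,s)(Z(s,t))(h_1,\dots,h_j)\bigr]$. Convergence as $(s,x)\to(s_0,x_0)\in[0,t)\times X$ reduces to showing $L^1(\Omega)$-convergence of this integrand together with an integrable dominant. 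The $\mathbb{P}$-a.s.\ convergence follows from: continuity of $(s,x)\mapsto m^x(t,s)=\uts x+g(t,s)$, from strong continuity of $U$ and the integral formula defining $g$; $L^2(\Omega;X)$-continuity of $s\mapsto Z(s,t)$, since $\mathbb{E}\norm{Z(s,t)-Z(s_0,t)}_X^2=\abs{\tr Q(t,s)-\tr Q(t,s_0)}$ vanishes by Hypothesis \ref{1}; $L^2(\Omega)$-continuity of the Gaussian coordinates $\rwhat{\uts h_i}(Z(s,t))$, obtained by approximating on finite-dimensional subspaces spanned by an eigenbasis of $Q(t,s_0)$ via the explicit series \eqref{cappucci}; and continuity of the deterministic coefficients $\ix{\Lats h_i}{\Lats h_k}$ on $[0,t)$. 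An integrable dominant is given by $\norm{\ph}_\infty$ times a polynomial in $\abs{\rwhat{\uts h_i}(Z(s,t))}$ and $\abs{\ix{\Lats h_i}{\Lats h_k}}$, each factor bounded uniformly for $s$ in a compact subset of $[0,t)$ by \eqref{stimahcap} and by the local boundedness of $s\mapsto\norm{\Lats}_{\op(\ets;X)}$ on $[0,t)$.

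For the second claim the same machinery applies with $(s,t)$ varying jointly on compact subsets of $\Delta$: $Z(s,t)$ now depends on both endpoints but is still $L^2(\Omega;X)$-continuous, and $m^x(t,s)$ and $\Lats$ remain continuous in $(s,t,x)$ once one assumes $(s,t)\mapsto\uts\in\op(X)$ to be continuous.

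The principal technical hurdle is the continuity in $s$ of the scalar quantities $\ix{\Lats h_i}{\Lats h_k}$, since $\Lats=\Qm{t,s}\uts$ involves the pseudoinverse of a family of operators whose kernel may shift with $s$. By Remark \ref{3bis} one has $\norm{\Lats h}_X=\norm{L^{-1}\uts h}_{L^2((s,t);Y)}$ with $L^{-1}$ the pseudoinverse of the operator defined in \eqref{Lop}; combining this identity with a polarization argument and the variational characterization \eqref{Lpseudo}, so that admissible controls can be extended by zero and compared across shifting time intervals, should yield the required continuity. This is the main departure from the parallel proof in \cite[Lemma 2.3]{cerlun}, which only dealt with continuity of $\pst\ph$ itself rather than of its derivatives along $\ets$.
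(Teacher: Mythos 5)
Your overall strategy (dominated convergence applied to the explicit representation \eqref{derivate}, after transporting all the integrals to a common probability space via the stochastic convolution $Z(s,t)$) is reasonable in outline, but it hinges on exactly the step you defer, and that step is a genuine gap rather than a routine verification. To pass to the limit you need (i) continuity of $s\longmapsto\ix{\Lats h_i}{\Lats h_k}$ and (ii) $L^2(\Omega)$-continuity of $s\longmapsto\rwhat{\uts h_i}(Z(s,t))$; both are equivalent to continuity of $s\longmapsto L_{s,t}^{-1}\uts h$ in $L^2((0,t);Y)$ (controls extended by zero), where $L_{s,t}^{-1}$ is the pseudo-inverse of the operator in \eqref{Lop}. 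The zero-extension/variational argument you invoke only yields one-sided estimates: a minimal control on $(s',t)$ steering to $U(t,s')\bigl(U(s',s)h\bigr)$ extends by zero to an admissible, but in general non-minimal, control on $(s,t)$ for the target $\uts h$, which gives local boundedness of $\norm{\Lats}_{\op(\ets;X)}$ on $[0,t-\ep]$ but says nothing about convergence of the minimizers, hence nothing about continuity of the inner products. Likewise, the series \eqref{cappucci} is written in an eigenbasis of $Q(t,s)$ that itself moves with $s$, so the finite-dimensional approximation cannot be made uniform in $s$ without further work. Nothing in Hypothesis \ref{1} or in \eqref{16}--\eqref{17} gives this continuity for free, and it is precisely where the pseudo-inverse is dangerous, since the kernel and the range of $\Qp{t,s}$ may vary with $s$.

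The paper sidesteps the issue entirely with a short factorization: for $\ep\in(0,t)$ write $\pst\ph=P_{s,t-\ep}P_{t-\ep,t}\ph$ for $0\le s<t-\ep$; by Theorem \ref{regcm} the function $\psi:=P_{t-\ep,t}\ph$ already belongs to $C^{j}_{\ets}(X)$, and then Lemma \ref{regck} and Lemma \ref{regck2} (which involve only $\uts_{|_{\ets}}$ and never $\Lats$) give joint continuity of $(s,x)\longmapsto D^{j}_{\ets}(P_{s,t-\ep}\psi)(x)(h_1,\dots,h_j)$ on $[0,t-\ep]\times X$; letting $\ep\downarrow 0$ covers $[0,t)\times X$, and the same device handles the joint $(s,t,x)$ statement. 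All the singular dependence on the pseudo-inverse is thereby frozen on the fixed interval $(t-\ep,t)$. You should either adopt this factorization or supply a complete proof of the continuity of $s\longmapsto\Lats h$ in $X$; as written, your argument does not establish the corollary.
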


\begin{proof}
The statement about the continuity of the derivatives is a consequence of Lemma \ref{regck} and Theorem \ref{regcm}. Indeed for a fixed $t>0$ and $\ep\in(0,t)$, we have $$\pst\ph=P_{s,t-\ep}P_{t-\ep,t},\ \ 0\leq s< t-\ep\leq T.$$
Since $\psi=P_{t-\ep,t}\ph\in C^k_{\ets}(X)$ by Theorem \ref{regcm}, the function
$$(s,x)\in[0,t-\ep]\times X\longmapsto D_{\ets}^k P_{s,t-\ep}\psi(x)(h_1,...,h_j)=D_{\ets}^k\pst\ph(h)(h_1,...,h_j)$$ is continuous by Lemma \ref{regck}. The proof of the last claim is similar. 
\end{proof}

Finally we state also the last hypothesis that is essential to prove Schauder estimates. 
\begin{ip} \label{3}
\leavevmode
\begin{enumerate}
 \item There exists a normed space $(\ets,\norm{\, \cdot\, }_{\ets})$ continuously embedded in $X$ satisfying \eqref{16} and \eqref{17} for all $(s,t)\in\Delta$.
 \item There exist $C,\theta>0$ such that
\begin{equation}
\norm{\La(t,s)}_{\op(E;X)}\leq \frac{C}{(t-s)^\te},\ \ \mbox{for all}\ (s,t)\in\Delta.
\end{equation}
\end{enumerate}
\end{ip}

We have immediately the following

\begin{cor}\label{blowup}
Let Hypotheses \ref{1} and \ref{3} hold. For every $n\in\Nat$ there exists $K_n>0$ such that
\begin{equation}\label{kn}
\norm{D^n_E\pst\ph(x)}_{\op^n(E)}\leq\frac{K_n}{(t-s)^{n\te}}\norm{\ph}_\infty,\ \ (s,t)\in\Delta,\ \ph\in C_b(X).
\end{equation}
For every $\al\in(0,1)$ and $n\in\Nat$ there exists $K_{n,\al}>0$ such that
\begin{equation}\label{kal}
\norm{D^n_E\pst\ph(x)}_{\op^n(E)}\leq\frac{K_{n,\al}}{(t-s)^{(n-\al)\te}}\norm{\ph}_{C^\al_b(X)},\ \ (s,t)\in\Delta,\ \ph\in C_b(X).
\end{equation}
\end{cor}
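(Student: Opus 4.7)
The plan is to derive both estimates directly from the preceding results of this section, invoking Hypothesis~\ref{3}(2) to replace $\norm{\Lats}_{\op(E;X)}$ by an explicit powerlike factor $(t-s)^{-\te}$.

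For the first estimate \eqref{kn}, I would start from Theorem~\ref{regcm}: for $n\geq 2$, estimate \eqref{stimadn} gives $\sup_{x\in X}\norm{D^n_E(\pst\ph)(x)}_{\op^n(E)}\leq C_n\norm{\Lats}^n_{\op(E;X)}\norm{\ph}_\infty$, while for $n=1$ the analogous inequality with a constant $C_1$ is the statement immediately preceding \eqref{derivate}. Inserting the bound $\norm{\Lats}_{\op(E;X)}\leq C(t-s)^{-\te}$ from Hypothesis~\ref{3}(2) and setting $K_n:=C_n C^n$ yields \eqref{kn} directly.

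For the second estimate \eqref{kal}, I would invoke Corollary~\ref{reg}, more precisely the interpolation inequality \eqref{alstima}, with $\al_1=\al\in(0,1)$ and $\al_2=n\in\Nat$ (which is admissible since $n\geq 1>\al$). This gives
\[
\sup_{x\in X}\norm{D^n_E(\pst\ph)(x)}_{\op^n(E)}\leq \norm{\pst\ph}_{C^n_E(X)}\leq C\bigl(\norm{\Lats}^{n-\al}_{\op(E;X)}+1\bigr)\norm{\ph}_{C^\al_E(X)}.
\]
Since $E$ is continuously embedded in $X$ with some embedding constant $c_E>0$, one has $\norm{h}_X\leq c_E\norm{h}_E$ for all $h\in E$, hence $[\ph]_{C^\al_E(X)}\leq c_E^\al[\ph]_{C^\al_b(X)}$, so $C^\al_b(X)\hookrightarrow C^\al_E(X)$ continuously. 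Plugging Hypothesis~\ref{3}(2) into the right-hand side and using the trivial estimate $1\leq T^{(n-\al)\te}(t-s)^{-(n-\al)\te}$ on $\Delta$ (since $t-s\leq T$) to absorb the additive term into the singular factor, one obtains \eqref{kal} with a suitable constant $K_{n,\al}>0$.

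There is essentially no analytic obstacle: all the hard work has been done in Theorem~\ref{regcm} and Corollary~\ref{reg}, and the only point requiring some care is the continuous embedding $C^\al_b(X)\hookrightarrow C^\al_E(X)$, which is needed so that the right-hand side of \eqref{kal} involves the classical Hölder norm rather than the one along $E$.
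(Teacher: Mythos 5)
Your argument is correct and follows exactly the route the paper intends: the paper's own proof is a one-line reference to Theorem \ref{regcm}, Corollary \ref{reg} and Hypothesis \ref{3}, which is precisely what you have spelled out (\eqref{stimadn} plus the powerlike bound on $\norm{\Lats}_{\op(E;X)}$ for \eqref{kn}, and \eqref{alstima} with $\al_1=\al$, $\al_2=n$ together with $1\leq T^{(n-\al)\te}(t-s)^{-(n-\al)\te}$ for \eqref{kal}). The only point worth noting is that \eqref{kal} is later invoked with the norm $\norm{\cdot}_{C^\al_E(X)}$ on the right-hand side, which is what \eqref{alstima} yields directly and is the stronger form, so your additional embedding step $C^\al_b(X)\hookrightarrow C^\al_E(X)$ is a harmless reconciliation with the literal wording of the corollary rather than a necessary ingredient.
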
 
\begin{proof}
Estimates \eqref{kn} and \eqref{kal} follow from Theorem \ref{regcm} and Corollary \ref{reg} taking into account Hypothesis \ref{3}.
\end{proof}


In the following propositions we give some examples of  Hilbert spaces satisfying \eqref{16} and \eqref{17}.  We start with two preliminary results.

\begin{prop}\label{pseudo} Let $X,X_1,X_2$ be Hilbert spaces and let $L_1:X_1\longrightarrow X$, $L_2:X_2\longrightarrow X$ be linear bounded operators. The following statements hold.
\begin{enumerate}
\item $\re{L_1}\subseteq\re{L_2}$ if and only if there exists a constant $C>0$ such that 
\begin{equation}\label{cstar}
\norm{L_1^\star x}_{X_1}\leq C\norm{L_2^\star x}_{X_2},\ \ x\in X.
\end{equation}
In this case $\norm{L^{-1}_2L_1}_{\op(X_1,X_2)}\leq C$; more precisely
\begin{equation}
\norm{L^{-1}_2L_1}_{\op(X_1,X_2)}=\inf\{C>0\ s.t.\ \eqref{cstar}\ holds\}
\end{equation}
\item If $\norm{L_1^\star x}_{X_1}=\norm{L_2^\star x}_{X_2}$ for every $x\in X$ then $\re{L_1}=\re{L_2}$ and \\
$\norm{L^{-1}_1 x}_{X_1}=\norm{L^{-1}_2 x}_{X_2}$ for every $x\in X$.
\end{enumerate}
\end{prop}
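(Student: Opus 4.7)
The plan is to recognize (1) as a form of Douglas' range inclusion theorem and derive (2) from (1) by symmetry. For the sufficiency in (1), assuming \eqref{cstar}, I will define the operator $T$ on $\re{L_2^\star}\subseteq X_2$ by $T(L_2^\star x):=L_1^\star x$. The key observation is that \eqref{cstar} simultaneously forces $\ker L_2^\star\subseteq\ker L_1^\star$ (making $T$ well defined) and yields $\|T\|\le C$. Extending $T$ by continuity to the closure of $\re{L_2^\star}$, which equals $(\ker L_2)^\perp$, and by zero to $\ker L_2$, I obtain $T\in\op(X_2,X_1)$ with $TL_2^\star=L_1^\star$. Taking adjoints gives $L_1=L_2T^\star$, hence $\re{L_1}\subseteq\re{L_2}$. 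Moreover $T^\star x_1\in(\ker L_2)^\perp$ solves $L_2 y=L_1 x_1$, so by the definition of the pseudo-inverse $T^\star x_1=L_2^{-1}L_1 x_1$, yielding $\|L_2^{-1}L_1\|_{\op(X_1,X_2)}\le C$.

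For the necessity in (1), the inclusion $\re{L_1}\subseteq\re{L_2}$ says that $D:=L_2^{-1}L_1:X_1\to(\ker L_2)^\perp\subseteq X_2$ is defined on all of $X_1$. I will check that $D$ is closed and then invoke the closed graph theorem: if $x_n\to x$ in $X_1$ and $Dx_n\to y$ in $X_2$, continuity of $L_2$ together with $L_1 x_n=L_2 D x_n$ forces $L_2 y=L_1 x$, and the closedness of $(\ker L_2)^\perp$ gives $y\in(\ker L_2)^\perp$, so $y=Dx$. Once $D\in\op(X_1,X_2)$, the factorization $L_1=L_2D$ produces $L_1^\star=D^\star L_2^\star$ and hence the desired inequality with $C=\|D\|=\|L_2^{-1}L_1\|$. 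Combined with the sufficiency bound, this gives exactly the announced infimum identity.

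Finally, (2) follows by applying (1) twice, with $L_1$ and $L_2$ interchanged, using $C=1$ in both directions: this yields $\re{L_1}=\re{L_2}$ together with factorizations $L_1=L_2 D$, $L_2=L_1 E$ and $\|D\|,\|E\|\le 1$. For $x\in\re{L_1}=\re{L_2}$, the element $D(L_1^{-1}x)$ solves $L_2 y=x$ in $X_2$, so the minimum norm property of $L_2^{-1}$ gives $\|L_2^{-1}x\|_{X_2}\le\|D\|\,\|L_1^{-1}x\|_{X_1}\le\|L_1^{-1}x\|_{X_1}$; the reverse inequality is symmetric. The only non-routine step is the closed graph argument in the necessity half of (1); the remaining ingredients are standard adjoint computations and the characterization of the pseudo-inverse as the minimum-norm right-inverse.
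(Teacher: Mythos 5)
Your proof is correct. The paper itself does not prove this proposition but simply cites \cite[Prop.~B.1]{MR3236753}, and your argument is precisely the standard Douglas-factorization proof given there: the operator $T$ with $TL_2^\star=L_1^\star$ for sufficiency, the closed graph theorem applied to $L_2^{-1}L_1$ for necessity, and the minimum-norm characterization of the pseudo-inverse for the norm identities, all of which you handle correctly (including the needed observations that \eqref{cstar} forces $\ker L_2^\star\subseteq\ker L_1^\star$ and that $\re{T^\star}\subseteq(\ker L_2)^\perp$).
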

\begin{proof}
See Proposition B.1 in Appendix B in \cite[pag. 429]{MR3236753}.
\end{proof}

\begin{lem} \label{cmQ}
Under Hypothesis \ref{1}, the following properties hold.
\begin{enumerate}
\item the mapping $s\longmapsto Q(t,s)$ is Lipschitz continuous with values in $\op(X)$ and it is decreasing, namely $$\ix{Q(t,s_1)x}{x}\geq\ix{Q(t,s_2)x}{x} \ \ \ \mbox{for any}\ \ \  0\leq s_1\leq s_2 < t\leq T.$$
\item $\cm_{t,s_2}\subseteq \cm_{t,s_1}$ for every $0\leq s_1<s_2 < t\leq T$ and the norm of the embedding is $\leq$ 1.
\item $\Ker(Q(t,s_1))\subseteq \Ker (Q(t,s_2))\subseteq \Ker(Q(t)) \ \mbox{for any}\ \   0\leq s_1\leq s_2 < t\leq T$.
\end{enumerate}
\end{lem}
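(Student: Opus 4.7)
All three claims rest on the integral representation $Q(t,s)=\int_s^t U(t,r)Q(r)U(t,r)^\star\,dr$ combined with Hypothesis \ref{1}. For (1), I would start from the additive identity $Q(t,s_1)-Q(t,s_2)=\int_{s_1}^{s_2} U(t,r)Q(r)U(t,r)^\star\,dr$ valid for $0\leq s_1\leq s_2<t\leq T$. Bounding the integrand by the uniform estimates $\norm{U(t,r)}_{\op(X)}\leq N$ and $\norm{B(r)}_{\op(Y;X)}\leq K$ provided by Hypothesis \ref{1} and using $Q(r)=B(r)B(r)^\star$, this yields $\norm{Q(t,s_1)-Q(t,s_2)}_{\op(X)}\leq N^2K^2(s_2-s_1)$, which is the Lipschitz continuity in $s$. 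For the monotonicity I pair the same identity with $x\in X$ and rewrite the integrand as a squared norm, obtaining
\[
\ix{(Q(t,s_1)-Q(t,s_2))x}{x}=\int_{s_1}^{s_2}\norm{B(r)^\star U(t,r)^\star x}_Y^2\,dr\geq 0.
\]

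For (2) I would apply Proposition \ref{pseudo}(1) to the self-adjoint operators $L_i:=Q(t,s_i)^{1/2}$ with $X_1=X_2=X$. Self-adjointness gives $L_i^\star=L_i$, and the monotonicity from (1) delivers $\norm{L_1 x}_X^2=\ix{Q(t,s_2)x}{x}\leq\ix{Q(t,s_1)x}{x}=\norm{L_2 x}_X^2$ for every $x\in X$, so the hypothesis of the proposition is satisfied with $C=1$. Its conclusion $\re{L_1}\subseteq\re{L_2}$ is exactly $\cm_{t,s_2}\subseteq\cm_{t,s_1}$, and the quantitative bound $\norm{L_2^{-1}L_1}_{\op(X)}\leq 1$ translates, through the very definition of the Cameron--Martin norms $\norm{h}_{\cm_{t,s_i}}=\norm{Q(t,s_i)^{-1/2}h}_X$, into the claim that the embedding $\cm_{t,s_2}\hookrightarrow\cm_{t,s_1}$ has norm at most one.

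For (3), the first inclusion is an immediate consequence of (1): if $Q(t,s_1)x=0$, then the chain $0=\ix{Q(t,s_1)x}{x}\geq\ix{Q(t,s_2)x}{x}\geq 0$ forces $\ix{Q(t,s_2)x}{x}=0$, and since $Q(t,s_2)$ is nonnegative and self-adjoint this gives $Q(t,s_2)x=0$. The main obstacle is the second inclusion. From $Q(t,s_2)x=0$ the same representation used in (1) only yields $B(r)^\star U(t,r)^\star x=0$ for almost every $r\in(s_2,t)$, and to deduce $B(t)^\star x=0$ (equivalently $x\in\Ker Q(t)$) one must pass to the limit as $r\to t^-$. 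My plan here is to combine strong continuity of $r\mapsto U(t,r)^\star x$ at $r=t$ with a Lebesgue-point argument for the strongly measurable map $r\mapsto B(r)^\star x$, selecting a sequence $r_n\to t^-$ at which $B(r_n)^\star U(t,r_n)^\star x=0$ and passing to the limit to recover $B(t)^\star x=0$; this limiting step is the only part of the lemma that requires more than the bookkeeping used for (1) and (2).
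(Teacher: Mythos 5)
Parts (1) and (2) of your argument are correct and coincide with the paper's proof: the Lipschitz bound and the monotonicity come from the integral representation exactly as you describe, and the embedding $\cm_{t,s_2}\subseteq\cm_{t,s_1}$ with norm $\leq 1$ follows from Proposition \ref{pseudo} applied to the square roots via $\norm{\Qp{t,s}x}_X^2=\ix{Q(t,s)x}{x}$ (only note that your labels are swapped: with $L_i=Q(t,s_i)^{1/2}$ you need $\norm{L_2x}_X\leq\norm{L_1x}_X$ in order to conclude $\re{L_2}\subseteq\re{L_1}$). The first inclusion in (3) is also fine; your route through the operator inequality of (1) is a harmless variant of the paper's, which instead reuses the fact that $\Qp{r}U(t,r)^\star x=0$ for a.e.\ $r\in(s_1,t)\supseteq(s_2,t)$.

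The genuine gap is in the second inclusion of (3), precisely the step you leave as a plan, and both of its ingredients are problematic. First, Hypothesis \ref{1} gives strong continuity of $(s,t)\mapsto U(t,s)x$, not of the adjoint: from it one only gets $U(t,r)^\star x\rightharpoonup x$ weakly as $r\to t^-$, whereas your estimate $\norm{B(r)^\star x}_Y\leq K\norm{x-U(t,r)^\star x}_X$ needs strong convergence. The paper sidesteps this by pairing with a fixed $y\in X$ and moving every operator to the other side, $0=\ix{\Qp{r}U(t,r)^\star x}{y}=\ix{x}{U(t,r)\Qp{r}y}$ for a.e.\ $r\in(s_2,t)$, and then sending $r_n\to t$ inside the full-measure set. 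Second, and more seriously, a Lebesgue-point argument for the bounded, strongly measurable map $r\mapsto B(r)^\star x$ cannot determine its value at the single endpoint $r=t$: Lebesgue's theorem only guarantees that almost every point is a Lebesgue point, nothing forces $t$ to be one, and the a.e.\ identity $B(r)^\star U(t,r)^\star x=0$ on $(s_2,t)$ carries no information about $B(t)^\star x$ without some continuity of $B(\cdot)^\star$ (equivalently of $r\mapsto \Qp{r}y$) at $r=t$. Indeed, taking $U\equiv I$, $B(r)=0$ for $r<t$ and $B(t)$ injective gives $\Ker(Q(t,s_2))=X$ but $\Ker(Q(t))=\{0\}$, so no argument using only measurability can succeed. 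This missing continuity is exactly what the paper uses tacitly when it passes to the limit $\ix{x}{U(t,r_n)\Qp{r_n}y}\to\ix{x}{\Qp{t}y}$; your sketch does not supply it, so the step as planned would fail.
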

\begin{proof}
\leavevmode
\begin{enumerate}
\item Let $0\leq s_1\leq s_2 < t\leq T$ and $x\in X$. We have

\begin{align}
\norm{Q(t,s_1)x-Q(t,s_2)x}_X= \norm{\int_{s_1}^{s_2}U(t,r) Q(r) U(t,r)^\star  x dr}_X\leq M^2 K^2\norm{x}_X \abs{s_2-s_1} \nonumber
\end{align}
so that
\begin{align}
\norm{Q(t,s_1)-Q(t,s_2)}_{\op(X)}\leq N^2 K^2 \abs{s_2-s_1}, \nonumber
\end{align}
where $N$ and $K$ are the constants defined in Hypothesis \ref{1} and in \eqref{bshyp}.
Moreover
\begin{align}
\ix{Q(t,s_2) x}{x}&=\int_{s_2}^t \ix{U(t,r)\Qp{r}\Qp{r}U(t,r)^\star x}{x}\,dr=\nonumber \\
&=\int_{s_2}^t\norm{\Qp{r}U(t,r)^\star x}^2_X\,dr\leq\nonumber \\
&\leq\int_{s_1}^t\norm{\Qp{r}U(t,r)^\star x}^2_X\,dr=\ix{Q(t,s_1) x}{x}.\nonumber
\end{align}
\item The continuous embedding $\cm_{t,s_2}\subseteq\cm_{t,s_1}$ is an immediate consequence of proposition \ref{pseudo} and statement 1, observing that $\norm{\Qp{t,s} x}_X^2=\ix{Q(t,s) x}{x}$.
\item Let $x\in \Ker(Q(t,s_1))$. Since $$\displaystyle{\ix{Q(t,s_1) x}{x}=\int_{s_1}^t\norm{\Qp{r}U(t,r)^\star x}^2_X\,dr},$$ we obtain $\Qp{r}U(t,r)^\star x=0$ for almost every $r\in(s_1,t)$, so that $x\in \Ker(Q(t,s_2))$.

Let $x\in \Ker(Q(t,s_2))$. Then $\Qp{r}U(t,r)^\star x=0$ for almost every $r\in(s_2,t)$ and for $y\in X$ we have
\begin{align}\label{quasi ovunque}
0=\ix{\Qp{r}U(t,r)^\star x}{y}=\ix{x}{U(t,r)\Qp{r}y}, \ \ \mbox{a.e.}\ s_2<r<t.\nonumber
\end{align}
Letting $(r_n)_{n\in\Nat}\subset \{r\in[0,T]\ |\ s_2<r<t\ \mbox{and}\ \eqref{quasi ovunque}\ \mbox{holds}\}$ be such that $r_n\longrightarrow t$, we obtain 
\begin{align}
0=\ix{x}{\Qp{t}y}=\ix{\Qp{t}x}{y},\ \ \ \mbox{for all}\ \ y\in X.
\end{align}
Hence $\Qp{t}x=0$ and since $\Ker(Q(t))=\Ker(\Qp{t})$, the statement holds.
\end{enumerate}
\end{proof}

Here we extend some results of \cite[Sect.\,3]{MR1976297} to the non autonomous case.

\begin{prop}\label{cute} Let Hypothesis \ref{1} hold. We assume that for a fixed $s\in[0,T)$ there exists a normed space $E\subseteq X$ with continuous embedding and that $\uts(E)\subseteq \ac_t$ for every $t\in(s,T]$  
and that there exist $M>0$ and $\al\in\bigl[0,\frac{1}{2}\bigr)$ such that 
\begin{equation} \label{stimaesp2}
\norm{\uts}_{\op(E,\ac_t)}\leq \frac{M}{(t-s)^\al},\ \ t\in(s,T].
\end{equation}
Then for every $t\in(s,T]$, $\uts(E)\subseteq\cm_{t,s}$ and
{\small \begin{align}\label{van}
\norm{\uts}_{\op(E,\cm_{t,s})}=\norm{\Qm{t,s}\uts}_{\op(E,X)}\leq \frac{M}{(t-s)^{\frac{1}{2}+\al}}.
\end{align}}
\end{prop}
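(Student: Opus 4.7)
The plan is to reduce \eqref{van} to an explicit $L^2$--bound on a ``control'' that steers $0$ to $U(t,s)y$. Define
\[
L:L^2\bigl((s,t);X\bigr)\longrightarrow X,\qquad L\eta=\int_s^tU(t,r)\Qp{r}\eta(r)\,dr,
\]
the analogue of the operator of Remark \ref{3bis} with $B(\si)$ replaced by $\Qp{\si}$. A direct computation gives $(L^\star x)(r)=\Qp{r}U(t,r)^\star x$, whence
\[
\norm{L^\star x}_{L^2((s,t);X)}^2=\int_s^t\ix{Q(r)U(t,r)^\star x}{U(t,r)^\star x}\,dr=\ix{Q(t,s)x}{x}=\norm{\Qp{t,s}x}_X^2
\]
for every $x\in X$. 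Proposition \ref{pseudo}(2) applied with $L_1=L$ and $L_2=\Qp{t,s}$ then gives $\re{L}=\re{\Qp{t,s}}=\cm_{t,s}$ and, denoting by $L^{-1}$ the pseudo-inverse,
\[
\norm{\Qm{t,s}z}_X=\norm{L^{-1}z}_{L^2((s,t);X)}\leq\norm{\eta}_{L^2((s,t);X)}\qquad\text{whenever }L\eta=z\in\cm_{t,s}.
\]
So it suffices to exhibit a preimage of $U(t,s)y$ through $L$ with a suitable $L^2$-bound.

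The preimage is produced by telescoping. For $y\in E$ and $r\in(s,t]$, assumption \eqref{stimaesp2} applied at the time pair $(s,r)$ yields $v(r):=\Qm{r}U(r,s)y\in X$ with $\Qp{r}v(r)=U(r,s)y$ and
\[
\norm{v(r)}_X\leq\frac{M}{(r-s)^\al}\norm{y}_E.
\]
Setting $\eta(r):=(t-s)^{-1}v(r)$, the evolution law $U(t,r)U(r,s)=U(t,s)$ gives
\[
L\eta=\frac{1}{t-s}\int_s^tU(t,r)\Qp{r}v(r)\,dr=\frac{1}{t-s}\int_s^tU(t,s)y\,dr=U(t,s)y.
\]
Since $2\al<1$, one computes
\[
\norm{\eta}_{L^2((s,t);X)}^2\leq\frac{M^2\norm{y}_E^2}{(t-s)^2}\int_s^t(r-s)^{-2\al}\,dr=\frac{M^2}{(1-2\al)(t-s)^{1+2\al}}\norm{y}_E^2,
\]
so $U(t,s)y\in\cm_{t,s}$ and $\norm{\Qm{t,s}U(t,s)y}_X\leq M(1-2\al)^{-\frac{1}{2}}(t-s)^{-\frac{1}{2}-\al}\norm{y}_E$, which is \eqref{van} after absorbing the factor $(1-2\al)^{-1/2}$ into the constant.

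The only non-routine step is the strong measurability of $r\mapsto v(r)=\Qm{r}U(r,s)y$, which is needed to make $\eta$ an admissible element of $L^2((s,t);X)$ in the Bochner sense. I would handle it in the standard way, exploiting the measurability of $r\mapsto Q(r)$ granted by Hypothesis \ref{1}(2) and the continuity of $r\mapsto U(r,s)y$: realise $v(r)$ as the $X$-limit of $(\ep I+Q(r))^{-1}\Qp{r}U(r,s)y$ as $\ep\downarrow 0$, each term being strongly measurable in $r$, and then invoke Pettis' theorem, the uniform bound $M(r-s)^{-\al}\norm{y}_E$ being square-integrable on $(s,t)$ thanks to $2\al<1$.
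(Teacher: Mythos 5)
Your proof is correct and follows essentially the same route as the paper: both reduce \eqref{van} to the control-theoretic identity $\re{L}=\re{\Qp{t,s}}$ for $L\eta=\int_s^tU(t,r)\Qp{r}\eta(r)\,dr$ and then exhibit the preimage $\eta(r)=(t-s)^{-1}\Qm{r}U(r,s)y$ of $U(t,s)y$, obtaining the same $L^2$-bound via $2\al<1$ (and the same constant $(1-2\al)^{-1/2}$ to be absorbed). Your closing remark on the strong measurability of $r\mapsto\Qm{r}U(r,s)y$ addresses a point the paper simply asserts, so it is a welcome addition rather than a deviation.
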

\begin{proof}
We consider the operator $L$ from remark \ref{3bis} with $\Qm{\si}$ instead of $B(\si)$. We know that $\re{L}=\re{\Qp{t,s}}$. On the other hand, for every $x\in E$ we have 
\begin{align}
(t-s)\uts x&=\int_s^t\uts x\,d\si=\int_s^t\utz U(\si,s)x\,d\si= \nonumber \\
&=\int_s^t\utz \Qp{\si}\Qm{\si}U(\si,s)x\,d\si.
\end{align}
where $y(\si):=\Qm{\si}U(\si,s)x$ belongs to $L^2\bigl((s,t);X\bigr)$. Hence $\uts x $ belongs to the range of $L$, and since $\al\in\bigl[0,\frac{1}{2}\bigr)$ by \eqref{Lpseudo} we get 
\begin{align}
\norm{\Qm{t,s}((t-s)\uts x)}_X&\leq\norm{y}_{L^2((s,t);X)}\leq \biggl(\int_s^t\norm{\Qm{\si} U(\si, s)x}^2_X\,d\si\biggr)^{\frac{1}{2}}\leq \nonumber \\
&\leq \biggl(\int_s^t\frac{M^2}{(\si-s)^{2\al}} \norm{x}_{E}^2\,d\si\biggr)^{\frac{1}{2}}\leq (t-s)^{\frac{1}{2}-\al} \frac{M}{(1-2\al)^{\frac{1}{2}}} \norm{x}_{E},\nonumber
\end{align}
which yields \eqref{van}.
\end{proof}

\begin{thm}\label{HS} Let Hypothesis \ref{1} hold. We assume that $\uts(\ac_s)\subseteq \ac_t$ for any $(s,t)\in\Delta$ 
and that there exist $M>0$ and $\al\in\bigl[0,\frac{1}{2}\bigr)$ such that 
\begin{equation} \label{stimaesp}
\norm{\uts}_{\op(\ac_s,\ac_t)}\leq \frac{M}{(t-s)^\al},\ \ (s,t)\in\Delta.
\end{equation}
Then the following statements hold.
\begin{enumerate}
\item For every $0\leq s < t\leq T$, $\uts(\ac_s)\subseteq\cm_{t,s}$ and
{\small \begin{align}
\norm{\uts}_{\op(\ac_s,\cm_{t,s})}=\norm{\Qm{t,s}\uts}_{\op(\ac_s,X)}\leq \frac{M}{(t-s)^{\frac{1}{2}-\al}}.
\end{align}}
\item For every $0\leq s < t\leq T$, $\uts^\star \bigl(\ker(Q(t))\bigr)\subseteq \Ker\bigl(Q(s)\bigr)$ and $\Ker\bigl(Q(t,s)\bigr)=\Ker\bigl(Q(t)\bigr)$.
\item For every $0\leq s_1<s_2 < t\leq T$, $\cm_{t,s_1}=\cm_{t,s_2}$ and their norms are equivalent.
\item For every $0\leq s < t\leq T$, $\cm_{t,s}$ is continuously embedded in $\ac_t$.
\end{enumerate}
\end{thm}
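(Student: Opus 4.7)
The plan is to prove the four statements in the order (1), (2), (4), (3), since (3) depends on both (1) and (4). Item (1) is an immediate application of Proposition~\ref{cute} with $E = \ac_s$: the hypothesis of Theorem~\ref{HS} is exactly what Proposition~\ref{cute} requires for this choice of $E$, and it yields $U(t,s)(\ac_s) \subseteq \cm_{t,s}$ together with a power-law bound on $\norm{\Qm{t,s} U(t,s)}_{\op(\ac_s, X)}$. For item~(2) I rewrite the hypothesis $U(t,s)(\ac_s) \subseteq \ac_t$ as $\re{U(t,s)\Qp{s}} \subseteq \re{\Qp{t}}$ and invoke Proposition~\ref{pseudo}(1) with $L_1 = U(t,s)\Qp{s}$, $L_2 = \Qp{t}$, producing a constant $C>0$ with $\norm{\Qp{s} U(t,s)^\star x}_X \leq C \norm{\Qp{t} x}_X$ for every $x \in X$. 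Specialising to $x \in \Ker Q(t) = \Ker \Qp{t}$ yields $U(t,s)^\star x \in \Ker \Qp{s} = \Ker Q(s)$; substituting this into $Q(t,s) x = \int_s^t U(t,r) \Qp{r}\Qp{r} U(t,r)^\star x \, dr$ then shows $\Ker Q(t) \subseteq \Ker Q(t,s)$, the opposite inclusion being supplied by Lemma~\ref{cmQ}(3).

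For item~(4), the crucial step is to factorise $Q(t,s)$ through $\Qp{t}$. Applying Proposition~\ref{pseudo}(1) in the same way produces, for each $r \in [s,t)$, an operator $g(t,r) := \Qm{t} U(t,r)\Qp{r} \in \op(X)$ such that $U(t,r)\Qp{r} = \Qp{t}\, g(t,r)$ and $\norm{g(t,r)}_{\op(X)} \leq M/(t-r)^\al$. Inserting this into the integral expression for $Q(t,s)$ yields
$$Q(t,s) = \Qp{t}\, K(t,s)\, \Qp{t}, \qquad K(t,s) := \int_s^t g(t,r)\, g(t,r)^\star\, dr \in \op(X),$$
the integral being norm-convergent thanks to $\al < 1/2$ and bounded by $M^2(t-s)^{1-2\al}/(1-2\al)$. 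Since $K(t,s)$ is self-adjoint and non-negative, $K(t,s)^{1/2}$ is well defined, and a direct computation gives $\norm{\Qp{t,s} x}_X^2 = \ix{Q(t,s)x}{x} = \norm{K(t,s)^{1/2}\Qp{t} x}_X^2$. Proposition~\ref{pseudo}(2) applied to $\Qp{t,s}$ and $\Qp{t} K(t,s)^{1/2}$ then identifies $\re{\Qp{t,s}} = \re{\Qp{t} K(t,s)^{1/2}} \subseteq \re{\Qp{t}}$, which is exactly the continuous embedding $\cm_{t,s} \hookrightarrow \ac_t$ with embedding constant $\leq \norm{K(t,s)}_{\op(X)}^{1/2}$.

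For item~(3), Lemma~\ref{cmQ}(2) already supplies $\cm_{t,s_2} \subseteq \cm_{t,s_1}$ with embedding norm $\leq 1$. For the reverse inclusion I use the range representation from Remark~\ref{3bis} (with $\Qp{\sigma}$ in place of $B(\sigma)$): $\cm_{t,s_1} = \re{L}$ where $L : L^2((s_1,t);X) \to X$, $Ly = \int_{s_1}^t U(t,\sigma) \Qp{\sigma} y(\sigma)\,d\sigma$. Given $h \in \cm_{t,s_1}$ I choose $y$ with $Ly = h$ and split the integral at $s_2$:
$$h = U(t,s_2)\, z + \int_{s_2}^t U(t,\sigma)\Qp{\sigma} y(\sigma)\, d\sigma, \qquad z := \int_{s_1}^{s_2} U(s_2,\sigma)\Qp{\sigma} y(\sigma)\, d\sigma.$$
The second summand is in $\cm_{t,s_2}$ by construction; the element $z$ lies in $\cm_{s_2, s_1}$, hence in $\ac_{s_2}$ by item~(4) applied with $t$ replaced by $s_2$, whence $U(t,s_2)\, z \in \cm_{t,s_2}$ by item~(1). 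Chaining the resulting norm estimates via \eqref{Lpseudo} and taking the infimum over admissible $y$ gives a constant $C$ depending only on $M$, $\al$, $t-s_2$ and $s_2 - s_1$ such that $\norm{h}_{\cm_{t,s_2}} \leq C \norm{h}_{\cm_{t,s_1}}$. The main technical step of the entire argument is the factorisation $Q(t,s) = \Qp{t} K(t,s) \Qp{t}$ from item~(4); once this is in hand, Proposition~\ref{pseudo} reduces the rest to routine manipulations with pseudo-inverses.
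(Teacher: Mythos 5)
Your proposal is correct, and for the two substantive items --- (3) and (4) --- it takes a genuinely different route from the paper. Items (1) and (2) match the paper's argument in substance: (1) is indeed just Proposition \ref{cute} with $E=\ac_s$, and for (2) the paper reaches $\uts^\star(\Ker Q(t))\subseteq\Ker Q(s)$ via orthogonal projections onto $\Ker Q(t)$ and the closure $\overline{\ac_t}$, whereas you get the same inclusion from Proposition \ref{pseudo}(1) applied to $L_1=\uts\Qp{s}$, $L_2=\Qp{t}$ --- a cosmetic difference. For (4), the paper first proves the intertwining identity $(\uts_{|_{\ac_s}})^\star Q(t)x=Q(s)\uts^\star x$ and estimates $\int_s^t\norm{\Qp{r}U(t,r)^\star x}_X^2\,dr$ directly; your factorisation $U(t,r)\Qp{r}=\Qp{t}\,g(t,r)$ with $g(t,r)=\Qm{t}U(t,r)\Qp{r}$, giving $Q(t,s)=\Qp{t}K(t,s)\Qp{t}$ and hence $\norm{\Qp{t,s}x}_X=\norm{K(t,s)^{1/2}\Qp{t}x}_X$, is an equivalent but more structural way of packaging the same information, and it hands you the embedding constant $\norm{K(t,s)}_{\op(X)}^{1/2}\leq M(t-s)^{\frac{1}{2}-\al}(1-2\al)^{-\frac{1}{2}}$ for free. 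The real divergence is item (3): the paper runs a lengthy recursion, repeatedly translating the integration variable and covering $(s_2,2s_2-s_1]$ by the intervals $\bigl(\frac{(k+2)s_2-s_1}{k+1},\frac{(k+1)s_2-s_1}{k}\bigr]$, with a constant $1+(k+1)M^2$ depending on where $t$ falls; your argument instead exploits the control-theoretic representation $\cm_{t,s_1}=\re{L}$ of Remark \ref{3bis}, splits the minimal-norm control at $s_2$, and bootstraps through $\cm_{s_2,s_1}\hookrightarrow\ac_{s_2}$ (item (4)) and $U(t,s_2)(\ac_{s_2})\subseteq\cm_{t,s_2}$ (item (1)). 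This is considerably shorter and conceptually transparent, at the price of a constant that degenerates as $t\to s_2^+$ like $(t-s_2)^{-\frac{1}{2}-\al}$ --- which is harmless for the statement as given, since $s_1,s_2,t$ are fixed.

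Two small points you should spell out. First, in item (2) the conclusion $Q(t,s)x=0$ for $x\in\Ker Q(t)$ requires $U(t,r)^\star x\in\Ker Q(r)$ for \emph{every} $r\in(s,t)$, i.e.\ the first part of (2) applied to each pair $(r,t)$, not only to $(s,t)$; this is available since the hypothesis holds on all of $\Delta$, but "substituting this into the integral" elides it. Second, the operator-valued integral $K(t,s)=\int_s^t g(t,r)g(t,r)^\star\,dr$ should be defined weakly, via $\ix{K(t,s)u}{v}=\int_s^t\ix{g(t,r)^\star u}{g(t,r)^\star v}\,dr$; the integrand is dominated by $M^2(t-r)^{-2\al}\norm{u}_X\norm{v}_X$ with $2\al<1$, and its measurability reduces by polarisation and the identity $g(t,r)^\star\Qp{t}=\Qp{r}U(t,r)^\star$ to that of $r\mapsto\ix{Q(r)U(t,r)^\star w}{U(t,r)^\star w}$, which the paper already uses throughout. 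Neither point affects the validity of the argument.
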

\begin{proof}
\leavevmode
\begin{enumerate}
\item Apply Proposition \ref{cute} with $E=\ac_s$.
\item We first remark some facts.
\begin{enumerate}
\item Since $\uts(\ac_s)\subseteq \ac_t$, by continuity we get $\uts(\overline{\ac_s})\subseteq \overline{\ac_t}$.
\item $\Ker(\Qp{t})=\Ker(Q(t))$, $\bigl(\Ker(Q(t))\bigr)^{\perp}=\overline{\ac_t}$ and consequently $\overline{\ac_t}=(I-P_t)(X)$, where we denote by $P_t$ the orthogonal projection on $\Ker(Q(t))$.
\item By statements (a) and (b) we have $\uts^\star (\ker(Q(t)))\subseteq \Ker(Q(s))$.
In fact
\begin{align}
\uts(\overline{\ac_s})\subseteq \overline{\ac_t}&\iff \uts(I-P_s)(X)\subseteq(I-P_t)(X)\nonumber \\
&\iff P_t \uts(I-P_s)= 0. \nonumber
\end{align}
Hence 
\begin{align}
(I-P_s)^\star \uts^\star P_t^\star=(I-P_s) \uts^\star P_t=0,
\end{align}
namely $\uts^\star (\ker(Q(t)))\subseteq \Ker(Q(s))$.
\end{enumerate}
 For every $x\in\Ker(Q(t))$, we have $U(t,r)^\star x\in\Ker\bigl(Q(r)\bigr)=\Ker\bigl(\Qp{r}\bigr)$ for every $r\in(s,t)$ and therefore $$\norm{\Qp{t,s} x}_X^2=\ix{Q(t,s) x}{x}=\int_{s}^t\norm{\Qp{r}U(t,r)^\star x}^2_X\,dr,$$ so that  $x\in\Ker\bigl(Q(t,s)\bigr)$. Conversely, the inclusion of $\Ker\bigl(Q(t,s)\bigr)$ in $\Ker(Q(t))$ follows from statement 3 of lemma \ref{cmQ}.
\item The continuous embedding $\cm_{t,s_2}\subseteq\cm_{t,s_1}$ is statement 2 of lemma \ref{cmQ}.
Concerning the reverse embedding, we first point out that the adjoint of the operator $\uts_{|_{\ac_s}}:\ac_s\longrightarrow \ac_t$ is the operator $(\uts_{|_{\ac_s}})^\star:\ac_t\longrightarrow \ac_s$ such that $$\iprod{x}{(\uts_{|_{\ac_s}})^\star y}_{\ac_s}=\iprod{\uts_{|_{\ac_s}}x}{y}_{\ac_t},\ \ \ \mbox{for all}\ x\in \ac_s,\ y\in \ac_t.$$ Now we claim that 
\begin{equation}\label{scambio}
(U(t,s)_{|_{\ac_s}})^\star Q(t) x=Q(s)\uts^\star  x,\ \ x\in X,\ (s,t)\in\Delta,
\end{equation}
where $\uts^\star $ denotes the adjoint operator of $U(t,s)\in\op(X)$. Indeed for all $h\in \ac_s$ we  have $(I-P_s)h=h$, $(I-P_t)U(t,s)h=U(t,s)h$ and
\begin{align}
\iprod{(U(t,s)_{|_{\ac_s}})^\star Q(t) x}{h}_{\ac_s}&=\iprod{Q(t) x}{U(t,s)_{|_{\ac_s}}h}_{\ac_t}=\ix{(I-P_t)x}{U(t,s)h}\nonumber \\
&=\ix{x}{(I-P_t)U(t,s)h}=\ix{\uts^\star x}{h}\nonumber\\
&=\ix{\uts^\star x}{(I-P_s)h}=\iprod{Q(s)\uts^\star x}{h}_{\ac_s}\nonumber\\
&=\iprod{Q(s)\uts^\star  x}{h}_{\ac_s}, \nonumber
\end{align}
where the last equality follows from statement 2.

Hence we get for $0\leq s_1<s_2<t\leq T$
{\small\begin{align}
\norm{\Qp{t,s_1} x}_X^2&=\int_{s_1}^t\norm{\Qp{r}U(t,r)^\star x}^2_X\,dr\nonumber \\
&=\int_{s_1}^{s_2}\norm{\Qp{r}U(t,r)^\star x}^2_X\,dr+\int_{s_2}^t\norm{\Qp{r}U(t,r)^\star x}^2_X\,dr \nonumber \\
&=\int_{s_2}^{2s_2-s_1}\norm{\Qp{\si_0+s_1-s_2}U(t,\si_0+s_1-s_2)^\star x}^2_X\,d\si_0+\norm{\Qp{t,s_2} x}_X^2, \nonumber
\end{align}}
where $\si_0=r-s_1+s_2$ in the first integral. 

If $t>2s_2-s_1$, by \eqref{scambio} we have
\begin{align}
&\int_{s_2}^{2s_2-s_1}\norm{\Qp{\si_0+s_1-s_2}U(t,\si_0+s_1-s_2)^\star x}^2_X\,d\si_0\nonumber \\
&\leq\int_{s_2}^{t}\norm{\Qp{\si_0+s_1-s_2}U(t,\si_0+s_1-s_2)^\star x}^2_X\,d\si_0\nonumber \\
&=\int_{s_2}^{t}\norm{Q(\si_0+s_1-s_2)U(\si_0,\si_0+s_1-s_2)^\star U(t,\si_0)^\star x}^2_{H_{\si_0+s_1-s_2}}\,d\si_0\nonumber \\
&=\int_{s_2}^{t}\norm{(U(\si_0,\si_0+s_1-s_2)_{|_{H_{\si_0+s_1-s_2}}})^\star Q(\si_0)U(t,\si_0)^\star x}^2_{H_{\si_0+s_1-s_2}}\,d\si_0\nonumber \\
&\leq M^2 \int_{s_2}^{t}\norm{ Q(\si_0)U(t,\si_0)^\star x}^2_{H_{\si_0+s_1-s_2}}\,d\si_0\nonumber \\
&= M^2 \int_{s_2}^{t}\norm{ \Qp{\si_0}U(t,\si_0)^\star x}^2_X\,d\si_0= M^2 \norm{\Qp{t,s_2}x}^2_X.\nonumber
\end{align}
Hence 
\begin{equation}\label{stimasi1}
\norm{\Qp{t,s_1} x}_X^2\leq (1+ M^2 )\norm{\Qp{t,s_2}x}^2_X. 
\end{equation}
If $s_2<t\leq 2s_2-s_1$, by \eqref{stimasi1} we have
{\small \begin{align}
&\int_{s_2}^{2s_2-s_1}\norm{\Qp{\si_0+s_1-s_2}U(t,\si_0+s_1-s_2)^\star x}^2_X\,d\si_0\nonumber \\
&=\int_{s_2}^t+\int_t^{2s_2-s_1}\norm{\Qp{\si_0+s_1-s_2}U(t,\si_0+s_1-s_2)^\star x}^2_X\,d\si_0\nonumber\\
&\leq M^2 \norm{\Qp{t,s_2}x}^2_X
+\int_{s_2}^{3s_2-s_1-t}\norm{\Qp{\si_1+t+s_1-2s_2}U(t,\si_1+t+s_1-2s_2)^\star x}^2_X\,d\si_1, \nonumber
\end{align}}
where $\si_1=\si_0-t+s_2$.

If $t>3s_2-s_1-t\iff t>\frac{3s_2-s_1}{2}$, hence for any $t\in \Bigl(\frac{3s_2-s_1}{2},2s_2-s_1\Bigr]$ we have
\begin{align}
&\int_{s_2}^{3s_2-s_1-t}\norm{\Qp{\si_1+t+s_1-2s_2}U(t,\si_1+t+s_1-2s_2)^\star x}^2_X\,d\si_0\nonumber \\
&\leq\int_{s_2}^{t}\norm{\Qp{\si_1+t+s_1-2s_2}U(t,\si_1+t+s_1-2s_2)^\star x}^2_X\,d\si_1 \nonumber
\end{align}
and by \eqref{scambio}
\begin{align}
&\int_{s_2}^{t}\norm{\Qp{\si_1+t+s_1-2s_2}U(t,\si_1+t+s_1-2s_2)^\star x}^2_X\,d\si_1\nonumber \\
&=\int_{s_2}^{t}\norm{Q(\si_1+t+s_1-2s_2)U(\si_1,\si_1+t+s_1-2s_2)^\star U(t,\si_1)^\star x}^2_{H_{\si_1+t+s_1-2s_2}}\,d\si_1\nonumber \\
&=\int_{s_2}^{t}\norm{(U(\si_1,\si_1+t+s_1-2s_2)_{H_{\si_1+t+s_1-2s_2}})^\star Q(\si_1)U^\star(t,\si_1)x}^2_{H_{\si_1+t+s_1-2s_2}}\,d\si_1\nonumber \\
&\leq M^2 \norm{\Qp{t,s_2}}^2_X. \nonumber
\end{align}
Adding up,
\begin{equation} \label{primostepA}
\norm{\Qp{t,s_1} x}_X^2\leq (1+ 2M^2)\norm{\Qp{t,s_2}x}^2_X. 
\end{equation}

Instead, if $\displaystyle{s_2<t\leq \frac{3s_2-s_1}{2}}$
{\small \begin{align}\label{primostepB}
&\int_{s_2}^{3s_2-s_1-t}\norm{\Qup(\si_1+t+s_1-2s_2)U^\star(t,\si_1+t+s_1-2s_2)x}^2_X\,d\si_1\nonumber \\
&=\int_{s_2}^t+\int_t^{3s_2-s_1-t}\norm{\Qup(\si_1+t+s_1-2s_2)U^\star(t,\si_1+t+s_1-2s_2)x}^2_X\,d\si_1\nonumber\\
&\leq \int_{s_2}^{4s_2-s_1-2t}\norm{\Qup(\si_2+2t+s_1-3s_2)U^\star(t,\si_2+2t+s_1-3s_2)x}^2_X\,d\si_2+M^2 \norm{\Qp{t,s_2}x}^2_X, 
\end{align}}
where $\si_2=\si_1-t+s_2$.

Let $k\in\Nat$. Now we prove the following. 
\begin{itemize}
\item If $t\in\Bigl(\frac{(k+2)s_2-s_1}{k+1},\frac{(k+1)s_2-s_1}{k}\Bigr]$, then
\begin{align}\label{AI}
\norm{\Qp{t,s_1} x}_X^2\leq \bigl[1+(k+1) M^2 \bigr]\norm{\Qp{t,s_2}x}^2_X. 
\end{align}
\item If $t\in\Bigl(s_2,\frac{(k+2)s_2-s_1}{k+1}\Bigl]$, then
\begin{align} \label{BI}
&\int_{s_2}^{s_k}\norm{\Qup\bigl(t_k\bigr)U^{\star}\bigl(t,t_k\bigr)}^2_X\,d\si_k\leq \int_{s_2}^{s_{k+1}}\norm{\Qup\bigl(t_{k+1}\bigr)U^{\star}\bigl(t,t_{k+1}\bigr)}^2_X\,d\si_{k+1}\nonumber\\
&+M^2\norm{\Qp{t,s_2}}^2_X, 
\end{align}
where 
\begin{align}
&s_k=(k+2)s_2-s_1-kt, \nonumber \\
&t_k=\si_k+kt+s_1-(k+1)s_2, \nonumber \\
&\si_{k+1}=\si_k-t+s_2. \nonumber
\end{align}
\end{itemize}
We proceed by recurrence over $k$. 

For $k=1$ \eqref{AI} and \eqref{BI} have been already proved in \eqref{primostepA} and \eqref{primostepB}. 

We assume now that \eqref{AI} and \eqref{BI} hold for a given $k\in\Nat$ and we prove that they hold for $k+1$.

Let $t\in\Bigl(s_2,\frac{(k+2)s_2-s_1}{k+1}\Bigl]$.

 If $t>s_{k+1}\iff t>\frac{(k+3)s_2-s_1}{k+2}$,  for any $t\in \Bigl(\frac{(k+3)s_2-s_1}{k+2},\frac{(k+2)s_2-s_1}{k+1}\Bigr]$ we have
\begin{align}
\int_{s_2}^{s_{k+1}}\norm{\Qup\bigl(t_{k+1}\bigr)U^{\star}\bigl(t,t_{k+1}\bigr)}^2_X\,d\si_{k+1}\leq\int_{s_2}^{t}\norm{\Qup\bigl(t_{k+1}\bigr)U^{\star}\bigl(t,t_{k+1}\bigr)}^2_X\,d\si_{k+1}\nonumber
\end{align}
and by \eqref{scambio}
\begin{align}
&\int_{s_2}^{t}\norm{\Qp{t_{k+1}}\bigr)U(t,t_{k+1})^{\star}}^2_X\,d\si_{k+1}=\int_{s_2}^{t}\norm{Q(t_{k+1})U(t,t_{k+1})^\star x}^2_{H_{t_{k+1}}}\,d\si_{k+1}=\nonumber \\
&=\int_{s_2}^{t}\norm{(U(\si_{k+1},t_{k+1})_{|_{H_{t_{k+1}}}})^\star Q(\si_{k+1})U(t,\si_{k+1})^\star x}^2_{H_{t_{k+1}}}\,d\si_{k+1}=\nonumber \\
&\leq M^2 \norm{\Qp{t,s_2}}^2_X. \nonumber 
\end{align}
Moreover
\begin{align}
\norm{\Qp{t,s_1} x}_X^2&\leq \bigl[1+ (k+2)M^2 \bigr]\norm{\Qp{t,s_2}x}^2_X,\nonumber
\end{align}
and \eqref{AI} holds for $k+1$.

Now if $t\in\Bigl(s_2,\frac{(k+3)s_2-s_1}{k+2}\Bigl]$, then
\begin{align} 
&\int_{s_2}^{s_{k+1}}\norm{\Qp{t_{k+1}}U(t,t_{k+1})^{\star}}^2_X\,d\si_{k+1}=\int_{s_2}^{t}+\int_{t}^{s_{k+1}}\norm{\Qp{t_{k+1}}U(t,t_{k+1})^{\star}}^2_X\,d\si_{k+1}\nonumber \\
&\leq \int_{s_2}^{s_{k+2}}\norm{\Qp{t_{k+2}}\bigr)U(t,t_{k+2})^{\star}}^2_X\,d\si_{k+2}+M^2 \norm{\Qp{t,s_2}}^2_X. \nonumber
\end{align}
and \eqref{BI} holds for $k+1$.

Therefore if $k\in\Nat$ and $t\in\Bigl(\frac{(k+2)s_2-s_1}{k+1},\frac{(k+1)s_2-s_1}{k}\Bigr]$,  
\begin{align}
\norm{\Qp{t,s_1} x}_X^2&\leq \bigl[1+(k+1) M^2 \bigr]\norm{\Qp{t,s_2}x}^2_X. \nonumber
\end{align}
Since $$\bigcup_{k\in\Nat}\Biggl(\frac{(k+2)s_2-s_1}{k+1},\frac{(k+1)s_2-s_1}{k}\Biggr]=(s_2,2s_2-s_1]$$ and \eqref{stimasi1} holds for $t\in(2s_2-s_1,+\infty)$, statement 3 follows.
\item For $(s,t)\in\Delta$ and $x\in X$ we use again \eqref{scambio}, to get
\begin{align}
\norm{\Qp x}_X&=\int_s^t\norm{\Qp{r}U(t,r)^\star x}^2_X\,dr=\int_s^t\norm{Q(r)U(t,r)^\star x}^2_{H_r}\,dr\nonumber\\
&=\int_s^t\norm{(U(t,r)_{|_{H_r}})^\star Q(t)x}^2_{H_r}\,dr\leq M^2 (t-s) \norm{\Qp{t}x}^2_X\nonumber.
\end{align}
By Proposition \ref{pseudo} we the statement follows.
\end{enumerate}
\end{proof}


\section{Schauder type theorems} 
In this section we assume that Hypotheses \ref{1} and \ref{3} hold and we prove maximal H\"older regularity for the mild solution of problem \eqref{bwp}, that is given \eqref{solmild}. We argue as in the papers \cite{MR4311102} and \cite{cerlun}. 
We set 
\begin{align}
u_0(s,x)&=\pst\ph(x),\ \ (s,t)\in\Delta,\ x\in X \\
u_1(s,x)&=-\int_s^t\bigl(P_{s,\si}\psi(\si,\cdot)\bigr)(x)\,d\si,\ \ (s,t)\in\Delta,\ x\in X.
\end{align}

\begin{prop}\label{passozero}
For every $t\in[0,T]$ and $\psi\in C_b\bigl([0,t]\times X\bigr)$, $u_1$ is continuous and
\begin{equation}\label{stimau1}
\norm{u_1}_\infty\leq t\norm{\psi}_\infty.
\end{equation}
Moreover the following statements hold.
\begin{enumerate}
\item[(1)] Let $\te<1$. For every $n\in\Nat$ such that $\displaystyle{n<\frac{1}{\te}}$, the function $u_1$ belongs to $C^{0,n}_E\bigl([0,t]\times X\bigr)$ and there exists $C>0$, independent of $\psi$ and $t$, such that
\begin{equation}\label{stimau1inf}
\norm{u_1}_{C^{0,n}_E([0,t]\times X)}\leq C \norm{\psi}_\infty.
\end{equation}
\item[(2)] Let $\al\in(0,1)$ be such that $\displaystyle{\al+\frac{1}{\te}>1}$. For every $\psi\in C^\al_E(X)$ and for every $n\in\Nat$ such that $\displaystyle{n<\al+\frac{1}{\te}}$, the function $u_1$ belongs to $C^{0,n}_E\bigl([0,t]\times X\bigr)$ and there exists $C>0$, independent of $\psi$ and $t$, such that
\begin{equation}\label{stimau1al}
\norm{u_1}_{C^{0,n}_E([0,t]\times X)}\leq C\norm{\psi}_{C^{0,\al}_E([0,t]\times X)}.
\end{equation}
\end{enumerate}
\end{prop}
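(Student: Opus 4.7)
The plan is to reduce both statements to the pointwise smoothing estimates of Corollary \ref{blowup} and then differentiate under the integral sign by dominated convergence, the singularity at $\sigma=s$ being tamed by the hypotheses on $n$.

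For the preliminary sup-norm bound I would use that each $P_{s,\sigma}$ is a contraction on $C_b(X)$ (integration against a probability measure), so $|P_{s,\sigma}\psi(\sigma,\cdot)(x)|\le\|\psi\|_\infty$, and integration over $(s,t)$ gives \eqref{stimau1}. Continuity of $u_1$ on $[0,t]\times X$ would then follow from the joint continuity of $(s,\sigma,x)\mapsto P_{s,\sigma}\psi(\sigma,\cdot)(x)$ (Lemma \ref{regck2}, cf.\ Lemma~2.3 of \cite{cerlun}), together with dominated convergence and a standard splitting of the integral to handle the variable lower endpoint.

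For part (1), fix $s\in[0,t)$ and $j\in\{1,\dots,n\}$. Applying \eqref{kn} to $x\mapsto\psi(\sigma,x)\in C_b(X)$ gives
\begin{equation*}
\sup_{x\in X}\bigl\|D^j_E\bigl(P_{s,\sigma}\psi(\sigma,\cdot)\bigr)(x)\bigr\|_{\op^j(E)}\le \frac{K_j}{(\sigma-s)^{j\te}}\|\psi\|_\infty.
\end{equation*}
Since $n<1/\te$ forces $j\te<1$ for every $j\le n$, the right-hand side is integrable on $(s,t)$, which legitimizes iteratively differentiating under the integral sign (the proof is the dominated-convergence argument of Step~2 in Theorem \ref{regcm}) and yields
\begin{equation*}
D^j_E u_1(s,x)(h_1,\dots,h_j)=-\int_s^t D^j_E\bigl(P_{s,\sigma}\psi(\sigma,\cdot)\bigr)(x)(h_1,\dots,h_j)\,d\sigma.
\end{equation*}
Taking sup norms and integrating the singular factor then produces \eqref{stimau1inf}. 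Part (2) is essentially identical, except that one invokes \eqref{kal} in place of \eqref{kn}:
\begin{equation*}
\sup_{x\in X}\bigl\|D^j_E\bigl(P_{s,\sigma}\psi(\sigma,\cdot)\bigr)(x)\bigr\|_{\op^j(E)}\le \frac{K_{j,\al}}{(\sigma-s)^{(j-\al)\te}}\|\psi(\sigma,\cdot)\|_{C^\al_E(X)};
\end{equation*}
the hypothesis $n<\al+1/\te$ gives $(j-\al)\te<1$ for $j\le n$, so the same steps deliver \eqref{stimau1al}.

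The main technical obstacle will be the joint continuity of $D^j_E u_1$ in $(s,x)$. Since $\sigma=s$ is at once the lower integration limit and the point at which the smoothing bound blows up, dominated convergence cannot be applied directly to the whole integrand. I would circumvent this by splitting the integral at $s+\delta$: on $(s+\delta,t)$ the integrand is continuous and bounded uniformly, so continuity of that piece is routine (using the continuity of $D^j_E(P_{s,\sigma}\psi(\sigma,\cdot))(x)$ in $(s,\sigma,x)$ from the corollary that follows Corollary \ref{reg}), while the contribution of $(s,s+\delta)$ is uniformly $O(\delta^{1-j\te})$, respectively $O(\delta^{1-(j-\al)\te})$ in part (2), which vanishes as $\delta\to 0^+$.
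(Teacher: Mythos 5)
Your proposal is correct and follows essentially the same route as the paper: differentiate under the integral sign using the smoothing estimates \eqref{kn} and \eqref{kal} of Corollary \ref{blowup}, with the conditions $n<\frac{1}{\te}$ (resp.\ $n<\al+\frac{1}{\te}$) guaranteeing integrability of the singularity at $\si=s$, and then obtain continuity of the derivatives by isolating a small interval near the singular endpoint and applying dominated convergence on the rest. If anything, your systematic use of the singular-but-integrable bounds is slightly cleaner than the paper's continuity argument, which at one point invokes a bound involving $\sup_x\norm{D^k_E\psi}$ that presupposes more regularity of $\psi$ than the statement assumes.
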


\begin{proof}
Estimate \eqref{stimau1} is obvious and the proof that u is continuous is the same as \cite[Prop.\, 3.2]{cerlun}.
%
%

Concerning statements (1) and (2), due to Corollary \ref{blowup} we can differentiate in the integral that defines $u_1$ to obtain  
\begin{equation}
\frac{\de u_1}{\de h_1,...,\de h_k}(s,x)=\int_s^t D^k P_{s,\si}\psi(\si,\cdot)(x)(h_1,...,h_k)\, d\si,\ \ s\in[0,t),\ x\in X,
\end{equation}
for every $h_1,...,h_k\in E$ and $k\in \{1,...,n\}$. We prove now that the mapping 
\begin{equation}\label{derivateu1}
(s,x)\in[0,t)\times X\longmapsto D^k_Eu_1(s,\cdot)(x)(h_1,...,h_k)\in\R,
\end{equation}
 is continuous for every $h_1,...,h_k\in E$ and $k\in \{1,...,n\}$.  
 
 For $0\leq s_0\leq s<t\leq T$ and $x,x_0\in X$, we write
\begin{align} \label{tanteh}
\abs{\frac{\de u_1}{\de h_1,...,\de h_k}(s,x)-\frac{\de u_1}{\de h_1,...,\de h_k}(s_0,x_0)}=\int_{s_0}^s \abs{D^k_E P_{s_0,\si}\psi(\si,\cdot)(x_0)(h_1,...,h_k)}\, d\si \\
+\int_{s_0}^t\ca{[s,t]}(\si) \abs{D^k_E P_{s,\si}\psi(\si,\cdot)(x)(h_1,...,h_k)-D^k _E P_{s_0,\si}\psi(\si,\cdot)(x)(h_1,...,h_k)}\, d\si. \nonumber
\end{align}
Since for every $\si\geq s_0$ the mapping $$(s,x)\in[0,T]\times X\longmapsto \ca{[s,t]}(\si)D^k_E P_{s,\si}\psi(\si,\cdot)(x)(h_1,...,h_k)\in\R,$$ is continuous by Lemma \ref{regck2} and by Lemma \ref{regck} and $$\abs{D^k_E P_{s,\si}\psi(\si,\cdot)(x)(h_1,...,h_k)-D^k_EP_{s_0,\si}\psi(\si,\cdot)(x_0)(h_1,...,h_k)}\leq2 M^k\sup_{x\in X}\norm{D^k_E \psi}_{\op^k{\ets}} \prod_{i=1}^k \norm{h_i}_E,$$
by the Dominated Convergence Theorem the first integral in \ref{tanteh} vanishes as $s\rightarrow s_0^+$ and $x\rightarrow x_0$. 
Moreover, since
$$\int_{s_0}^s\abs{D_E^kP_{s_0,\si}\psi(\si,\cdot)(x_0)(h_1,...,h_k)}\,d\si\leq M^k\norm{D_E^k\psi}_\infty\prod_{i=1}^k\norm{h_i}_E (s-s_0),$$even the second integral in \ref{tanteh} vanishes as $s\rightarrow s_0^+$ and $x\rightarrow x_0$.

 If $s<s_0$ we split 
 \begin{align}
 &\abs{\frac{\de u_1}{\de h_1,...,\de h_k}(s,x)-\frac{\de u_1}{\de h_1,...,\de h_k}(s_0,x_0)}\leq \int^{s_0}_s\abs{D_E^kP_{s_0,\si}\psi(\si,\cdot)(x_0)(h_1,...,h_k)}\,d\si\nonumber\\
 &+\int_{s_0}^t \abs{D_E^kP_{s,\si}\psi(\si,\cdot)(x)(h_1,...,h_k)-D_E^kP_{s_0,\si}\psi(\si,\cdot)(x_0)(h_1,...,h_k)}\,d\si\nonumber
 \end{align}
  and we proceed as before to show that $\displaystyle{\frac{\de u_1}{\de h_1,...,\de h_k}}$ is continuous.

\end{proof}

\begin{thm}\label{schau1}
Assume that Hypotheses \ref{1} and \ref{3} hold.  Let $\ph\in C_b(X)$, $\psi\in C_b\bigl([0,t]\times X\bigr)$ and let $u$ be defined by \eqref{solmild}. 
\begin{enumerate}
\item[(1)] If  $\displaystyle{\frac{1}{\te}\notin\Nat}$, $\ph\in C^{\frac{1}{\te}}_E(X)$ and $\psi\in C_b\bigl([0,t]\times X\bigr)$, then $u\in C_E^{0,\frac{1}{\te}}\bigl([0,t]\times X\bigr)$. Moreover there exists $C=C(T)>0$, independent of $\ph$ and $\psi$, such that 
\begin{equation}
\norm{u}_{C_E^{0,\frac{1}{\te}}([0,t]\times X)}\leq C\bigl(\norm{\ph}_{C^{\frac{1}{\te}}_E(X)}+\norm{\psi}_{\infty}\bigr).
\end{equation}
\item[(2)] If  $\al\in(0,1)$, $\displaystyle{\al+\frac{1}{\te}\notin\Nat}$, $\ph\in C^{\al+\frac{1}{\te}}_E(X)$ and $\psi\in C^{0,\al}_E\bigl([0,t]\times X\bigr)$, then $u\in C_E^{0,\al+\frac{1}{\te}}\bigl([0,t]\times X\bigr)$. Moreover there exists $C=C(T,\al)>0$, independent of $\ph$ and $\psi$, such that 
\begin{equation}
\norm{u}_{C_E^{0,\al+\frac{1}{\te}}([0,t]\times X)}\leq C\bigl(\norm{\ph}_{C^{\al+\frac{1}{\te}}_E(X)}+\norm{\psi}_{C_E^{0,\al}([0,t]\times X)}\bigr).
\end{equation}
\end{enumerate}
\end{thm}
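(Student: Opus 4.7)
The plan is to decompose $u = u_0 + u_1$ and treat each contribution separately. For $u_0(s,x) = P_{s,t}\varphi(x)$, Corollary \ref{u0} applies directly: in case (2) it gives $u_0 \in C^{0, \alpha + 1/\theta}_E([0,t] \times X)$ with the required bound by $\norm{\varphi}_{C^{\alpha+1/\theta}_E(X)}$, and in case (1) the same statement holds with $\alpha = 0$. The burden therefore lies on $u_1$; set $n := \lfloor \alpha + 1/\theta \rfloor$ and $\beta := \alpha + 1/\theta - n \in (0,1)$. Proposition \ref{passozero} already delivers $u_1 \in C^{0,n}_E([0,t] \times X)$ controlled by $\norm{\psi}_{C^{0,\alpha}_E}$ (or by $\norm{\psi}_\infty$ in case (1)), so what remains is to establish that $D^n_E u_1(s, \cdot)$ is $\beta$-H\"older along $E$, uniformly in $s \in [0,t)$.

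Fix $s$, $x \in X$ and $h \in E \setminus \{0\}$. Differentiating under the integral as justified in Proposition \ref{passozero} I write
\[
D^n_E u_1(s, x+h) - D^n_E u_1(s, x) = -\int_s^t \bigl[ D^n_E P_{s,\sigma} \psi(\sigma, \cdot)(x+h) - D^n_E P_{s,\sigma} \psi(\sigma, \cdot)(x) \bigr] \, d\sigma,
\]
and split the integral at $s + \tau$ with $\tau := \min(\norm{h}_E^{1/\theta}, t-s)$. On $[s, s+\tau]$ I bound the integrand by twice its sup norm and invoke the estimate from Corollary \ref{blowup} (or equivalently Corollary \ref{reg}),
\[
\norm{D^n_E P_{s,\sigma} \psi(\sigma, \cdot)}_{\op^n(E)} \leq \frac{K_{n,\alpha}}{(\sigma - s)^{(n-\alpha)\theta}} \, \norm{\psi(\sigma, \cdot)}_{C^\alpha_E(X)};
\]
since $n < \alpha + 1/\theta$ implies $(n-\alpha)\theta < 1$, integration produces a contribution of order $\tau^{1 - (n-\alpha)\theta} = \norm{h}_E^\beta$. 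On $[s+\tau, t]$ I use the mean value inequality along $E$ combined with the analogous bound at order $n+1$,
\[
\norm{D^{n+1}_E P_{s,\sigma} \psi(\sigma, \cdot)}_{\op^{n+1}(E)} \leq \frac{K_{n+1, \alpha}}{(\sigma - s)^{(n+1-\alpha)\theta}} \, \norm{\psi(\sigma, \cdot)}_{C^\alpha_E(X)},
\]
to estimate the integrand by $\norm{h}_E$ times this quantity; the hypothesis $\alpha + 1/\theta \notin \mathbb{N}$ gives $(n+1-\alpha)\theta > 1$, so the integral from $\tau$ to $t-s$ yields $\norm{h}_E \cdot \tau^{1 - (n+1-\alpha)\theta} = \norm{h}_E^\beta$. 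Summing, $[D^n_E u_1(s, \cdot)]_{C^\beta_E(X)}$ is uniformly bounded in $s$ by a multiple of $\norm{\psi}_{C^{0,\alpha}_E([0,t] \times X)}$. Case (1) is the specialisation $\alpha = 0$, using the first bound of Corollary \ref{blowup}.

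The main obstacle is the bookkeeping of the two exponents in the splitting argument, which succeeds precisely because $\alpha + 1/\theta \notin \mathbb{N}$ produces the strict inequalities $(n-\alpha)\theta < 1 < (n+1-\alpha)\theta$ needed for convergence at $0$ and at $\infty$ respectively (if either exponent hit the threshold $1$, only Zygmund-type regularity would survive, matching the remark made in the introduction). Continuity of $D^n_E u_1$ on $[0,t] \times X$ follows from dominated convergence using the same integrable majorants, as in Proposition \ref{passozero}. Collecting the estimates for $u_0$ and $u_1$ yields the announced norm inequality and concludes the proof.
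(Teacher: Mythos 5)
Your proposal is correct and follows essentially the same route as the paper: reduce to $\ph\equiv 0$ via Corollary \ref{u0}, get $u_1\in C^{0,n}_E$ from Proposition \ref{passozero}, then split the integral defining $D^n_E u_1$ at $s+\norm{h}_E^{1/\te}$ (truncated at $t$), bounding the near piece by the sup of the integrand via \eqref{kal} and the far piece by the mean value inequality at order $n+1$, with the exponents $(n-\al)\te<1<(n+1-\al)\te$ playing exactly the role you identify. The bookkeeping of the resulting powers of $\norm{h}_E$ matches the paper's computation.
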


\begin{proof}
We can assume that $\ph\equiv 0$ since, by Corollary \ref{u0}, we know that for every  $\g>0$ $u_0$ belongs to $C^\g_E([0,t]\times X)$, if $\ph\in C^\g_E(X)$; moreover there exists $C=C(\g, T)>0$ such that
\begin{equation}
\norm{u_0}_{C^{0,\g}_E([0,t]\times X)}\leq C\norm{\ph}_{C^\g_E(X)}.
\end{equation}
Since this proof is similar to \cite[Thm.\,3.3]{cerlun}, we only prove statement (2).
Setting $\displaystyle{n:=\biggl[\al+\frac{1}{\te}\biggr]}$, we have  $$(n-\al)\te\in(0,1),\ \ (n+1-\al)\te>1.$$ We already know that $u_1\in C^{0,n}_E(X)$ by Proposition \ref{passozero} (2) and we have to show that $u_1(s,\cdot)\in C^{\al+\frac{1}{\te}}_E(X)$ for every $s\in[0,t]$.

If $n>0$, we have to prove that $D^n_Eu_1(s,\cdot)$ is $\displaystyle{\biggl(\al+\frac{1}{\te}-n\biggr)}$-H\"older continuous with values in $\op^n(E)$, with H\"older constant independent of $s$. 

Let $h,h_1,...h_n\in E$. We split every partial derivative $D^n_Eu_1(s,y)(h_1,...h_n)$ into $a_h(s,y)+b_h(s,y)$ where
\begin{align}\label{spezzo1}
a_h(s,y)&:=-\int_s^{(s+\norm{h}_E^{\frac{1}{\te}})\wedge t} D^n_E P_{s,\si}\psi(\si,\cdot)(y)(h_1,...,h_n)\,d\si,\ \ s\in[0,t],\ y\in X,\\
b_h(s,y)&:=-\int^t_{(s+\norm{h}_E^{\frac{1}{\te}})\wedge t} D^n_E P_{s,\si}\psi(\si,\cdot)(y)(h_1,...,h_n)\, d\si,\ \ s\in[0,t],\ y\in X. \label{spezzo2}
\end{align}
Due to \eqref{kal}, we obtain 
\begin{align}
&\abs{a_h(s,x+h)-a_h(s,x)}\leq\abs{a_h(s,x+h)}+\abs{a_h(s,x)}\nonumber \\
&\leq 2 K_{n,\al}\norm{\psi}_{C^{\al}_E([0,t]\times X)}\prod_{j=1}^n\norm{h_j}_E\int_s^{(s+\norm{h}_E^{\frac{1}{\te}})\wedge t} (\si-s)^{-(n-\al)\si}\,d\si.\nonumber \\
&\leq \frac{2K_{n,\al}}{1-(n-\al)\te}\norm{h}_E^{\frac{1-(n-\al)\te}{\te}}\norm{\psi}_{C^{\al}_E([0,t]\times X)}\prod_{j=1}^n\norm{h_j}_E.\nonumber
\end{align}
We observe that if $\norm{h}_E^{\frac{1}{\te}}\geq t-s$, $b_h(s,\cdot)$ vanishes. Hence, we estimate now $\abs{b_h(s,x+h)-b_h(s,x)}$ when $\norm{h}_E^{\frac{1}{\te}}< t-s$. Again, by \eqref{kal} we get
\begin{align}
&\norm{D^n_E P_{s,\si}\psi(\si,\cdot)(x+h)-D^n_E P_{s,\si}\psi(\si,\cdot)(x)}_{\op^n(E)}\leq\sup_{y\in X}\norm{D^{n+1}_E P_{s,\si}\psi(\si,\cdot)(y)}_{\op^n(E)}\norm{h}_E\nonumber\\
&\leq\frac{K_{n+1,\al}}{(\si-s)^{(n+1-\al)\te}}\norm{\psi}_{C^{\al}_E([0,t]\times X)}\norm{h}_E,\ \si\in(s,t)
\end{align}
and
{\small \begin{align}
&\abs{b_h(s,x+h)-b_h(s,x)}\leq  K_{n+1,\al}\norm{\psi}_{C^{\al}_E([0,t]\times X)}\norm{h}_E\prod_{j=1}^n\norm{h_j}_E\int^t_{(s+\norm{h}_E^{\frac{1}{\te}})\wedge t} (\si-s)^{-(n+1-\al)\te}\,d\si.\nonumber \\
&\leq \frac{K_{n+1,\al}}{(n+1-\al)\te-1}\norm{h}_E^{\frac{1}{\te}+\al-n}\norm{\psi}_{C^{\al}_E([0,t]\times X)}\prod_{j=1}^n\norm{h_j}_E.\nonumber
\end{align}}
Summing up we get $$\abs{\bigl(D^n_Eu_1(s,x+h)-D^n_Eu_1(s,x)\bigr)(h_1,...,h_n)}\leq C\norm{h}^{\frac{1}{\te}+\al-n}_E\norm{\psi}_{C^{\al}_E([0,t]\times X)}\prod_{j=1}^n\norm{h_j}_E$$ with $$C=\frac{2K_{n,\al}}{1-(n-\al)\te}+\frac{K_{n+1,\al}}{(n+1-\al)\te-1}.$$
Therefore, $$[D^n_Eu_1(s,\cdot)]_{C^{\frac{1}{\te}+\al-n}_E(X;\op^n(E))}\leq C\norm{\psi}_{C^{\al}_E([0,t]\times X)}.$$
The case $n=0$ is analogous to the previous one where instead of \eqref{kal} we use \eqref{stimau1}.

\end{proof}

\begin{thm}\label{schau2}
Assume that Hypotheses \ref{1} and \ref{3} hold. Let $\ph\in C_b(X)$, $\psi\in C_b\bigl([0,t]\times X\bigr)$ and let $u$ be defined by \eqref{solmild}.
\begin{enumerate}
\item[(1)] If  $\displaystyle{\frac{1}{\te}=k\in\Nat}$ and if $\ph\in Z^k_E(X)$, then $u\in Z_E^{0,k}\bigl([0,t]\times X\bigr)$. Moreover there exists $C=C(T)>0$, independent of $\ph$ and $\psi$, such that 
\begin{equation}
\norm{u}_{Z_E^{0,k}([0,t]\times X)}\leq C\bigl(\norm{\ph}_{Z^k_E(X)}+\norm{\psi}_{\infty}\bigr).
\end{equation}
\item[(2)] If  $\al\in(0,1)$ and $\displaystyle{\al+\frac{1}{\te}=k\in\Nat}$ and if $\ph\in Z^k_E(X)$ and $\psi\in C^{0,\al}_E\bigl([0,t]\times X\bigr)$, then $u\in Z_E^{0,k}\bigl([0,t]\times X\bigr)$. Moreover there exists $C=C(T,\al)>0$, independent of $\ph$ and $\psi$, such that 
\begin{equation}
\norm{u}_{Z_E^{0,k}([0,t]\times X)}\leq C\bigl(\norm{\ph}_{Z^{k}_E(X)}+\norm{\psi}_{C_E^{0,\al}([0,t]\times X)}\bigr).
\end{equation}
\end{enumerate}
\end{thm}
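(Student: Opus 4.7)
The plan is to mimic the splitting argument of Theorem~\ref{schau1}, paying attention to the borderline case $\alpha+\tfrac{1}{\theta}=k\in\mathbb{N}$ where Hölder regularity degenerates into Zygmund regularity. By Corollary~\ref{u0}, $\varphi\in Z^k_E(X)$ implies $u_0\in Z^{0,k}_E([0,t]\times X)$ with the estimate $\norm{u_0}_{Z^{0,k}_E}\leq C\norm{\varphi}_{Z^k_E(X)}$, so the contribution from $u_0$ is harmless and we may reduce to $\varphi\equiv 0$ and focus on $u_1$. Proposition~\ref{passozero}, applied with $n=k-1<\alpha+\tfrac{1}{\theta}$, gives $u_1\in C^{0,k-1}_E([0,t]\times X)$ together with the explicit formula
\begin{equation*}
\frac{\de^{k-1} u_1}{\de h_1\cdots\de h_{k-1}}(s,x)=-\int_s^t D^{k-1}_E P_{s,\sigma}\psi(\sigma,\cdot)(x)(h_1,\ldots,h_{k-1})\,d\sigma .
\end{equation*}
What remains is to verify that $D^{k-1}_Eu_1(s,\cdot)\in Z^1_E(X;\op^{k-1}(E))$ uniformly in $s\in[0,t]$.

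Fix $x\in X$ and $h,h_1,\ldots,h_{k-1}\in E$. Following the strategy of Theorem~\ref{schau1}, split the $\sigma$-integral at the threshold $\sigma^*=(s+\norm{h}_E^{1/\theta})\wedge t$ and write $D^{k-1}_Eu_1(s,\cdot)(h_1,\ldots,h_{k-1})=a_h(s,\cdot)+b_h(s,\cdot)$ with $a_h$ coming from $[s,\sigma^*]$ and $b_h$ from $[\sigma^*,t]$. The key identity, following from $\alpha+\tfrac{1}{\theta}=k$, is
\begin{equation*}
(k-1-\alpha)\theta=1-\theta,\qquad (k+1-\alpha)\theta=1+\theta,
\end{equation*}
and the analogous identities $(k-1)\theta=1-\theta$, $(k+1)\theta=1+\theta$ for part (1). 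On the near piece $a_h$ I apply the triangle inequality to the second difference, losing a factor $4$:
\begin{equation*}
\abs{a_h(s,x+2h)-2a_h(s,x+h)+a_h(s,x)}\leq 4\int_s^{\sigma^*}\norm{D^{k-1}_EP_{s,\sigma}\psi(\sigma,\cdot)}_{\op^{k-1}(E)}\prod_{i=1}^{k-1}\norm{h_i}_E\,d\sigma,
\end{equation*}
and use estimate \eqref{kal} (respectively \eqref{kn} for part (1)). Integrating $(\sigma-s)^{-(1-\theta)}$ over $[s,s+\norm{h}_E^{1/\theta}]$ produces the power $\norm{h}_E^{\theta/\theta}=\norm{h}_E$, giving a bound $C\norm{h}_E\prod\norm{h_i}_E\norm{\psi}_{C^{0,\alpha}_E}$ (or $C\norm{h}_E\prod\norm{h_i}_E\norm{\psi}_\infty$).

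On the far piece $b_h$, exploit Zygmund regularity via the second-order mean-value identity
\begin{equation*}
F(x+2h)-2F(x+h)+F(x)=\int_0^1\int_0^1 D^2 F(x+(t_1+t_2)h)(h,h)\,dt_1\,dt_2,
\end{equation*}
applied to $F=D^{k-1}_E P_{s,\sigma}\psi(\sigma,\cdot)$ taking values in $\op^{k-1}(E)$, whose second $E$-Fréchet derivative is $D^{k+1}_E P_{s,\sigma}\psi(\sigma,\cdot)$. This gains a factor $\norm{h}_E^2$ at the price of an extra derivative, and by \eqref{kal} (or \eqref{kn}) we get
\begin{equation*}
\abs{b_h(s,x+2h)-2b_h(s,x+h)+b_h(s,x)}\leq C\norm{h}_E^2\prod_{i=1}^{k-1}\norm{h_i}_E\int_{\sigma^*}^t (\sigma-s)^{-(1+\theta)}\,d\sigma\,\norm{\psi}_{C^{0,\alpha}_E},
\end{equation*}
and the singular integral contributes $(\norm{h}_E^{1/\theta})^{-\theta}/\theta=\norm{h}_E^{-1}/\theta$, leaving the net power $\norm{h}_E$. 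Combining the two pieces yields the Zygmund seminorm bound
\begin{equation*}
[D^{k-1}_E u_1(s,\cdot)]_{Z^1_E(X;\op^{k-1}(E))}\leq C\bigl(\norm{\psi}_{C^{0,\alpha}_E([0,t]\times X)}+\norm{\psi}_\infty\bigr),
\end{equation*}
uniformly in $s$, and together with the $C^{0,k-1}_E$-bound from Proposition~\ref{passozero} this is the desired estimate in $Z^{0,k}_E$.

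The main obstacle is the bookkeeping of the critical exponents: the whole argument hinges on the two identities $(k-1-\alpha)\theta=1-\theta$ and $(k+1-\alpha)\theta=1+\theta$, which ensure that the ``near'' integral is integrable at $\sigma=s$ with the right power of $\norm{h}_E$ and that the ``far'' integral, although divergent, is tamed by the quadratic factor coming from the second-order Taylor expansion. A minor secondary point is justifying differentiation under the integral at order $k+1$, which is legitimate because of the pointwise bound \eqref{kal} on compact subintervals of $[\sigma^*,t]$ and Lemma~\ref{regck2}; this is essentially the content of the argument in Proposition~\ref{passozero} applied one derivative higher. Part (1) is obtained by the very same splitting with \eqref{kn} replacing \eqref{kal} and $\norm{\psi}_\infty$ in place of $\norm{\psi}_{C^{0,\alpha}_E}$.
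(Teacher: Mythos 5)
Your proposal is correct and follows essentially the same route as the paper: reduce to $\ph\equiv 0$ via Corollary \ref{u0}, get $u_1\in C^{0,k-1}_E$ from Proposition \ref{passozero}, split the $\si$-integral at $s+\norm{h}_E^{1/\te}$ into a near piece handled by the triangle inequality with \eqref{kn}/\eqref{kal} and a far piece where the second difference is bounded by $\norm{h}_E^2\sup\norm{D^{k+1}_EP_{s,\si}\psi}$, with the critical identities $(k-1-\al)\te=1-\te$ and $(k+1-\al)\te=1+\te$ producing exactly the power $\norm{h}_E$. The only cosmetic differences are that you make the second-order mean-value identity explicit where the paper uses it implicitly, and the paper writes out part (1) while you write out part (2); both are equivalent.
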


\begin{proof}
We can assume that $\ph\equiv 0$ since, by Corollary \ref{u0}, we know that for every $k\in\Nat$ $u_0$ belongs to $Z^k_E([0,t]\times X)$, if $\ph\in Z^k_E(X)$; moreover there exists $C=C(k, T)>0$ such that
\begin{equation}
\norm{u_0}_{Z^{0,k}_E([0,t]\times X)}\leq C\norm{\ph}_{Z^k_E(X)}.
\end{equation}

Since this proof is similar to \cite[Thm.\,3.4.]{cerlun}, we only prove statement (1). In this case  we have $\psi\in C_b\bigl([s,t]\times X\bigr)$ and $\displaystyle{k=\frac{1}{\te}}$.  If $k\geq 2$, we know from Proposition \ref{passozero} that $u_1\in C^{0,k-1}_E\bigl([0,t]\times X\bigr)$. So we have to show that $[D^{k-1}_Eu_1(s,\cdot)]_{Z^1_E(X;\op^{k-1}(E))}$ is bounded by a constant independent of $s$. 

 Let $h,h_1,...h_n\in E$. We split every partial derivative $D^{k-1}_Eu_1(s,y)(h_1,...h_n)$ into the sum $a_h(s,y)+b_h(s,y)$ defined in \eqref{spezzo1} and \eqref{spezzo2}, as we did in the Theorem \ref{schau1}.
Due to \eqref{kn}, we obtain 
\begin{align}
&\abs{a_h(s,x+2h)-2a_h(s,x+h)+a_h(s,x)}\leq\abs{a_h(s,x+2h)}+2\abs{a_h(s,x+h)}+\abs{a_h(s,x)}\nonumber \\
&\leq 4 K_{k-1}\norm{\psi}_\infty\prod_{j=1}^{k-1}\norm{h_j}_E\int_s^{(s+\norm{h}_E^{\frac{1}{\te}})\wedge t} (\si-s)^{-(k-1)\si}\,d\si.\nonumber \\
&\leq \frac{4K_{k-1}}{1-(k-1)\te}\norm{h}_E^{\frac{1-(k-1)\te}{\te}}\norm{\psi}_\infty\prod_{j=1}^n\norm{h_j}_E.\nonumber\\
&=4 k K_{k-1}\norm{h}_E\norm{\psi}_\infty\prod_{j=1}^n\norm{h_j}_E. \nonumber
\end{align}
We observe that if $\norm{h}_E^{\frac{1}{\te}}\geq t-s$, $b_h(s,\cdot)$ vanishes. Hence we estimate $$\abs{b_h(s,x+2h)-2b_h(s,x+h)+b_h(s,x)}$$ when $\norm{h}_E^{\frac{1}{\te}}< t-s$. Again, by \eqref{kn} we get
\begin{align}
&\abs{b_h(s,x+2h)-2b_h(s,x+h)+b_h(s,x)}\nonumber \\
&\leq\int_{s+\norm{h}_E^{\frac{1}{\te}}}^t\Bigl|\bigl(D^{k-1}_EP_{s,\si}\psi(\si,\cdot)(x+2h)-2D^{k-1}_EP_{s,\si}\psi(\si,\cdot)(x+h)\nonumber\\
&+D^{k-1}_EP_{s,\si}\psi(\si,\cdot)(x)\bigr)(h_1,...,h_{k-1})\Bigr|\, d\si\nonumber \\
&\leq \norm{h}_E^2 \prod_{j=1}^{k-1}\norm{h_j}_E\int_{s+\norm{h}_E^{\frac{1}{\te}}}^t\sup_{y\in X}\norm{D^{k+1}_EP_{s,\si}\psi(\si,\cdot)(y)}_{\op^{k+1}}\,d\si\nonumber\\
&\leq K_{k+1}\norm{\psi}_\infty\norm{h}_E^2 \prod_{j=1}^{k-1}\norm{h_j}_E\int_{s+\norm{h}_E^{\frac{1}{\te}}}^t (\si-s)^{-(k+1)\te}\, d\si\leq  k K_{k+1}\norm{\psi}_\infty\norm{h}_E \prod_{j=1}^{k-1}\norm{h_j}_E.\nonumber
\end{align}
Summing up we get $$[D^{k-1}_Eu_1(s,\cdot)]_{Z^1_E(X;\op^{k-1}(E))}\leq k(4K_{k-1}+K_{k+1})\norm{\psi}_\infty.$$
The case $k=\te=1$ is analogous to the previous one where instead of \eqref{kn} we use \eqref{stimau1}.

\end{proof}

\section[]{Examples}

\subsection{Example 1}
 Let $T>0$ and let $A(t)$, $B(t)$ be self-adjoint operators in diagonal form with respect to the same Hilbert basis $\{e_k:\ k\in\Nat\}$, namely $$A(t)e_k=a_k(t)e_k,\ \ B(t)e_k=b_k(t)e_k\ \ 0\leq t\leq T,\ \ k\in\Nat,$$ with continuous coefficients $a_k$, $b_k$. We set $$\mu_k=\min_{t\in [0,T]}a_k(t),\ \ \la_k=\max_{t\in [0,T]}a_k(t)$$ and we assume that there exists $\la_0>0$ such that $$\la_k\leq\la_0,\ \ \forall\ k\in\Nat.$$
In this setting  the operator $\uts$ is defined by $$U(t,s)e_k=\exp\biggl(\int_s^ta_k(\tau)\,d\tau\biggr)e_k,\ \ (s,t)\in\Delta,\ \ k\in\Nat$$is the strongly continuous evolution operator associated to the family $\{A(t)\}_{t\in[0,T]}$.
Moreover we assume that there exists $K>0$ such that $$\abs{b_k(t)}\leq K,\ \ t\in[0,T],\ k\in\Nat.$$
Hence $B(t)\in\op(X)$ for all $t\in[0,T]$, the function $B:[0,T]\longmapsto\op(X)$ is continuous and $$\sup_{t\in[0,T]}\norm{B(t)}_{\op(X)}\leq K.$$
The operators $Q(t,s)$ are given by
\begin{equation}
Q(t,s)e_k=\int_s^t\exp\biggl(2\int_\si^ta_k(\tau)\,d\tau\biggr)(b_k(\si))^2\,d\si\,e_k=:q_k(t,s)e_k,\ \ (s,t)\in\Delta,\ k\in\Nat. \nonumber
\end{equation}
Hypothesis \ref{1} is fullfilled if 
\begin{equation}\label{qk1}
\sum_{k=0}^\infty q_k(t,s)<+\infty,\ \ (s,t)\in\Delta.
\end{equation}
We give now a sufficient condition for \eqref{qk1} to hold.
We assume that $\la_k$ is eventually nonzero (say for $k\geq k_0$). Given $(s,t)\in\Delta$, we have
\begin{align}
&\abs{\int_s^t\exp\biggl(2\int_\si^ta_k(\tau)\,d\tau\biggr)(b_k(\si))^2\,d\si}\leq \norm{b_k}^2_{\infty}\abs{\int_s^t\exp\bigl(2\la_k(t-\si)\bigr)\,d\si}\nonumber \\
&=\frac{\norm{b_k}^2_{\infty}}{2\abs{\la_k}}\abs{1-\exp(2\la_k(t-s))}\leq\frac{\norm{b_k}^2_{\infty}}{2\abs{\la_k}}\bigl(1+\exp(2\la_0T)\bigr).
\end{align}
Hence \eqref{qk1} holds if we require 
\begin{equation}
\sum_{k=k_0}^\infty\frac{\norm{b_k}^2_{\infty}}{\abs{\la_k}}<+\infty.
\end{equation}
In \cite{cerlun} sufficient conditions for $\uts(X)\subseteq\cm_{t,s}$ were given. We look now for a condition such that there exists $(s,t)\in\Delta$ such that
\begin{equation} \label{notsf1}
U(t,s)(X)\nsubseteq \cm_{t,s}.
\end{equation}
Since $\Qp{t,s}e_k=(q_k(t,s))^{\frac{1}{2}}e_k$ for every $k\in\Nat$, \eqref{notsf1} holds if either some of the $q_k(t,s)$ vanishes (so that the measure $\Ng_{0,Q(t,s)}$ is degenerate) or if none of the $q_k(t,s)$ vanishes, but
\begin{equation} \label{bum}
\sup_{\substack{k\in\Nat}}\frac{\displaystyle{\exp\biggl(\int_s^t2a_k(\tau)\,d\tau\biggr)}}{q_k(t,s)}=+\infty.
\end{equation}
We obtain a sufficient condition for \eqref{bum} if we assume that $\la_k$ is eventually negative (say for $k\geq k_1\geq k_0$). We have  
\begin{align}
&\frac{\displaystyle{\exp\biggl(\int_s^t2a_k(\tau)\,d\tau\biggr)}}{\displaystyle{\int_s^t\exp\biggl(2\int_\si^ta_k(\tau)\,d\tau\biggr)(b_k(\si))^2\,d\si}}\geq\frac{\exp\bigl(2\mu_k(t-s)\bigr)}{\displaystyle{\norm{b_k}^2_{\infty}\int_s^t\exp\bigl(2\la_k(t-\si)\bigr)\,d\si}}\nonumber \\
&=\frac{\exp\bigl(2\mu_k(t-s)\bigr)}{\displaystyle{\norm{b_k}^2_{\infty}\frac{1}{-2\la_k}\bigl(1-\exp(2\la_k(t-s))\bigr)}}=\nonumber\\
&=\frac{2\abs{\la_k}}{\displaystyle{\norm{b_k}^2_{\infty}\bigl[\exp\bigl(-2\mu_k(t-s)\bigr)-\exp\bigl(2(\la_k-\mu_k)(t-s)\bigr)\bigr]}}. \nonumber
\end{align}
So \eqref{bum} is fullfilled if 
\begin{equation}
\sup_{\substack{k\geq k_1}}\frac{2\abs{\la_k}}{\displaystyle{\norm{b_k}^2_{\infty}\bigl(\exp\bigl(-2\mu_k(t-s)\bigr)-\exp(-2(\mu_k-\la_k)(t-s))\bigr)}}=+\infty,\ \ \mbox{for some}\ (s,t)\in\Delta.
\end{equation}
Now we want to find some conditions such that the hypotheses of Theorem \ref{HS} are satisfied. Let $(s,t)\in\Delta$, we assume that $b_k(t)$ is nonzero for all $k\in\Nat$. Hence we have $U(t,s)H_s\subseteq H_t$ if
\begin{equation}\label{primaip}
\sup_{k\in\Nat}\frac{b_k^2(s)}{b_k^2(t)}\exp\biggl(\int_s^t2a_k(\tau)\,d\tau\biggr)<+\infty.
\end{equation}  
Since 
\begin{equation}
\frac{b_k^2(s)}{b_k^2(t)}\exp\biggl(\int_s^t2a_k(\tau)\biggr)\leq \frac{b_k^2(s)}{b_k^2(t)} \exp(2\la_k T),\ \ k\in\Nat\nonumber
\end{equation}
a sufficient condition for \eqref{primaip} to hold is 
\begin{equation}
\sup_{k\in\Nat}\frac{b_k^2(s)}{b_k^2(t)}\exp(2\la_k T)<+\infty.
\end{equation}
Now we investigate when \eqref{stimaesp} holds.
We observe first that for $y=\Qp{s}x$ with $x\in H_s$, we have 
\begin{align}
&\norm{U(t,s)y}^2_{H_t}=\norm{\Qm{t}U(t,s)\Qp{s}x}_X^2=\sum_{\substack{k\in\Nat}}\biggl(\frac{\abs{b_k(s)}}{\abs{b_k(t)}}\exp\biggl(\int_s^ta_k(\tau)\biggr)\iprod{x}{e_k}\biggr)^2\nonumber\\
&=\sum_{\substack{k\in\Nat}}\biggl(\frac{\abs{b_k(s)}}{\abs{b_k(t)}}\exp\biggl(\int_s^ta_k(\tau)\biggr)\frac{\iprod{y}{e_k}}{\abs{b_k(s)}}\biggr)^2=\sum_{\substack{k\in\Nat}}\biggl(\frac{1}{\abs{b_k(t)}}\exp\biggl(\int_s^ta_k(\tau)\iprod{y}{e_k}_{H_s}\biggr)\biggr)^2. \nonumber
\end{align}

We assume that there exist $L\geq 0$ such that $\abs{b_k(t)}\geq L$ for all $k\in\Nat$.
Hence for any $k\in\Nat$, we have
\begin{align} 
\frac{1}{b_k^2(t)}\exp\biggl(\int_s^t2a_k(\tau)\,d\tau\biggr)\leq \frac{1}{L^2}\exp\bigl(2\la_0 T\bigr),
\end{align}
and
\begin{equation}
\norm{\uts}_{\op(H_s,H_t)}\leq \frac{1}{L} \exp(\la_0 T),\ \ 0\leq s < t\leq T.
\end{equation}
Therefore, by Theorem \ref{HS} there exists $M>0$ such that
\begin{align}
\norm{U(t,s)}_{\op(H_s;\cm_{t,s})}\leq \frac{M}{(t-s)^\frac{1}{2}}
\end{align}
Moreover, $H_{t_1}=H_{t_2}$ as Hilbert spaces for every $t_1,t_2\in[0,T]$.
Indeed, let $y\in H_{t_1}$, we know that
\begin{equation}
\Qm{t_1}e_k=\frac{b_k(t_2)}{b_k(t_1)}\Qm{t_2}e_k,\ \ \mbox{for any}\ k\in\Nat
\end{equation}
and
\begin{align}
\norm{y}_{H_{t_1}}\leq \frac{K}{L}\norm{y}_{H_{t_2}}.
\end{align}
So, Hypothesis \ref{3} is satisfied by $H_t$ for every $t\in[0,T]$ with $\theta=\frac{1}{2}$. Fixing $E=H_{t_0}$ with $t_0\in[0,T]$, Theorems \ref{schau1}(2) and \ref{schau2}(1) may be applied to obtain maximal H\"older regularity of the mild solution to \eqref{bwp}.

As an explicit example we can choose $a_k(t)=-k^2(t^k+1)$ and $b_k(t)=\sin(kt)+c$, $c>1$ for all $t\in[0,T]$. 

\subsection{Example 2}
Let $A(t)=a(t) I$, where $a$ is a continuous real valued map on $[0,T]$ and set $\norm{a}_\infty=a_0$. 
Hence $$U(t,s)=\exp\biggl(\int_s^ta(\tau)\,d\tau\biggr) I,\ \ (s,t)\in\Delta$$ is the strongly continuous evolution operator associated to the family $\{A(t)\}_{t\in[0,T]}$.

Let $\{B(t)\}_{t\in[0,T]}\subseteq\op(X)$ be a family of operators satisfying Hypothesis \ref{1} (2).
Since
\begin{equation}
Q(t,s)=\int_s^t \exp\biggl(2\int_\si^ta(\tau)\,d\tau\biggr)Q(\si)\,d\si\ \ (s,t)\in\Delta,\nonumber
\end{equation}
where $Q(\si)=B(\si)B(\si)^\star$, a sufficient and obvious condition for $\Tr{Q(t,s)}<+\infty$ is that $\Tr{Q(\si)}<+\infty$ for a.e. $\si\in[0,T]$ and $\si\longmapsto\Tr{Q(\si)}\in L^1(0,T)$. In this case, $\pst$ is a non autonomous generalization of  the classical Ornstein-Uhlenbeck semigroup widely used in the Malliavin Calculus. We refer to \cite{MR4011050} for Schauder theorems in such autonomous case.

Since $U(t,s)$ is a multiple of the identity, if $X$ is infinite dimensional $U(t,s)$ cannot map  $X$ into $\cm_{t,s}$ for any $(s,t)\in\Delta$ and $U(t,s)(H_s)=H_s$ for all $(s,t)\in\Delta$. We look for sufficient conditions such that the hypotheses of Theorem \ref{HS} are satisfied. 

In addition to the above assumptions on the trace of the operators $Q(\si)$, we require that for every $(s,t)\in\Delta$ there exists $C>0$ such that 
\begin{equation}\label{normbt}
\norm{B(s)}_{\op(X)}\leq C\norm{B(t)}_{\op(X)}.
\end{equation}
It follows
\begin{align}
\norm{\Qp{s}}_{\op (X)}^2=\norm{B(s)^\star}^2_{\op(X)}\leq C\norm{B(t)^\star}^2_{\op(X)}= C\norm{\Qp{t}}_{\op (X)}^2.
\end{align}
Then, by Proposition \ref{pseudo}, $H_s\subseteq H_t$ with continuous embedding and for $x\in H_s$ we have
\begin{align}
\norm{U(t,s)x}_{H_t}=\norm{\exp\biggl(\int_s^ta(\tau)\,d\tau\biggr)x}_{H_t}\leq e^{a_0T}\sqrt{C}\norm{x}_{H_s}.
\end{align}
Hence, we can take $E=H_s$ in Theorem \ref{regcm}.

We assume now that there exists $m>0$ such that 
\begin{equation}\label{normbt2}
0<m\leq\norm{B(t)}_{\op(X)}\leq K,\ \ \mbox{for any}\ t\in[0,T].
\end{equation}
In this case, for every $t_1,t_2\in[0,T]$
\begin{equation}
\norm{B(t_1)}_{\op(X)}\leq \frac{K}{m}\norm{B(t_2)}_{\op(X)}
\end{equation}
and \eqref{normbt} holds with $\displaystyle{C=\frac{K}{m}}$. Moreover $\ac_{t_1}=\ac_{t_2}$ as Hilbert spaces for every $t_1,t_2\in[0,T]$.
Hence Hypothesis \ref{3} is satisfied by choosing as $E$ any of such $\ac_t$ and we can apply Theorems \ref{schau1} and \ref{schau2}.

\subsection{Example 3}
We generalize now Example 2 of \cite{cerlun}. Let $T>0$ and let $\os\subseteq\R^d$ be a bounded open set with smooth boundary. We consider the evolution operator $U(t,s)$ in $X:=L^2(\os)$ associated to an evolution equation of parabolic type,

\begin{align}\label{parabolico}
\begin{cases}
u_t(t,x)=\oa(t)u(t,\cdot)(x),\ \ (t,x)\in(s,T)\times\os,\\
u(t,x)=0,\ \ (t,x)\in(s,T)\times\de\os,\\
\mathcal{B}(t)u(t,\cdot)(x)=0,\ \ (t,x)\in(s,T)\times\de\os.\\
\end{cases}
\end{align}
The differential operators $\oa(t)$ are defined by
\begin{equation}\label{operatoreat}
\oa(t)\ph(x)=\sum_{i,j=1}^d a_{ij}(t,x)D_{ij}\ph(x)+\sum_{i=1}^d a_{i}(t,x)D_{i}\ph(x)+a_0(t,x)\ph(x), t\in[0,T],\ x\in\os
\end{equation}
and the family of the boundary operators $\{\mathcal{B}(t)\}_{t\in[0,T]}$ is either of Dirichlet, Neumann or Robin type, namely
\begin{align}
\mathcal{B}(t)u=
\begin{cases}
\begin{aligned}
&u \ \ \quad \quad\quad \quad \quad\quad\quad \quad \quad \quad\quad \quad \quad \quad \mbox{(Dirichlet)},  \\[1ex]
&\displaystyle{\sum_{i,j=1}^da_{ij}(x,t)D_i u\,\nu_j}\quad \quad\quad\quad \quad \quad\, \ \ \mbox{(Neumann)},\\[1ex] 
&\displaystyle{\sum_{i,j=1}^da_{ij}(x,t)D_i u\,\nu_j}+b_0(x,t)u\quad \quad \mbox{(Robin)},
\end{aligned}
\end{cases}
\end{align}
 where $\nu=(\nu_1,...,\nu_d)$ is the unit outer normal vector at the boundary of $\Omega$. 
 
 We make the following assumptions.

\begin{ip}\label{A}
We assume $a_{ij}=a_{ji}$ and for some $\displaystyle{\rho\in\left(\frac{1}{2},1\right)}$, $a_{ij}, b_0\in C^{\rho,1}\left([0,T]\times\overline{\os}\right)$, $a_{i},a_0\in C^{\rho,0}\left([0,T]\times\overline{\os}\right)$. Moreover, we assume that there exists $\nu>0$ such that for all $\xi\in\R^d$
\begin{equation}
\sum_{i,j=1}^da_{ij}(t,x)\xi_i\xi_j\geq \nu\abs{\xi}^2,\ \ t\in[0,T],\ x\in\os.
\end{equation}
\end{ip}
Under Hypothesis \ref{A}, for all $t\in[0,T]$, we define
\begin{align} 
D(A(t))&=\left\{u\in H^2(\os):\ \mathcal{B}(t)u=0\right\}, \\ \nonumber
A(t)u&=\oa(t)u,\ \ u\in D(A(t)). \nonumber
\end{align}
 In \cite[Thm 2.4]{MR1780769} it is proved that for all $u_0\in X$ there exists a unique weak solution $u$ of \eqref{parabolico}.
 
 Setting $U(t,s)u_0=:u(t)$, $U(t,s)$ turns to be an evolution operator in $L^2(\os)$. Moreover in \cite{MR1780769} it is shown that $U(t,s)$ has an extension (still denoted by $U(t,s)$) to the whole space $L^1(\os)$ that belongs to $\op(L^1(\os);L^\infty(\os))$ and it is represented by 
\begin{equation}
U(t,s)\ph(x)=\int_\os k(x,y,t,s)\ph(y)\,dy,\ \ \ph\in L^1(\os)
\end{equation}
where for every $0\leq s<t\leq T$, $k(\cdot,\cdot,t,s)\in L^\infty(\os\times\os)$ and there exist $M,m>0$ such that 
\begin{equation}\label{kernel}
\abs{k(x,y,t,s)}\leq\frac{M}{(t-s)^{\frac{d}{2}}} e^{-\frac{\abs{x-y}^2}{m(t-s)}},\ \ x,y\in\os,\ 0\leq s<t\leq T.
\end{equation}
Moreover in \cite[Ex.\,2.8 and Ex.\,2.9]{Schnaubelt2004} it is proven that  the family $\{A(t)\}_{t\in[0,T]}$ satisfies the assumptions of \cite{MR945820, MR934508}. Then by \cite[Thm.\,2.3]{MR945820}, $\uts\in\op\left(D(A(s);D(A(t)))\right)\cap\op(X;D(A(t)))$ with norms bounded by a constant independent of $s$ and $t$.  Hence, there exists $C>0$, independent of $s$ and $t$, such that for every $s<t$ we have
\begin{align}
&\norm{\uts}_{\mathcal{L}(X)}\leq C,\label{SAT0T}\\
&\norm{\uts}_{\op(X;D(A(t)))}\leq \frac{C}{t-s},\label{AT1}\\
&\norm{\uts}_{\op(D(A(s));D(A(t)))}\leq C.\label{SAT2T}
\end{align} 
By \eqref{SAT0T} and \eqref{SAT2T} and interpolation arguments, $$\uts\in\op\left((X,D(A(s))_{\be,2};(X,D(A(t))_{\be,2})\right)$$ for all $0<\be<1$ and there exists a constant $C_1>0$ such that 
\begin{equation}
\norm{\uts}_{\op\left((X,D(A(s))_{\be,2};(X,D(A(t))_{\be,2})\right)}\leq C_1.
\end{equation}
We consider first Neumann and Robin boundary conditions for $A(t)$. Due to interpolation results and \cite[Thm.\,3.5, Thm.\,4.15]{MR1115176} for all $0\leq t\leq T$ and $0<\g<2$, we have
\begin{align}\label{guidetti0}
(X,D(A(t))_{\frac{\g}{2},2}&=\begin{cases}
 H^{\g}(\os) &\mbox{if}\ 0<\g<\frac{3}{2}\\[1ex]
 \left\{u\in H^{\frac{3}{2}}(\os)\ |\ \tilde{\mathcal{B}}(t)u\in\mathring{H}^{\frac{1}{2}}(\os)\right\} &\mbox{if}\ \g=\frac{3}{2} \\[1ex]
\left\{u\in H^{\g}(\os)\ |\ \mathcal{B}(t)u=0\right\}\quad &\mbox{if}\ \frac{3}{2}<\g<1
\end{cases},
\end{align}
where
\begin{align}
\tilde{\mathcal{B}}(t)u=
\begin{cases}
\begin{aligned}
&\displaystyle{\sum_{i,j=1}^da_{ij}(x,t)D_i u\,\tilde{\nu_j}}\quad \quad\quad\quad \quad \quad\, \ \ \mbox{(Neumann)}\\[1ex] 
&\displaystyle{\sum_{i,j=1}^da_{ij}(x,t)D_i u\,\tilde{\nu_j}}+b_0(x,t)u\quad \quad \mbox{(Robin)}
\end{aligned}
\end{cases},
\end{align} 
$\tilde{\nu}$ is a smooth enough extension of $\nu$ to $\overline{\os}$ and $\mathring{H}^{\frac{1}{2}}(\os)$ consists on all the elements $\ph\in H^{\frac{1}{2}}(\os)$ whose null extension outside $\overline{\os}$ belongs to $H^{\frac{1}{2}}(\R)$.

The simplest example of the family $\{B(t)\}_{t\in[0,T]}$ is given by $B(t)=(\id-\Delta)^{-\frac{\g}{2}}$ for all $t\in[0,T]$, where $\Delta$ is the realization of the Laplace operator in $L^2(\os)$ with Neumann boundary conditions. We have
\begin{align}\label{guidetti1}
D\left((\id-\Delta)^{\frac{\g}{2}}\right)&=(L^2(\os),D(\Delta))_{\frac{\g}{2},2}\nonumber \\
&=\begin{cases}
 H^{\g}(\os) &\mbox{if}\ 0<\g<\frac{3}{2}\\[1ex]
  \left\{u\in H^{\frac{3}{2}}(\os)\ |\ \iprod{\nabla u}{\tilde{\nu}}\in\mathring{H}^{\frac{1}{2}}(\os)\right\}\ &\mbox{if}\ \g=\frac{3}{2} \\[1ex]
\left\{u\in H^{\g}(\os)\ |\ \iprod{\nabla u}{\nu}_{|_{\de\os}}=0\right\}\quad &\mbox{if}\ \frac{3}{2}<\g<2
\end{cases},
\end{align}
where $\iprod{\nabla u}{\nu}_{|_{\de\os}}$ is the trace of $\iprod{\nabla u}{\nu}$ at $\de \os$.

It follows that for every $q\geq 2$, the embedding of $D\left((\id-\Delta)^{\frac{\g}{2}}\right)$ in $L^q(\os)$ is continuous for $\g\geq d \left(\frac{1}{2}-\frac{1}{q}\right)$. Hence for such choices of $\g$, $(\id-\Delta)^{-\frac{\g}{2}}\in\op(L^2(\os);L^q(\os))$. In the same way of \cite[Lemma\,4.3]{cerlun}, we can prove that if $q>d$ $Q(t,s)$ has finite trace for all $0\leq s<t\leq T$ and, in this case, Hypothesis \ref{1} holds.

We check now that the space $E=H^\g(\os)$ with $0<\g<\frac{3}{2}$ satisfies Hypothesis \ref{3}. In fact by \eqref{guidetti0} and \eqref{guidetti1} we have $E=\left(X,D(A(t))\right)_{\frac{\g}{2},2}=\ac_t$ for all $t\in[0,T]$. Then $\uts\in\op(E,\ac_t)$ for all $0\leq s<t\leq T$ with norm bounded by a constant independent of $s$ and $t$ and \eqref{stimaesp2} holds with $\al=0$. By Proposition \ref{cute}, Hypothesis \ref{3} is satisfied by $E=D\left(X,D(A(t))\right)_{\frac{\g}{2},2}$ with $\theta=\frac{1}{2}$. Notice that the spaces $\ac_t$ and $\left(X,D(A(t))\right)_{\frac{\g}{2},2}$ do not coincide in general for $\g\geq \frac{3}{2}$ due to the different boundary conditions.

 Summarizing, we need $q\geq2$, $q>d$ and $\displaystyle{d\left(\frac{1}{2}-\frac{1}{q}\right)\leq\g<\frac{3}{2}}$. Namely,
\begin{align}
\begin{cases}
q\geq 2,\ \ \ \ \ \ \ \frac{q-2}{2q}\leq\g<\frac{3}{2}\ \ \mbox{if}\ d=1\\[1ex]
q>2,\ \ \ \ \ \ \frac{q-2}{q}\leq\g<\frac{3}{2}\ \ \ \mbox{if}\ d=2\\[1ex]
q>3,\ \ \ \ \ \ 3\frac{q-2}{2q}\leq\g<\frac{3}{2}\ \ \mbox{if}\ d=3\\[1ex]
4<q<8,\ 2\frac{q-2}{q}\leq\g<\frac{3}{2}\ \ \mbox{if}\ d=4
\end{cases}.
\end{align}
So we can take
\begin{align} \label{sceltadgamma}
\begin{cases}
0\leq\g<\frac{3}{2}\ \ \ \ \mbox{if}\ d=1\\[1ex]
0<\g<\frac{3}{2}\ \ \ \ \mbox{if}\ d=2\\[1ex]
 \frac{1}{2}<\g<\frac{3}{2}\ \ \ \, \mbox{if}\ d=3\\[1ex]
1<\g<\frac{3}{2}\ \ \ \ \mbox{if}\ d=4
\end{cases}.
\end{align}

With the same choices of $\g$ and $d$, we can also take $E=L^2(\os)$. Indeed, since $U(t,s)\in \op\bigl(L^2(\os)\bigr)\cap\op\bigl(L^2(\os);D(A(t))\bigr)$, then by interpolation $\uts\in\op\left(L^2(\os),H^\g(\os)\right)$ if $\g<\frac{3}{2}$. Moreover there exists $C_2>0$ such that
\begin{equation}
\norm{\uts}_{\op\left(L^2(\os),H^{\g}(\os)\right)}\leq \frac{C_2}{(t-s)^{\frac{\g}{2}}}.
\end{equation}
Hence, choosing $E=L^2(\os)$ and noting that $\ac_t=H^\g(\os)$ for all $t\in[0,T]$, \eqref{stimaesp2} holds for $\displaystyle{\al=\frac{\g}{2}}$ and therefore by Proposition \ref{cute}, Hypothesis \ref{3}  holds with $\displaystyle{\theta=\frac{1}{2}+\frac{\g}{2}}$.
Moreover, since for all $p>2$ $L^p(\os)\subseteq L^2(\os)$ with continuous embedding, we can choose $E=L^p(\os)$. Hence for $0<\g<1$, \eqref{stimaesp2} holds for $\displaystyle{\al=\frac{\g}{2}}$ and therefore by Proposition \ref{cute}, Hypothesis \ref{3}  holds with $\displaystyle{\theta=\frac{1}{2}+\frac{\g}{2}}$.

We consider now Dirichlet boundary conditions for $A(t)$. In this case $D(A(t))=H^2(\os)\cap H^1_0(\os)$ for all $t\in[0,T]$. 

The simplest example of the family $\{B(t)\}_{t\in[0,T]}$ is given by $B(t)=(-\Delta)^{-\frac{\g}{2}}$ for all $t\in[0,T]$ where $\Delta$ is the realization of the Laplace operator in $L^2(\os)$ with Dirichlet boundary conditions. Due to interpolation results and \cite[Thm.\,3.5, Thm\,4.15]{MR1115176} we have
\begin{align}\label{guidetti2}
D\left((-\Delta)^{\frac{\g}{2}}\right)&=\left(L^2(\os),H^2(\os)\cap H^1_0(\os)\right)_{\frac{\g}{2},2}\nonumber \\
&=\begin{cases}
 H^{\g}(\os) &\mbox{if}\ 0<\g<\frac{1}{2}\\
 \mathring{H}^{\frac{1}{2}}(\os) &\mbox{if}\ \g=\frac{1}{2} \\
\left\{u\in H^{\g}(\os)\ |\ u_{|_{\de\os}}=0\right\}\quad &\mbox{if}\ \frac{1}{2}<\g<2
\end{cases}.
\end{align}

For every $q\geq 2$, the embedding of $D\left((-\Delta)^{\frac{\g}{2}}\right)$ in $L^q(\os)$ is continuous for $\g\geq d \left(\frac{1}{2}-\frac{1}{q}\right)$. Hence for such choices of $\g$, $(-\Delta)^{-\frac{\g}{2}}\in\op(L^2(\os);L^q(\os))$. In the same way of \cite[Lemma\,4.3]{cerlun}, we can prove that if $q>d$ $Q(t,s)$ has finite trace for all $0\leq s<t\leq T$ and, in this case, Hypothesis \ref{1} holds.

We check now that the space $E=D\left((-\Delta)^{\frac{\g}{2}}\right)$ with $0<\g<2$ satisfies Hypothesis \ref{3}. In fact by \eqref{guidetti0} and \eqref{guidetti2} we have $E=D\left((-\Delta)^{\frac{\g}{2}}\right)=\ac_t$ for all $t\in[0,T]$. Then $\uts\in\op(E,\ac_t)$ for all $0\leq s<t\leq T$ with norm bounded by a constant independent of $s$ and $t$ and \eqref{stimaesp2} holds with $\al=0$. By Proposition \ref{cute}, Hypothesis \ref{3} is satisfied by $E=D\left((-\Delta)^{\frac{\g}{2}}\right)$ with $\theta=\frac{1}{2}$. 

Summarizing, we need $q\geq2$, $q>d$ and $\displaystyle{d\left(\frac{1}{2}-\frac{1}{q}\right)\leq\g<2}$. Namely,
\begin{align}
\begin{cases}
q\geq 2,\ \ \ \ \ \ \ \frac{q-2}{2q}\leq\g<2\ \ \ \  \mbox{if}\ d=1\\[1ex]
q>2,\ \ \ \ \ \ \frac{q-2}{q}\leq\g<2\ \ \ \ \ \mbox{if}\ d=2\\[1ex]
q>3,\ \ \ \ \ \ 3\frac{q-2}{2q}\leq\g<2\ \ \ \ \mbox{if}\ d=3\\[1ex]
q>4,\  \ \ \ \ \ 2\frac{q-2}{q}\leq\g<2\ \ \ \ \mbox{if}\ d=4\\[1ex]
5<q<10,\ 5\frac{q-2}{2q}\leq\g<2\ \ \mbox{if}\ d=5\\[1ex]
\end{cases}.
\end{align}
So we can take
\begin{align} \label{sceltadgamma2}
\begin{cases}
0\leq\g<2\ \ \ \ \mbox{if}\ d=1\\[1ex]
0<\g<2\ \ \ \ \mbox{if}\ d=2\\[1ex]
 \frac{1}{2}<\g<2\ \ \ \, \mbox{if}\ d=3\\[1ex]
1<\g<2\ \ \ \ \mbox{if}\ d=4\\[1ex]
\frac{3}{2}<\g<2\ \ \ \ \mbox{if}\ d=5
\end{cases}.
\end{align}

We prove now that we can take $E=L^2(\os)$ for $0<\g<2$. Indeed, since $U(t,s)\in \op\bigl(L^2(\os)\bigr)\cap\op\bigl(L^2(\os);H^2(\os)\cap H^1_0(\os)\bigr)$, then by interpolation $\uts\in\op\left(L^2(\os),H^{\g}(\os)\right)$ for all $0<\g<2$. Moreover there exists $C_3>0$ such that
\begin{equation}
\norm{\uts}_{\op\left(L^2(\os),H^{\g}(\os)\right)}\leq \frac{C_3}{(t-s)^{\frac{\g}{2}}}.
\end{equation}
Hence, choosing $E=L^2(\os)$, \eqref{stimaesp2} holds for $\displaystyle{\al=\frac{\g}{2}}$ and therefore by Proposition \ref{cute}, Hypothesis \ref{3}  holds with $\displaystyle{\theta=\frac{1}{2}+\frac{\g}{2}}$.
Moreover, since for all $p>2$ $L^p(\os)\in L^2(\os)$ with continuous embedding, we can choose $E=L^p(\os)$. Hence for $0<\g<1$, \eqref{stimaesp2} holds for $\displaystyle{\al=\frac{\g}{2}}$ and therefore by Proposition \ref{cute}, Hypothesis \ref{3}  holds with $\displaystyle{\theta=\frac{1}{2}+\frac{\g}{2}}$.

Now, we fix $d=1$. In this case $\os$ is a bounded open interval $I$ and we can choose $\g=0$, namely $B(y)=Id$ for all $t\in[0,T]$. We take now $E=E_{\al,p}:=\left(L^2(I),H^2(I)\cap H^1_0(I)\right)_{\al,p}$ for every $0<\al<\frac{1}{2}$ and $p\in[1,+\infty)$. 

In this particular case we can characterize the Cameron Martin space $\cm_{ts}$ for every $0\leq s<t\leq T$ as the space $(L^2(I),H^2(I)\cap H^1_0(I))_{\frac{1}{2},2})=H^1_0(I)$ by \eqref{guidetti2}. The proof is in \cite{cerlun}.


Since $\uts\in\op\left(H^2(I)\cap H^1_0(I)\right)\cap\op\left(L^2(I)\right)$ with norms bounded by a constant independent of $s$, $t$ and \eqref{AT1} holds, we obtain that $\uts\in\op\left(H^1_0(I)\right)$ with norms bounded by a constant independent of $s$, $t$, $\uts\in\op\left(L^2(I);H^1_0(I)\right)$ and there exists $C_4>0$ such that 
\begin{equation}
\norm{\uts}_{\op\left(L^2(I);H^1_0(I)\right)}\leq\frac{C_4}{(t-s)^{\frac{1}{2}}}.
\end{equation}
 By reiteration, for every $0<\al<\frac{1}{2}$ and $p\in[1,+\infty)$, we get $$\left(L^2(I),H^1_0(I)\right)_{2\al,p}=\left(L^2(I),H^2(I)\cap H^1_0(I)\right)_{\al,p},$$
for every $0<\al<\frac{1}{2},\ p\in[1,+\infty)$.  

Hence, by interpolation $$U(t,s)\in\op\bigl(E_{\al,p};H^1_0(I)\bigr)$$ and there exists a constant $C_4>0$ independent of $s$ and $t$ such that
\begin{equation}
\norm{U(t,s)}_{\op(E_{\al,p};H^1_0(I))}\leq\frac{C_4}{(t-s)^{\frac{1}{2}-\al}}.
\end{equation}
Hypothesis \ref{3} is satisfied by $E_{\al,p}$ with $\theta=\frac{1}{2}-\al$.

We recall the following characterization of the spaces $E_{\al,p}$ as Besov spaces with (possibly) boundary conditions, 
\begin{equation}
E_{\al,p}=(L^q(I),W^{2,q}(I)\cap W^{1,q}_0(I))_{\al,p}=B^{2\al}_{q,p,0}(I),
\end{equation}
where
\begin{equation}
B^{2\al}_{q,p,0}(I)=
\begin{cases}
 B^{2\al}_{q,p}(I) &\mbox{if}\ \ 0<\al<\frac{1}{2q}\\[1ex]
 \mathring{B}^{\frac{1}{q}}_{p,q}(I) &\mbox{if}\ \al=\frac{1}{2q} \\[1ex]\left \{u\in B^{2\al}_{q,p}(I)\ |\ u_{|_{\de I}}=0\right\}\quad &\mbox{if}\ \frac{1}{2q}<\al<\frac{1}{2}
\end{cases},
\nonumber
\end{equation}
where $\mathring{B}^{\frac{1}{q}}_{p,q}(I)$ consists on all the elements $u\in B^{\frac{1}{q}}_{q,p}(I)$ whose null extension outside $\overline{I}$ belongs to $B^{\frac{1}{q}}_{p,q}(\R)$, and for $\al>\frac{1}{2q}$ $u_{|_{\de I}}$ means the trace of $u$ at $\de I$. See \cite[Thm.\,3.5, Thm\,4.15]{MR1115176}. We refer to \cite[Ch.\,2,4]{MR1328645} for the standard theory of the Besov spaces $B^{k}_{p,q}(\R),\ B^{k}_{p,q}(I)$.

\section*{Acknowledgement}

The author is a member of G.N.A.M.P.A. of the Italian Istituto Nazionale di Alta Matematica (INdAM) and has been partially supported by the G.N.A.M.P.A. project \textit{PROBLEMI  LINEARI  ELLITTICI  E  PARABOLICI  IN  DIMENSIONE INFINITA}.

\addcontentsline{toc}{section}{References}

\printbibliography

\end{document}